\DeclareFontFamily{U}{MnSymbolC}{}
\DeclareSymbolFont{MnSyC}{U}{MnSymbolC}{m}{n}
\DeclareFontShape{U}{MnSymbolC}{m}{n}{
    <-6>  MnSymbolC5
   <6-7>  MnSymbolC6
   <7-8>  MnSymbolC7
   <8-9>  MnSymbolC8
   <9-10> MnSymbolC9
  <10-12> MnSymbolC10
  <12->   MnSymbolC12}{}
\DeclareMathSymbol{\im}{\mathbin}{MnSyC}{'270}
\appto\appendix{\addtocontents{toc}{\protect\setcounter{tocdepth}{1}}}
\appto\listoffigures{\addtocontents{lof}{\protect\setcounter{tocdepth}{1}}}
\appto\listoftables{\addtocontents{lot}{\protect\setcounter{tocdepth}{1}}}
\theoremstyle{plain}
\newtheorem{theorem}{Theorem}[section]
\newtheorem{lemma}[theorem]{Lemma}
\newtheorem{proposition}[theorem]{Proposition}
\theoremstyle{remark}
\newtheorem{definition}[theorem]{Definition}
\newtheorem{remark}[theorem]{Remark}
\newtheorem*{example*}{Example}
\numberwithin{equation}{section}
\definecolor{carmine}{rgb}{0.59, 0.0, 0.09}
\definecolor{mediumpersianblue}{rgb}{0.0, 0.4, 0.65}
\definecolor{persianplum}{rgb}{0.44, 0.11, 0.11}
 \def\mathbi#1{\textbf{\em #1}}
\newcommand{\cC}{\mathcal{C}}
\newcommand{\cG}{\mathcal{G}}
\newcommand{\cD}{\mathcal{D}}
\newcommand{\cP}{\mathcal{P}}
\newcommand{\cF}{\mathcal{F}}
\newcommand{\cT}{\mathcal{T}}
\newcommand{\cK}{\mathcal{K}}
\newcommand{\X}{\mathcal{X}}
\newcommand{\bS}{\mathbb{S}}
\newcommand{\D}{\mathcal{D}}
\newcommand{\R}{\mathbb{R}}
\newcommand{\RR}{\mathbb{R}}
\newcommand{\NN}{\mathbb{N}}
\newcommand{\CC}{\mathbb{C}}
\newcommand{\PP}{\mathbb{P}}
\newcommand{\const}{\mathrm{const}}
\newcommand{\w}{{\,{\wedge}\;}}
\newcommand{\exd}{\textrm{d}}
\newcommand{\spn}{\operatorname{span}}
\newcommand{\tr}{\operatorname{tr}}
\newcommand{\ad}{\operatorname{ad}}
\newcommand{\Id}{\operatorname{Id}}
\newcommand{\wsf}{\textsc{Wsf }}
\begin{document}

 \author{Wojciech Kry\'nski \and Omid Makhmali}
 \address{\newline Wojciech Kry\'nski\\\newline
   Institute of Mathematics, Polish Academy of Sciences, ul.~\'Sniadeckich 8, 00-656 Warszawa, Poland\\\newline
   \textit{Email address: }{\href{mailto:krynski@impan.pl}{\texttt{krynski@impan.pl}}}\\\newline\newline
   Omid Makhmali\\\newline
   Institute of Mathematics and Statistics, Masaryk University, Kotl\'a\v rsk\'a  2, 61137 Brno, Czech Republic\\\newline
   \textit{Email address: }{\href{mailto:makhmali@math.muni.cz}{\texttt{makhmali@math.muni.cz}}}
 }

\title[]
          {The Cayley cubic and differential equations}
\date{\today}

\begin{abstract}
  We define Cayley structures  as a field of Cayley's ruled cubic surfaces  over a four dimensional manifold and motivate their study by showing their similarity to indefinite conformal structures and their link to differential equations and the theory of integrable systems. In particular, for Cayley  structures  an extension of certain notions defined for indefinite conformal structures in dimension four are introduced, e.g., half-flatness, existence of a null foliation, ultra-half-flatness, an associated pair of second order ODEs, and a dispersionless Lax pair. After solving the  equivalence problem we obtain the fundamental invariants, find the local generality of several classes of Cayley structures and give examples. 
\end{abstract}

\subjclass{53C10, (53A20, 53A30, 53B15, 53B25,  58A15,  37K10)}
\keywords{Causal geometry, conformal  geometry, path geometry, integrable systems, half-flatness, Lax pair, Cayley's ruled cubic}

\maketitle
  
\vspace{-.5 cm}

\setcounter{tocdepth}{2}
\tableofcontents

\section{Introduction}
\label{sec:introduction}

The main purpose of this article is to demonstrate a link between the theory of integrable systems and a  class of 4-dimensional causal structures which is established via an extension of the  twistor theory.  Roughly speaking, causal structures are defined as the field of light cones arising from the conformal class of a pseudo-Finsler metric. We assume an additional condition of \emph{half-flatness}  for causal structures which is a generalization  of the notion of half-flatness or self-duality for indefinite conformal structures in dimension four. We consider causal structures referred to as \emph{Cayley structures} for which the light cone in each tangent space is  the cone over the projective surface known as   Cayley's ruled cubic.  As will be explained, Cayley structures display the main features of general half-flat causal structures and moreover stand out among them  due to many interesting properties of  Cayley's ruled cubic compared to other  projective surfaces.     The content of the paper is divided into a study of  the geometry of Cayley structures, finding suitable twistorial constructions, and analyzing the corresponding dispersionless  Lax pair system. Our results, when compared with the well-known case of indefinite conformal structures,   establish a rich interplay among twistor theory, integrable systems and half-flat causal structures in dimension four.

To give some perspective, recall that Cayley's ruled cubic is a projective surface in $\PP^3$ expressed as
\begin{equation}
  \label{eq:cayley-cubic-local-coord}
  \textstyle{\frac{1}{3}(y^2)^3+y^0y^3y^3-y^1y^2y^3=0}, 
\end{equation}
where $[y^0:y^1:y^2:y^3]$ are homogeneous coordinates for $\PP^3.$  Among projective surfaces, Cayley's ruled cubic has many remarkable properties, e.g., it is a non-degenerate ruled Demoulin surface that is projectively minimal, whose generating curve can be chosen to be a twisted cubic \cite{Sasaki-Book,Sasaki-Notes}. It is also the unique non-degenerate surface whose Fubini cubic form is parallel with respect to a naturally defined connection \cite{NP-Cayley, NS-Book}.  Another significant  property  is the fact that it decomposes into an open \emph{homogeneous} projective surface, i.e., an open projective surface on which a subgroup of $\mathrm{PGL_4}(\RR)$ acts transitively, and an invariant flag described in \eqref{eq:invflag}.  It turns out that the  \emph{symmetry algebra} of the homogeneous surface is three dimensional which makes it the unique non-degenerate surface with second largest symmetry algebra. More precisely, in \cite{DDKR-HomSurf} it is shown that  the largest dimension for the symmetry algebra of  a projective surface, that is not the quadric, is three. Among these so-called \emph{submaximal} projective surfaces, Cayley's ruled cubic is the only non-degenerate surface.
It is fair to say that the properties of Cayley's ruled cubic makes it the \emph{second most desirable} projective surface after the quadric, which has motivated attempts towards finding its higher dimensional analogues \cite{BNS-affine,EE-Cayley,DV-Cayley}. Although most results on Cayley's cubic are stated in the context of  affine geometry \cite{NS-Book}, they are equally valid in the projective setting \cite{Sasaki-Book, Sasaki-Notes} which is more pertinent to this article.     

Despite all the body of work  on projective surfaces, when it comes to geometries arising from a \emph{field} of projective surfaces on four dimensional manifolds, almost all studies have been focused on  fields of quadrics, which constitute the broad subject of  conformal pseudo-Riemannian geometry.
Recall that a field of quadratic  cones  uniquely identifies  the conformal class of a pseudo-Riemannian metric for which those cones are the null cones (or the light cones). 
   The subject has a long history, going back to Cartan  \cite{Cartan-Conformal1,Cartan-Conformal2} and  Eisenhart \cite{Eisenhart}. It is  not our intention to get any further  into this subject. However, for the purposes of  this paper we highlight the important role that  conformal pseudo-Riemannian structures  play in the theory of integrable systems \cite{FK,DFK,DunKry,MW-Book}, equivalence of distributions \cite{Bryant-36,Nurowski-G2}, and twistor theory \cite{Penrose,FurtherAdvancesII}. 

  An attempt to consider a field of null cones that are not necessarily quadrics can be found in \cite{Segal-Book}, as a Finslerian generalization of conformal geometry which was referred to as a \emph{causal structure} (see Definition \ref{def:causal}). In holomorphic category, causal structures, as a subclasss of \emph{cone structures},   appear naturally on certain class of  \emph{uniruled} complex manifolds were studied in \cite{Hwang-Causal}  with some additional properties (see Remark \ref{rmk:Hwang-causal}). 
  In \cite{Omid-Sigma}, the local equivalence problem for causal structures was  solved using Cartan's method of equivalence. Moreover, in  \cite{Omid-Thesis}, it was shown that,  among four dimensional causal structures whose null cones are not the quadric, the unique causal geometry with \emph{submaximal} symmetry algebra is the \emph{flat  Cayley structure} (see Definition \ref{def:cayley-isotrivial-causal}). This suggests that geometries arising from a field of ruled Cayley cubics, which we will refer to as Cayley structures, might have many desirable properties  observed in indefinite conformal structures. In this paper we show that  half-flat Cayley structures, and more generally, half-flat causal structures,  provide an extension of half-flat conformal structures and its relation to the twistor theory and integrable systems.

 We point out  that in \cite{HS1} causal structures in dimension three were studied via their link to the contact equivalence classes of third order ODEs. Higher dimensional causal structures, also referred to as cone structures, were investigated in \cite{HS2,JS}. In \cite{Hwang-Cone} Hwang studied the local equivalence of geometries arising from a field of certain projective varieties on a uniruled complex manifold where  those varieties coincide with the \emph{variety of minimal rational tangents} (VMRT for short) of the underlying manifold (see \cite{Mok,Hwang-Survey} for a survey).  Finally, we mention that  unlike causal structures, there have been substantial number of works on  geometries arising from a field of   rational normal curves \cite{B-exotic,DT,GN-ODE,Smith,K3,FK-GL2,KM},  referred to as \emph{paraconformal structures} or $\mathrm{GL_2}(\RR)$-\emph{structures}, and Segr\'e varieties \cite{ AG-Book,Baston,BE-Paraconformal,CS-Segre,Mettler-Segre}, referred to by many names, including \emph{Segr\'e structures} and \emph{Grassmann structures}. 
 
 The content and main results of the article are as follows. 

In Section \ref{sec:V-iso-triv}, we define Cayley structures, write the first order structure equations as a $G$-structure and discuss the important notions that will be needed later, such as the ruling planes, the null cones bundle $\cC$, and the  characteristic curves foliating $\cC$ which are the natural generalization of  null geodesics in conformal pseudo-Riemannian geometry. The notion of half-flatness is defined in  Definition \ref{def:half-flatness-}  which will be the main theme of this article. This definition relies on Penrose's observation that self-duality for indefinite conformal structures can be viewed as the existence of a  3-parameter family of null surfaces in the manifold. After obtaining an $\{e\}$-structure for  half-flat Cayley structures and deriving the structure equations, we have the following theorem. 
\newtheorem*{thm0}{\bf Theorem \ref{thm:half-flatness-full-1}}
\begin{thm0}
  \textit{ Given a half-flat Cayley structure, there exists a principal $\mathrm G_\rho$-bundle $\mu:\cP\rightarrow M,$ and an   $\RR^4\rtimes \mathfrak{g}_\rho$-valued 1-form  $\varphi$ which satisfies the structure equations \eqref{eq:str-eqns-cayley} for functions $a_i$'s and $b_{ijk}$'s defined on $\cP$  satisfying \eqref{eq:relations-among-invs}. The vanishing of  $a_1$ and  $b_{123}$  implies that the Cayley structure is flat i.e. $a_i=b_{ijk}=0$ for all $i,j,k.$  Moreover,  $(\cP,M,\varphi)$ is a Cartan geometry  of type $(\RR^4\rtimes \mathrm{G}_\rho,\mathrm{G}_\rho)$  if and only if the Cayley structure is torsion-free  i.e. $a_1=\cdots=a_6=0.$ Torsion-free half-flat Cayley structures locally depend on 6 functions of 2 variables.     }
\end{thm0}

In Section \ref{sec:path-geom-manif} we will review some well-known facts about path geometries. We will make use of Definition \ref{def:2D-path-geom} to define path geometries in subsequent sections. The Cartan connection and fundamental invariants for path geometries in dimension three and two, as expressed in  \eqref{eq:path-geom-cartan-conn-3D}   and \eqref{eq:path-geom-cartan-conn-2D}, will be used in subsequent sections.

In Section \ref{sec:twist-constr} we study path geometries induced on the 3-dimensional  \emph{twistor space} of  half-flat Cayley structures and  characterize them invariantly. We use the 1-form $\varphi$ in Theorem \ref{thm:half-flatness-full-1} to define an $\mathfrak{sl}_4(\RR)$-valued Cartan connection for such path geometries and express their fundamental invariants. 
In particular, we will prove the following.
\theoremstyle{plain}
\newtheorem*{thm1b}{\bf Theorem \ref{thm1a}}
\begin{thm1b}
There is a one to one correspondence between half-flat Cayley structures  and point equivalence classes of  pairs of second order ODEs for which the torsion is of rank one and constant up to  M\"obius transformations.
\end{thm1b}
The  M\"obius transformations appearing above will be explained in Remark \ref{rmk:mobius-trans}. We shall restate the above theorem in terms of explicit differential equations  on the  jet space $J^1(\RR,\RR^2)$ in some local coordinate system.  To our knowledge Theorem \ref{thm1a} is the first instance of characterizing  systems of ODEs for which the torsion  is non-trivial. From this point of view, conditions \eqref{eq:inv-cond-ODE-1} in Theorem \ref{thm1a} shows that half-flat Cayley structures can be thought of, again,  as the \emph{second-simplest} class of causal structures,  after conformal geometry, arising from a pair of second order ODEs. This gives another incentive to study Cayley structures.

The discussion above can be summarized in the following diagram.
\begin{equation}
  \label{eq:AllFib-FINAL}
\begin{diagram}
             &&&   &&&                 &\ \ \ \tilde\cP^{15}\\
             &&&   &&&\ldTo(6,4)~{\mathrm{3D\ path\ geometry}}       &        \dTo~{\mathrm{Cayley}}         &\\
             &&&   &&&                                          &         \cP^8                 &    \\
             &&&   &&& \ldTo(6,2)~{\mathrm{Half-flat}}                              &                      \dTo&\rdTo(6,4)~{\mathrm{Torsion-free}}       \\
             \PP T\cT\supseteq&J^5&&& \lTo_\mu~{\mathrm{Space\  of\  ruling\  lines}}  &&                                  &\cC^6&\subset \PP T M                  \\
            &\dTo_\nu &&&&&                                          \ldTo(6,2)_\tau~{\mathrm{space\ of\ null\ surfaces}}&&       \rdTo(6,2)_\pi~{\mathrm{Bundle\ of\ Cayley\ cubics}}     \\
             &\cT^3&&&&&                                      &                  &&            &&&& M^4\\
\end{diagram}
\end{equation}
As will be explained in subsequent sections,  $\cC$ is the bundle of projectivized null cones whose fibers are 2-dimensional ruled Cayley surfaces. The 15-dimensional  bundle $\tilde\cP$ is the principal $P_{123}$-bundle over $\cC$ for any 4-dimensional causal structure $(M,\cC),$ where $P_{123}$ is the Borel subgroup of $\mathrm{SL}_4(\mathbb{R})$. Assuming that the cones are ruled Cayley cubics, one can reduce $\tilde\cP$ to the 8-dimensional principal bundle $\cP$ which is a $\mathbb{R}^2$-bundle over $\cC$. In the special case of torsion-free Cayley structures, $\cP$ is  a principal $\mathrm{G}_\rho$-bundle over $M$ equipped with a Cartan connection $(\RR^4\rtimes \mathrm{G}_\rho,\mathrm{G}_\rho)$. Assuming half-flatness, one can locally define the 3-dimensional twistor space $\cT.$ The pair $(J,\cT)$ is equipped with a 3-dimensional path geometry where $J$ is the 5-dimensional space of ruling lines of the bundle of ruled Cayley  cubics. The Cartan connection for this path geometry is defined on the principal $P_{12}$-bundle $(\tilde P,J)$ where $P_{12}\subset \mathrm{SL}_4(\mathbb{R})$ is the parabolic subgroup obtained by crossing the first two nodes in the Dynkin diagram of $A_{3}.$  

Lastly, we mention that the twistor incidence relation remains valid in this setting just as in the case of half-flat conformal structures. Following  \cite{Penrose}, we call two points in $M$ \emph{null separated} if they can be joined by a curve that is the projection of  characteristic curve in $\cC.$ Moreover, since the twistor space, $\cT,$ is the leaf space of the 3-parameter family of surfaces that are null with respect to the ruled Cayley cubic,  a point in $\cT$ corresponds to a null surface on $M$. Also a  point  $p\in M$ corresponds to a curve $L_p\subset \cT$ obtained from the 1-parameter family of null surfaces passing through $p$. The collection of these paths induce a 3D path geometry on $\cT$ (see Section \ref{sec:3d-path-geometry}). Furthermore, since any characteristic curve belongs to a unique null surface, $S_0\subset M$, if two points $p,q\in M$ are null separated, then their corresponding curves $L_p,L_q\subset\cT$ intersect at the point  that corresponds to the surface $S_0.$ As a result, one obtains a twistorial definition of  null separation for two points $p,q\in M$ as the intersection of  their twistor lines $L_p,L_q\subset \cT$ being nonempty. It is straightforward to see that our discussion on  null separation and twistor incidence relation remains valid for the larger class of  4-dimensional half-flat causal structures as developed in \cite{Omid-Thesis}.  
 
Section \ref{sec:addit-geom-struct} involves three different ways through which path geometry on surfaces appears in half-flat Cayley structures. In this section we will make use of  Definition \ref{def:2D-path-geom} to introduce a 2-dimensional  path geometry in terms of a triple $(S,\ell_1,\ell_2)$. First, the natural path geometry induced by the characteristic curves on each leaf of the 3-parameter family of null surfaces in a half-flat Cayley structure is considered. Proposition \ref{prop:flat-path-geom} shows that such  path geometries are flat, generalizing what is known for indefinite half-flat conformal structures \cite{LM-zollfrei,Belgun}.  The second appearance is in half-flat Cayley structures  equipped with an additional foliation by null surfaces. After  giving an invariant condition for the existence of an additional null foliation in a half-flat Cayley structure, we show that the path geometry induced by the characteristic curves on each leaf of this foliation, $(S_{\mathsf{null}},\ell_1,\ell_2),$ defines a projective structure which depends on 1 function of 2 variables. Finally, we find an invariant condition that implies the existence of a path geometry on the 2-dimensional leaf space of the aforementioned null foliation, denoted by $(S_{\mathsf{uhf}},\ell_1,\ell_2)$. We call such class of half-flat Cayley structures  \emph{ultra-half-flat} (see Remark \ref{rmk:ultra-half-flatness}) which are the analogue of  a class of indefinite half-flat conformal structures  studied in \cite{Calderbank, DW1, DW2}. The section ends with the following theorem.
\newtheorem*{thm1}{\bf Theorem \ref{thm:ultra-half-flatness}}
\begin{thm1}
  \textit{ 
 A half-flat Cayley structure satisfying $a_6=a_5=\textstyle{\frac{\partial}{\partial\omega^3}a_2+b_{203}}=0$ in \eqref{eq:str-eqns-cayley} is ultra-half-flat. Locally, such Cayley structures depend on 3 functions of 3 variables. Moreover, such ultra-half-flat Cayley structures for which the path geometries $(S_{\mathsf{null}},\ell_1,\ell_2)$ and $(S_{\mathsf{uhf}},\ell_1,\ell_2)$ are both projectively flat depend on 8 functions of 2 variables. }
\end{thm1}

In Section \ref{sec:examples} we consider a number of examples of  half-flat Cayley structures. We express certain Cayley structures  in terms of a coframe or their corresponding pair of ODEs on their twistor space. Moreover we find all  submaximal models  and introduce them explicitly. 
 
Section ~\ref{sec:disp-lax-pairs} is devoted to a further study of the half-flatness condition for Cayley  structures. Our approach  in this section is motivated by the one used  in \cite{DFK} for   conformal structures and in  \cite{KM} for  $\mathrm{GL}_2(\RR)$-structures. In  Theorems \ref{propNormalform} and \ref{thm2} we prove that there is a one to one correspondence between  half-flat Cayley structures and solutions to a system of seven PDEs for three unknown functions of four variables. Moreover, we prove that the system possesses a dispersionless Lax pair, i.e. it can be put in the Lax form $[L_0,L_1]=0,$
 where $L_0$ and $L_1$ are first order operators depending on the unknown functions and  an auxiliary spectral parameter. Furthermore, we prove the following
 \newtheorem*{thm2}{\bf Theorem \ref{thmCharVar}}
 \begin{thm2}
   \textit{  The characteristic variety for the linearization of the Lax system along a solution is the ruled Cayley cubic in the cotangent bundle of each point, whose dual variety gives a ruled Cayley cubic in each  tangent space using which one can define a Cayley structure  on the  four manifold that corresponds to the given solution of the aforementioned  system of PDEs.}
 \end{thm2}  
 This result can be compared with the result in \cite{FK} where the authors introduce conformal structures as characteristic varieties of certain second order PDEs. Let us point out that as a by-product of our results we provide a general method for studying the principal symbol of differential operators having a dispersionless Lax representations.  Finally, at the end of Section~\ref{sec:disp-lax-pairs}, we will use the Lax pair in order to give an alternative way of viewing  the 2D path geometries discussed in Section \ref{sec:addit-geom-struct}.

The EDS calculations mentioned in the text are carried out using the \texttt{Cartan} package in \texttt{Maple}. 

\addtocontents{toc}{\setcounter{tocdepth}{1}}

\subsection*{Conventions}
In this article we will be working in the real smooth category. However, the results are valid in the complex  setting with minor changes.  Since our results are of local nature, the manifolds  can be taken to be the maximal open sets over which the assumptions made in each statement is valid.

 Throughout this article, given a manifold $N$, its projectivized tangent bundle will be denoted by $\PP TN$ whose fibers are the projectivized tangent spaces, denoted by $\PP T_xN$ where $x\in N.$ 
Also $M$ stands for a four dimensional manifold equipped with a Cayley structure with local coordinate $(x^0,x^1,x^2,x^3).$ We use the coordinates $(y^0,y^1,y^2,y^3)$ as the fiber coordinates for $TM$. The 3-dimensional manifold $\cT$ is referred to as the \emph{twistor space} of the Cayley structure defined in Section \ref{sec:half-flatness-full} with coordinates $(t,z^1,z^2)$. The manifold  $J$ is  five dimensional  defined as the space of the ruling lines of the Cayley structure (see Section \ref{sec:half-flatness-full}).  In Section \ref{sec:3d-path-geometry} it is shown that  $J$ is locally equivalent to an open subset of $\PP T\cT$ that submerses to $\cT$ which can, therefore, be identified with $J^1(\RR,\RR^2),$ with  coordinates $(t,z^1,z^2,p^1,p^2).$ 

We will sometimes work with the cone over a projective hypersurface $V\subset \PP^n,$ which will be  denoted by $\widehat V\subset \RR^{n+1}.$ Moreover, the projective class of a cone over a set $S\subset \RR^{n+1}$ will be denoted by $[S]\subset \PP^n.$

Throughout the article we will make use of the summation convention over repeated indices.

When dealing with symmetric products of 1-forms, such as the bilinear form $g$ in \eqref{eq:conforma-class-g} defined in terms of   $\omega^0,\omega^1,\omega^2,\omega^3,$ by abuse of notation, we will write $g=\omega^0\omega^3-\omega^1\omega^2$ where it is understood that the tensor product is symmetric. Pfaffian systems are introduced using curly brackets, e.g., $I_{\mathsf{cong}}=\{\omega^2,\omega^3\}.$ 

Given a coframe $(\omega^0,\omega^1,\omega^2,\omega^3,\theta^1,\theta^2)$ over $\cC$ introduced in \ref{sec:cayl-isotr-flat}, the differential of a function $F$ defined on $\cC$ will be expressed as
\[\exd F=F_{;i}\omega^i+F_{;4}\theta^1+F_{;5}\theta^2,\] 
where $0\leq i\leq 3$ and the quantities $F_{;\mu},0\leq \mu\leq 5$ are called the coframe derivatives of $F$ with respect to the aforementioned coframe.

\subsection*{Acknowledgments}
We would like to thank  D. Calderbank, M. Dunajski, E. Ferapontov, B. Kruglikov,  P. Nurowski, A. Sergyeyev and  D. The for helpful conversations, suggestions and interest. The starting point of our collaboration occurred during  the Simons Semester \emph{Symmetry and Geometric Structures} at IMPAN in 2018. This work was partially supported by the grant 346300 for IMPAN from the Simons Foundation and the matching 2015-2019 Polish MNiSW fund. OM acknowledges the partial support of  the grant GA19-06357S from  the Czech Science Foundation, GA\v CR. WK acknowledges the partial support of the grant 2019/34/E/ST1/00188 from the National Science Centre, Poland.

\section{Half-flat Cayley structures}
\label{sec:V-iso-triv}

\addtocontents{toc}{\setcounter{tocdepth}{2}}

In this section, we recall the definition of a causal structure, some of its important features and define the notions of half-flatness and isotriviality, initially introduced in \cite{Omid-Thesis}. The section ends with introducing an $\{e\}$-structure for half-flat Cayley structures  which will be crucial for subsequent sections. In this section we assume $0\leq i,j,k\leq 3.$

\subsection{Cayley-isotriviality}
\label{sec:structure-equations}

Following \cite{Omid-Sigma}, a causal structure on an $(n+1)$-dimensional manifold $M$ is defined as follows.
\begin{definition}
\label{def:causal}
  A \textit{causal structure} of signature $(p+1,q+1), p+q=n-1$ on $M^{n+1}$ is given by an immersion  $\iota : \cC\rightarrow\PP TM$ where $\cC$ is a connected, smooth manifold of dimension $2n$ with the property that   the fibration $\mu=\pi\circ\iota : \cC\rightarrow M$ is a submersion whose fibers $\cC_x:=\mu^{-1}(x)$ are mapped, via the immersion $\iota_x : \cC_x\rightarrow \PP T_xM,$ to connected projective hypersurfaces  $\PP T_xM,$  whose projective second fundamental form has signature $(p,q)$.

  Two causal structures $\iota : \cC\rightarrow \PP TM$ and $\iota' : \cC'\rightarrow\PP TM'$ are locally equivalent at   $x\in M, x'\in M',$ if there exists a diffeomorphism $\phi : U\rightarrow U'$   where $x\in U\subset M$ and $x'\in U'\subset M',$ such that $x'=\phi(x)$ and  $\phi_*\left(\iota(\cC)\right)=\iota'(\cC'_{\phi(y)})$ for all $y\in U.$ 

 A causal structure $(\cC^{2n},M^{n+1})$ is  \textit{$V$-isotrivial}, where $V\subset \PP^{n},$ if  $\cC_x$ is  projectively equivalent to $V,$ as a projective hypersurface  in $\PP^n,$ for all $x\in M.$ A causal structure $(\cC,M)$ is called \textit{$V$-isotrivially flat} if it is locally equivalent to $(V\times U,U)$ for some open set $U\subset \RR^{n+1}.$
\end{definition}

\begin{remark}
  Since our consideration in this article is local, we can restrict ourselves to an open set $U\subset\cC$ which can be embedded in $\PP TM$ with the fibers $U\cap \cC_x$ being projective hypersurfaces in $\PP T_xM.$ We mention that the term $V$-isotrivial is borrowed from  the articles of Hwang and Mok on cone structures. See  \cite{Mok,Hwang-Survey} for a  survey. We also point out that classically causal structures are assumed to have closed strictly convex null cones and, therefore, by the above definition have  Lorentzian signature, $(p+1,1)$. 
\end{remark} 
From now on we will work on 4-dimensional manifolds. Using the above definition, we define Cayley structures  as follows.
\begin{definition}\label{def:cayley-isotrivial-causal}
  A \emph{Cayley structure} is a $V$-isotrivial causal structure on an 4-dimensional manifold where $V$ is projectively equivalent to the ruled Cayley cubic surface.  In this case  a  $V$-isotrivially flat causal structure is referred to as  a \emph{flat Cayley structure}. 
\end{definition}

Recall that the ruled Cayley cubic in $\PP^3$ can be expressed as
\begin{equation}
  \label{eq:flat-Cayley-L}
  \textstyle\frac{1}{3}(y^2)^3+y^0y^3y^3-y^1y^2y^3=0,
\end{equation}
where $[y^0:y^1:y^2:y^3]$ are homogeneous coordinates for $\PP^3.$ 
As a result, a Cayley structure on $M$ can be introduced as the conformal class, $[\rho],$ of the symmetric cubic form 
\begin{equation}
  \label{eq:cayely}
\rho={\textstyle\frac{1}{3}}(\omega^2)^3+\omega^0\omega^3\omega^3-\omega^1\omega^2\omega^3
\end{equation}
where $(\omega^0,\omega^1,\omega^2,\omega^3)$   are a coframe on $M.$ One can check that by Definition \ref{def:causal} Cayley structures have signature $(2,2)$ which is expected since Cayley's ruled cubic is an indefinite projective surface.   
 
Let $\mathrm G_\rho\subset \mathrm{GL_4}(\RR)$  be the group of transformations that preserves the conformal class of $\rho.$ It follows that  $\mathrm G_\rho$ can be expressed as the set of matrices 
\begin{equation}
  \label{eq:StrGroup}
    \def\arraystretch{1.15}
  \begin{pmatrix}
    \mathbf{f_0} & \mathbf{u} & \mathbf{v} & \frac{1}{\mathbf{f_0}}\mathbf{\mathbf{uv}}-\frac{1}{3\mathbf{f_0}^2}\mathbf{\mathbf{u}}^3\\
    0 & \mathbf{f_0\,f_{1}}& \mathbf{\mathbf{u}\,f_{1}}  & \mathbf{\mathbf{v}\,f_{1}}\\
    0  & 0& \mathbf{f_0\,f_{1}}^2 & \mathbf{\mathbf{u}\,f_{1}}^2\\
    0 & 0 & 0 & \mathbf{f_0\,f_{1}}^3
  \end{pmatrix}.
\end{equation}
Therefore, as a $G$-structure, the structure group of a Cayley  structure is given by $\mathrm G_\rho$ whose Lie algebra will be denoted by $\mathfrak{g}_\rho.$ Following Cartan's method of equivalence, we will find an $\{e\}$-structure  by constructing a $\mathfrak{g}_\rho$-valued 1-form expressed as
\begin{equation}
  \label{eq:StrAlgebra}
    \def\arraystretch{1.15}
\Omega=\begin{pmatrix}
  \phi_0 & \theta^2 & \theta^1 & 0\\
   0 &  \phi_0+\phi_1& \theta^2 & \theta^1\\
 0 &0 & \phi_0+2\phi_1  & \theta^2\\ 
  0 & 0 & 0 & \phi_0+3\phi_1
\end{pmatrix} 
\end{equation}
on a principal $\mathrm G_\rho$-bundle.  In order to do so, recall that the set  of \emph{adapted} coframes   $(\omega^0,\ldots,\omega^3)^T$ with respect to which  $[\rho]$ is represented as a nonzero multiple of   \eqref{eq:cayely}  is ambiguous up to an action of $\mathrm G_\rho$. As a result, the set of adapted coframes gives rise to a principal $\mathrm G_\rho$-bundle, which will be denoted by $\varsigma:\cP\rightarrow M$ with the right action  defined by
\[R_{g}( \vartheta_p)=g^{-1}\cdot \vartheta_p,\]
where $\vartheta_p\in\varsigma^{-1}(p)$ is a coframe at $p\in M,$    $g\in \mathrm G_{\rho},$  and the action on the right hand side is the ordinary matrix multiplication on  $(\omega^0,\ldots,\omega^3)^T$. 

Restricting to an open set $U\subset M$, on $\cP|_U\cong  U \times \mathrm G_\rho $ one can define a canonical  set of 1-forms  by setting
\[\underline \vartheta(p,g)=(\underline\omega^0,\ldots,\underline\omega^3)^T:= g^{-1}\cdot \vartheta_p,\]
where $\vartheta_p$ is a choice of adapted coframe  $(\omega^0,\ldots,\omega^3)^T$  at $p\in M.$ 

The exterior derivatives of the 1-forms in $\underline \vartheta$ are given by
\begin{equation}
  \label{eq:str-eqns-strart}
  \exd\underline \vartheta=\exd g^{-1} \w \vartheta+g^{-1}\cdot \exd \vartheta=-g^{-1}\cdot\exd g\cdot \w g^{-1} \vartheta+g^{-1}\cdot\exd \vartheta=-\Omega\w \underline \vartheta+ T,
\end{equation}
where $\Omega(p,g)= g^{-1}\cdot\exd g$ is the $\mathfrak g_\rho$-valued 1-form \eqref{eq:StrAlgebra}. The 2-forms $T(p,g)$ can be expressed as
\begin{equation}
  \label{eq:torsionT}
  T(p,g)=\textstyle\frac{1}{2}\underline T^i_{jk}\underline\omega^j\w\underline\omega^k.
\end{equation}

 The 1-form $\Omega$   is ambiguous up to a linear combination of $(\underline\omega^0,\ldots,\underline\omega^3).$ This ambiguity can be resolved by demanding the first order structure equations to have the following form
\begin{subequations}\label{eq:1st-order-str-eqns-cayley}
  \begin{align}
    \exd\underline\omega^0{}={}& -\phi_0  \w\underline\omega^0-{} \theta^2\w\underline\omega^1-\theta^1\w\underline\omega^2,\label{1st-cayley-I}\\
    \exd\underline\omega^1{}={}& -(\phi_0+\phi_1)\w\underline\omega^1 - \theta^2\w\underline \omega^2-\theta^1\w \underline\omega^3,\label{1st-cayley-II}\\
    \exd\underline\omega^2{}={}& -(\phi_0+2\phi_1)\w\underline\omega^2 -\theta^2\w \underline\omega^3+a_7\underline\omega^0\w\underline\omega^1 + a_2\underline\omega^0\w\underline\omega^3,\label{1st-cayley-III}\\
    \exd\underline\omega^3{}={}& -(\phi_0+3\phi_1)\w\underline\omega^3+a_8\underline\omega^0\w\underline\omega^1+a_6\underline\omega^0\w\underline\omega^2+a_4\underline\omega^0\w\underline\omega^3\label{1st-cayley-IV}\\
&+a_5\underline\omega^1\w\underline\omega^2+a_3\underline\omega^1\w\underline\omega^3+a_1\underline\omega^2\w\underline\omega^3\nonumber
  \end{align}
\end{subequations}
for some functions $a_1,\ldots,a_8$ defined on $\cP.$ In the language of Cartan's method of equivalence, the quantities $a_1,\ldots,a_8$ are called \emph{the essential torsion}\footnote{This procedure is known as the \emph{absorption of torsion}. Since in our case, absorbing the torsion is straightforward and  and writing the details is not illuminating, we refer the reader to \cite{IL, Gardner, Olver} for the background on Cartan's method of equivalence.}. For instance, starting with a coframe $(\omega^0,\ldots,\omega^3)$ satisfying
\[\exd\omega^i=\textstyle \frac{1}{2}T^i_{jk}\omega^j\w\omega^k,\]
 and lifting the coframe to $\cP,$  at the point  $(p,g)\in \cP,$ where $g$ is given by \eqref{eq:StrGroup},  one obtains 
\begin{equation}
  \label{eq:a8765}
  a_8={\textstyle\frac{\mathbf{f^2_1}}{\mathbf{f_0}}}\, T^3_{01},\quad a_7={\textstyle\frac{\mathbf{f_1}}{\mathbf{f^2_0}}}\,(\mathbf{f_0}T^2_{01}+\mathbf{u}T^3_{01})\quad a_6={\textstyle\frac{\mathbf{f_1}}{\mathbf{f^2_0}}}\,(\mathbf{f_0}T^3_{02}-\mathbf{u}T^3_{01})\quad
a_5={\textstyle\frac{\mathbf{1}}{\mathbf{f^2_0}}}\,(\mathbf{f_0}T^3_{12}-\mathbf{u}T^3_{02}+\mathbf{v}T^3_{01}).
\end{equation}
which will be used in Section \ref{sec:half-flatness-full}.  Since the structure equations are always written for the lifted coframe $\underline\vartheta$, the underline in  $\underline\omega^i$'s will be dropped from now on.

In each projectivized tangent space $\PP T_xM$, the ruled Cayley cubic $\cC_x$ can be parametrized as  \[v\gamma_1(u)+\gamma_2(u)\] for affine parameters $(u,v),$  where the projective curves $\gamma_1,\gamma_2$ are 
\begin{equation}
  \label{eq:Cayley-parametrized}
  \textstyle{\gamma_1(u)=[\frac{\partial}{\partial\omega^1}-u\frac{\partial}{\partial\omega^0}],\quad \gamma_2(u)=[\frac{\partial}{\partial\omega^3} - u \frac{\partial}{\partial\omega^2} +\frac{1}{3}u^3\frac{\partial}{\partial\omega^0}}].
\end{equation}
In other words, in each tangent space $T_xM,$ the 1-parameter family of 2-planes  given by
\begin{equation}
  \label{eq:RulingFomrs}
  \mathrm{Ker}\textstyle{\{\omega^0+u\,\omega^1-\frac{1}{3}u^3\omega^3,\,\, \omega^2+u\,\omega^3\}}
\end{equation}
are the ruling planes of the cone $\widehat\cC_x.$ One observes that the parameters $u$ and $v$ coincide with the group parameters \textbf{u} and \textbf{v} appearing in  \eqref{eq:StrGroup} where $u$  parametrizes the ruling planes of the Cayley cubic and $v$ is a parameter for the projective line and correspond to the connection forms $\theta^2$ and $\theta^1$  in  \eqref{eq:StrAlgebra} respectively. As a result, the vector fields $\frac{\partial}{\partial \theta^1}$ and $\frac{\partial}{\partial\theta^2}$ are tangent, respectively, to the  ruling lines  and the generating curves of the ruled Cayley  cubic in each projectivized tangent space.

Lastly, note that for each $x\in M$ the flag
\begin{equation}
  \label{eq:invflag}
  \cF_1:=\mathrm{Ker}\{\omega^1,\omega^2,\omega^3\}\subset   \cF_2:=\mathrm{Ker}\{\omega^2,\omega^3\}\subset   \cF_3:=\mathrm{Ker}\{\omega^3\}\subset T_xM
\end{equation}
is invariant under the action of the structure group \eqref{eq:StrGroup}. In fact, using the parametrization \eqref{eq:Cayley-parametrized} the 2-plane $\mathrm{Ker}\{\omega^2,\omega^3\}$ corresponds to $v=\infty.$ This is why, as was mentioned before, Cayley's ruled cubic is an \textit{affine} homogeneous surface since, as a projective surface, the homogeneous part is the open set corresponding to the real values of $u,v.$ The flag \eqref{eq:invflag} will be used further in Section \ref{sec:null-foliation-proj-str}. The reader may consult \cite{Cayley-url} for other interesting aspects of  Cayley's ruled cubic.

 \begin{remark} \label{rmk:Hwang-causal}
   In holomorphic category, causal structures  on \emph{uniruled} complex manifolds were studied in \cite{Hwang-Causal} assuming   that the fibers $\cC_x$ are  smooth at a general point, whose  degree is   at least 3, and arise as the VMRT for  the set of rational curves of minimal degree equal to $n$. Moreover, given a smooth   hypersurface $V\subset\CC\PP^n,$  Hwang  constructed  a $V$-isotrivially flat causal structure. Since the ruled Cayley cubic is not smooth, his construction does not result in a Cayley structure. In \cite{HM-TangentMap}, Hwang and Mok showed that a necessary condition for a causal structure to arise from a VMRT construction is the smoothness of the  normalization of  $\cC_x$ for a general point $x\in M.$ Since the normalization of the ruled Cayley cubic is smooth (\cite{Dolgachev}, Theorem 9.2.1), it would be interesting to know whether a Cayley structure can arise from the VMRT construction.
 \end{remark}

\subsection{The projectivized null cone bundle}
\label{sec:cayl-isotr-flat}
In this section we describe some of the important features of  Cayley structures, which hold, more generally, for all causal structures, that will be used in subsequent sections

Recall from the previous section that the 1-forms $\theta^1$ and $\theta^2$ give a coframe for each fiber $\cC_x\in\PP T_xM$ away from the singular set given by \eqref{eq:invflag}. Hence the 1-forms $(\omega^0,\omega^1,\omega^2,\omega^3,\theta^1,\theta^2)$ introduce a coframe on the 6-dimensional \emph{bundle of projectivized null cones} $\pi\colon\cC\to M$  which are ambiguous up to a multiplication by $\mathbf{f_0,f_1}$ in \eqref{eq:StrGroup}. Restricting to a section $s\colon\cC\to\cP,$ the  1-forms $(\omega^0,\ldots,\omega^3)$ are semi-basic and the 1-forms $(\theta^1,\theta^2)$ are vertical with respect to the fibration $\pi:\cC\rightarrow M.$ Note that the Cayley cubics, $\cC_x,$ are the 2-dimensional integral manifolds  of the Pfaffian system $\{\omega^0,\omega^1,\omega^2,\omega^3\}.$

\begin{remark}\label{rmk:distinction-between-1-forms}
  From now on we may switch viewing $(\omega^0,\omega^1,\omega^2,\omega^3,\theta^1,\theta^2)$ as 1-forms on $\cC$ after restricting to a section $s\colon \cC\to\cP,$ or as 1-forms on the principal $\mathrm{G}_\rho$-bundle $\cP$. The distinction should be clear from the context. 
 \end{remark}
For Cayley structures the conformal class, $[g]\subset\mathrm{Sym}^2(T^*\cC),$ of 
\begin{equation}
  \label{eq:conforma-class-g}
  g=\omega^0\omega^3-\omega^1\omega^2
\end{equation}
 is   invariant on $\cC$ and is degenerate along the vertical directions $\frac{\partial}{\partial\theta^1}$ and $\frac{\partial}{\partial\theta^2}.$ More explicitly, in this case the only group parameters are $\mathbf{f}_0,\mathbf{f}_1$, as expressed in \eqref{eq:StrGroup}, which act by scaling. In fact, $g$ can be viewed as the osculating quadric at each point of  the ruled  Cayley cubics. Note that $[g]$ never induces a conformal structure on $M$ since $\mathcal L_{\frac{\partial}{\partial\theta^2}}g=\omega^2\omega^2,$ where $\mathcal L$ denotes the Lie derivative.

The bundle $\cC$ is endowed with a \emph{quasi-contact} structure (a.k.a an \emph{even-contact} structure)  introduced by $\omega^0,$ i.e., a corank one distribution whose annihilator has Darboux rank five. As a result $\cC$ is  foliated by the \emph{characteristic curves} of this quasi-contact structure which are the integral curves tangent to the degenerate direction of $\omega^0,$ i.e., the line bundle generated by $\ell\in\Gamma(T\cC)$ satisfying 
\begin{equation}
  \label{eq:char-curve} 
\omega^0(\ell)=0,\quad \ell\im\exd\omega^0=0,
\end{equation}
 which implies by \eqref{eq:1st-order-str-eqns-cayley} that $\ell$ is proportional to $\frac{\partial}{\partial\omega^3}.$  In other words, the existence of a foliation by characteristic curves for causal structures is a topological consequence of the non-degeneracy of the projective second fundamental form of the null cones.

More explicitly,  suppose a  causal structure  $(M,\cC)$ is expressed locally as the light cone bundle of a distinguished pseudo-Finsler metric  $F:TM\supset \hat U \rightarrow \RR.$ Using $F,$ locally one can  find a function $L:TM\supset \hat U \rightarrow \RR$ whose degree of homogeneity in $y^i$'s is $r\neq 1$ such that at each point $x\in \pi(\hat U)\subseteq M$ the  set  
 \[L(x^0,x^1,x^2,x^3; y^0,y^1,y^2,y^3)=0,\]
 coincides with  $\widehat\cC_x\cap\hat U$ and moreover the 1-form $$\omega^0=\partial_{y^i}L\,\exd x^i,$$ 
referred to as the \textit{projective Hilbert form}, is well-defined everywhere in $\widehat\cC\cap\hat U.$ To clarify the distinction between $L$ and $F$, we point out that for Cayley structures $L(V)=\rho(V,V,V)$ and  $F(V)=\left(L(V)\right)^{\frac 13}$ where $V\in TM$ and $\rho$ is given in \eqref{eq:cayely}.   We use the term pseudo-Finsler metric since the vertical Hessian of $F^2,$ $[\partial_{y^i}\partial_{y^j}F^2],$  is only assumed to be non-degenerate at every point where $F\neq 0.$ For Cayley structures the vertical Hessian has signature (2,2). Furthermore, the non-degeneracy condition of the second fundamental form of $\cC_x$ implies that   the restriction of the vertical Hessian of $L,$ $[\partial_{y^i}\partial_{y^j}L],$ to  $L=0$ has rank $2.$ As a result, the characteristic  line field on $\cC$ is generated by the vector field 
\[\ell=y^i{\partial_{x^i}}+L^{ij}(\partial_{x^i}L - y^k\partial_{x^k}\partial_{y^i}L)\partial_{y^j},\]
where $[L^{ij}]=[\partial_{y^i}\partial_{y^j}L]^{-1}$. 
Note that if  $L$ is quadratic in $y^i$'s then the expression above   gives the \textit{null geodesic spray} on the projectivized null cone bundle of the pseudo-Riemannian metric $\partial_{y^i}\partial_{y^j}L.$

 \begin{remark}
In \cite{Omid-Sigma}    the equivalence problem of causal structures in any dimension was solved by introducing an $\{e\}$-structure on the bundle of projectivized null cones $\cC$.  It was found that the essential invariants at each point $p=(x,[y])\in \cC$ can be interpreted as  the Fubini cubic form of the fibers $\cC_x$ at the point $[y]$ and the so-called Weyl shadow flag curvature (\wsf curvature for short) which is a generalization of the sectional Weyl curvature in conformal geometry.  If the Fubini form is zero everywhere, then the null cones are quadratic and the causal structure descends to a conformal pseudo-Riemannian structure on $M$. In that case, the \wsf curvature, coincides with the sectional Weyl curvature restricted to a certain   sub-bundle of $T\cC$. 
If the Fubini form is nonzero in an open set, i.e., the null cones are not the quadric in an open set $U\subset \cC$, then one can carry out Cartan's reduction  by normalizing the non-vanishing components of the  Fubini cubic form locally to a constant. This will subsequently enable one to impose a $V$-isotriviality condition. For the sake of brevity, we do not discuss this viewpoint here. 
   Finally,  four dimensional causal structures  can be defined in terms of   rank $3$ distributions $\Sigma_1=\{\omega^0,\omega^1,\omega^2\}^\perp$ with growth vector $(3,5,6)$  containing an integrable distribution $\Sigma_2\subset\Sigma_1$ of corank 1 whose symbol algebra is of   certain form. It turns out that $\cC_x$ are the integral leaves of $\Sigma_2.$ From this point of view, 4-dimensional causal structures  are a special class of \emph{parabolic Monge geometries} of type  $(D_3,P_{123})$ as defined in \cite{ANN}.
 \end{remark}

\subsection{Half-flatness and the structure equations}
\label{sec:half-flatness-full}
  In this section we will define the notion of half-flatness, express the structure equations for a half-flat Cayley structure and state our first theorem.

  \begin{definition}\label{def:half-flatness-}
A Cayley structure is called \emph{half-flat} if there exists a three-parameter family of surfaces in $M$ with the property that through each point $x\in M$ and along each ruling plane of $\widehat \cC_x,$ there passes a unique member of that family that is tangent to that ruling plane.
\end{definition}
The term half-flat is used by analogy with half-flat indefinite conformal structures in dimension four which are characterized by the existence of a three-parameter family of null surfaces that are tangent to the $\alpha$-planes at each point \cite{Penrose}. Recall that the  indefinite quadric, $Q^2\subset\PP^3,$ is doubly ruled, i.e., $Q\cong \PP^1\times\PP^1$. The ruling planes are referred to as $\alpha$-planes and $\beta$-planes at each point. In the case of the ruled Cayley cubic, there is only a single ruling.

In order to express half-flatness as a  differential geometric condition, define $J$ to be the space of ruling planes of the Cayley cubic. It follows that  
\[J\cong \PP^1\times M\subset \mathrm{Gr}_2(TM),\] 
where $\mathrm{Gr}_2(TM)$ is the Grassmannian bundle of 2-planes in $TM$. By our discussion in Section \ref{sec:structure-equations}, $(x^0,x^1,x^2,x^3,u)$  serve as local coordinates for an open subset of $J$ and $(\omega^0,\cdots,\omega^3,\theta^2)$ gives a coframe. By the definition of half-flatness  the lift of the three-parameter family of surfaces foliate $J$ by 2-dimensional leaves. According to Section \ref{sec:structure-equations}, since these leaves arise as tangents to the ruling lines they have to be the integral surfaces of the Pfaffian system
\begin{equation}
  \label{eq:pfaffianI}
  I_{\mathsf{hf}}=\{\omega^0,\omega^2,\theta^2\}.
\end{equation}
In Section \ref{sec:twist-constr} we shall study the leaf space of $I_{\mathsf{hf}}$ and  prove that it is equipped with a path geometry.

By assuming the integrability of $I_{\mathsf{hf}}$ in the structure equations  \eqref{eq:1st-order-str-eqns-cayley}, one obtains the complete structure equations for half-flat Cayley structures to be of the form
\begin{subequations}\label{eq:str-eqns-cayley}
  \begin{align}
    \exd\omega^0{}={}& -\phi_0  \w\omega^0-{} \theta^2\w\omega^1-\theta^1\w\omega^2,\label{StrEq-cayley-I}\\
    \exd\omega^1{}={}& -(\phi_0+\phi_1)\w\omega^1 - \theta^2\w \omega^2-\theta^1\w \omega^3,\label{StrEq-cayley-II}\\
    \exd\omega^2{}={}& -(\phi_0+2\phi_1)\w\omega^2 -\theta^2\w \omega^3+c\, a_6\omega^0\w\omega^1 + a_2\omega^0\w\omega^3,\label{StrEq-cayley-III}\\
    \exd\omega^3{}={}& -(\phi_0+3\phi_1)\w\omega^3+a_6\omega^0\w\omega^2+a_4\omega^0\w\omega^3\label{StrEq-cayley-IV}\\
&+a_5\omega^1\w\omega^2+a_3\omega^1\w\omega^3+a_1\omega^2\w\omega^3\nonumber\\
    \exd\theta^1{}={}&2\phi_1 \w\theta^1+\textstyle{\frac{1}{2}}b_{1ij}\omega^i\w\omega^j + b_{1i4}\omega^i\w \theta^1+ b_{1i5}\omega^i\w \theta^2+b_{145}\theta^1\w\theta^2 \label{StrEq-cayley-V} \\
    \exd\theta^2{}={}&\phi_1\w\theta^2+\textstyle{\frac{1}{2}}b_{2ij}\omega^i\w\omega^j + b_{2i4}\omega^i\w \theta^1+ b_{2i5}\omega^i\w \theta^2+b_{245}\theta^1\w\theta^2\ \label{StrEq-cayley-VI}\\
    \exd\phi_0{}={}&\textstyle{\frac{1}{2}}b_{3ij}\omega^i\w\omega^j + b_{3i4}\omega^i\w \theta^1+ b_{3i5}\omega^i\w \theta^2+b_{345}\theta^1\w\theta^2\label{StrEq-cayley-VII}\\
    \exd\phi_1{}={}&\textstyle{\frac{1}{2}}b_{4ij}\omega^i\w\omega^j + b_{4i4}\omega^i\w \theta^1+ b_{4i5}\omega^i\w \theta^2+b_{445}\theta^1\w\theta^2\label{StrEq-cayley-VIII}
  \end{align}
\end{subequations}
where  $c=2$ or $\frac{3}{2}$ and $b_{\alpha\beta\gamma}$'s, where $0\leq\alpha,\beta,\gamma\leq 5,$ satisfy the relations \eqref{eq:relations-among-invs} in the Appendix.

Note that at the level of first order structure equations \eqref{eq:1st-order-str-eqns-cayley} the half-flatness condition results in the branching
\begin{equation}
  \label{eq:branching-c-StrEq}
  a_8=0,\quad a_7=ca_6,
\end{equation}
where $c=2$ or $\frac{3}{2}.$  In Remark \ref{rmk:LaxPair-branching-involutivity} it will be mentioned that this branching appears in the Lax pair as well.  More explicitly, in terms of the expressions \eqref{eq:a8765}, half-flatness implies $T^3_{01}=0.$ Imposing this condition on a coframe results in the additional condition
\begin{equation}
  \label{eq:TorsionTermsBranching}
  2(T^2_{01})^2- 7 T^2_{01} T^3_{02}+6 (T^3_{02})^2=0
\end{equation}
whose roots are $T^2_{01}={\textstyle\frac{3}{2}}T^3_{02}$ and $T^2_{01}=2T^3_{02}$. Using the expressions of $a_7$ and $a_6$ in \eqref{eq:a8765}, one recovers the branching $a_7=ca_6$. To establish the integrability of the Pfaffian system $I_{\mathsf{hf}}$, one needs to have
\[\exd\theta^2\w \omega^0\w\omega^2=0\] 
which gives a system of 7 first order PDEs in terms of $T^i_{ij}$'s.  Unfortunately, we could not establish  whether any of these branches are empty or contain examples of Cayley structures since our examples in Section \ref{sec:examples} satisfy $a_6=0$. Moreover, neither of the branches results in an involutive system in the sense of Cartan-K\"ahler analysis (see Remark \ref{rmk:LaxPair-branching-involutivity}).

In order to state our first theorem we recall the definition of a Cartan geometry \cite{CS-Parabolic,Sharpe}. 
\begin{definition}\label{def:cartan-conn-def}
  A \emph{Cartan geometry} $(\cG,S,\psi),$ of type $(G,H)$ is a principal $H$-bundle $\cG\to S,$ equipped with a $\mathfrak{g}$-valued 1-form $\psi,$ which is a \emph{Cartan connection}, i.e., 
  \begin{enumerate}
    \item $\psi_u:T_u\cG\rightarrow \mathfrak{g}$ is linear isomorphism for all $u\in \cG.$
    \item $\psi$ is $H$-equivariant, i.e., $R_h^*\psi=\mathrm{Ad}(h^{-1})\circ \psi,$ where $R_h$ denotes the right action by $h\in H.$ 
    \item $\psi(X_v)=v,$  for every fundamental vector field $X_v$ of $\tau:\cG\rightarrow S, v\in \mathfrak h.$
  \end{enumerate}
The curvature of $\psi$ is given by $\Psi=\exd\psi+ \psi\w\psi \in\Omega^2(\cG,\mathfrak{g})$ which is horizontal and defines the \emph{curvature function} $\kappa : \cG\to \bigwedge^2(\mathfrak{g}/\mathfrak{h})^*\otimes\mathfrak{g}.$  
\end{definition}
The following theorem gives the solution of the equivalence problem for  half-flat Cayley structures in terms of an $\{e\}$-structure.
\begin{theorem}\label{thm:half-flatness-full-1}
  Given a half-flat Cayley structure, there exists a principal $\mathrm G_\rho$-bundle $\mu:\cP\rightarrow M,$ and an   $\RR^4\rtimes \mathfrak{g}_\rho$-valued 1-form  
\[\varphi:=
\begin{pmatrix}
  0 & 0\\
  \omega & \Omega
\end{pmatrix}
\]
where $\omega=(\omega^0,\ldots,\omega^3)^T,$ and $\Omega$ is given  in \eqref{eq:StrAlgebra}, such that  $\varphi$  satisfies the structure equations \eqref{eq:str-eqns-cayley} for some functions $a_\mu,$ $ 1\leq\mu\leq 6$ and $b_{\alpha\beta\gamma}$, $0\leq\alpha,\beta,\gamma\leq 5,$ defined on $\cP$  satisfying \eqref{eq:relations-among-invs}. A half-flat Cayley structure is flat if and only if  $a_1$ and  $b_{123}$ vanish which implies that $a_\mu$'s and $b_{\alpha\beta\gamma}$'s vanish and there is  a local coordinate system in which $\omega^i=\exd x^i$ for $0\leq i\leq 3.$   Moreover,  $(\cP,M,\varphi)$ is a Cartan geometry  of type $(\RR^4\rtimes \mathrm{G}_\rho,\mathrm{G}_\rho)$  if and only if the Cayley structure is torsion-free  i.e. $a_1=\cdots=a_6=0$. Torsion-free half-flat Cayley structures locally  depend on 6 functions of 2 variables. 
\end{theorem}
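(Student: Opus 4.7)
The plan is to apply Cartan's method of equivalence to the adapted coframe bundle $\varsigma:\cP\to M$ of Section \ref{sec:structure-equations}, under the half-flatness assumption. Starting from the first-order structure equations \eqref{eq:1st-order-str-eqns-cayley} together with the branching $a_8=0$, $a_7=ca_6$ forced by the integrability of $I_{\mathsf{hf}}$, the first step is to differentiate \eqref{StrEq-cayley-I}--\eqref{StrEq-cayley-IV} and absorb the residual indeterminacy in the pseudo-connection forms $\theta^1,\theta^2,\phi_0,\phi_1$. Standard absorption of torsion singles out a canonical choice of these 1-forms on $\cP$, and their exterior derivatives then take the form \eqref{StrEq-cayley-V}--\eqref{StrEq-cayley-VIII} with coefficients $b_{\alpha\beta\gamma}$. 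Because the structure group is already $\mathrm G_\rho$, no further reduction is possible, and an $\{e\}$-structure is obtained on $\cP$.

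Next, I would impose $\exd^2=0$ on the full system \eqref{eq:str-eqns-cayley}. Tracking wedge products of the various 1-forms yields polynomial relations among the $a_\mu$, the $b_{\alpha\beta\gamma}$ and their coframe derivatives; these are precisely the Bianchi-type identities \eqref{eq:relations-among-invs} and in particular encode the $b_{\alpha\beta\gamma}$ as iterated coframe derivatives of the first-order torsion $a_\mu$ modulo algebraic identities. For the flatness criterion, I would then inspect \eqref{eq:relations-among-invs} to verify that $a_1$ and $b_{123}$ generate, by repeated coframe differentiation, all the remaining $a_\mu$ and $b_{\alpha\beta\gamma}$. The imposition $a_1=b_{123}=0$ therefore propagates through the Bianchi identities and forces every essential invariant to vanish. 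At this point $\varphi$ satisfies $\exd\varphi+\varphi\w\varphi=0$, which is the Maurer--Cartan equation of the affine group $\RR^4\rtimes\mathrm G_\rho$; a Frobenius argument identifies $\varphi$ locally with the left-invariant Maurer--Cartan form and produces coordinates with $\omega^i=\exd x^i$.

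The Cartan geometry statement is then read off from the structure equations. The 1-form $\varphi$ always gives a linear isomorphism $T_u\cP\to\RR^4\rtimes\mathfrak g_\rho$, since $\dim\cP=8=\dim(\RR^4\rtimes\mathfrak g_\rho)$ and the 1-forms $\omega^i,\theta^1,\theta^2,\phi_0,\phi_1$ are pointwise independent, and $\theta^1,\theta^2,\phi_0,\phi_1$ reproduce the fundamental vector fields of $\mathrm G_\rho$ along fibers by construction. The remaining axiom of $\mathrm G_\rho$-equivariance is equivalent, via the affine structure of $\RR^4\rtimes\mathfrak g_\rho$, to the $\RR^4$-component of the curvature $\Psi=\exd\varphi+\varphi\w\varphi$ being horizontal with equivariantly transforming coefficients. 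This component is precisely the torsion assembled from $a_1,\ldots,a_6$ in \eqref{StrEq-cayley-III}--\eqref{StrEq-cayley-IV}, and a direct check of the weights under the action of $\mathrm G_\rho$ shows that horizontality and correct equivariance reduce to the vanishing $a_1=\cdots=a_6=0$.

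Finally, the generality count of torsion-free half-flat Cayley structures is obtained by a Cartan--K\"ahler analysis of the EDS \eqref{eq:str-eqns-cayley} with $a_1=\cdots=a_6=0$ imposed and its Bianchi relations incorporated. The free data are the $b_{\alpha\beta\gamma}$, constrained by the residual identities of \eqref{eq:relations-among-invs}, and the expected output of the Cartan test is that the reduced tableau has last nonzero Cartan character $s_2=6$ with Cartan integer $2$, yielding the advertised dependence on $6$ functions of $2$ variables. The main obstacle I anticipate is the last step: verifying involutivity requires carefully computing the symbol of the prolonged system under the asymmetric $\mathrm G_\rho$-weights on $(\omega^i,\theta^j,\phi_k)$ and handling both branches $c=2$ and $c=\tfrac{3}{2}$, so the bulk of the work will be bookkeeping the Cartan characters and checking that the prolongation closes before the test is applied.
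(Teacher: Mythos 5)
Your overall strategy---absorption of torsion to obtain the $\{e\}$-structure, imposing $\exd^2=0$ to derive \eqref{eq:relations-among-invs} and to propagate $a_1=b_{123}=0$ to full flatness, the Maurer--Cartan/Frobenius argument producing coordinates with $\omega^i=\exd x^i$, and a Cartan--K\"ahler computation for the generality count---is the same as the paper's. There is, however, a genuine gap in your treatment of the Cartan-geometry claim. You locate the obstruction in ``the $\RR^4$-component of the curvature being horizontal,'' identified with the torsion terms of \eqref{StrEq-cayley-III}--\eqref{StrEq-cayley-IV}. But those terms are of the form $a_\mu\,\omega^j\w\omega^k$ and hence are already horizontal for $\cP\to M$; they pose no obstruction whatsoever. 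The actual obstruction sits in the $\mathfrak{g}_\rho$-valued part of $\Phi=\exd\varphi+\varphi\w\varphi$: by \eqref{eq:relations-among-invs}, equations \eqref{StrEq-cayley-V}--\eqref{StrEq-cayley-VIII} contain non-horizontal terms such as $b_{435}\,\omega^3\w\theta^2=a_1\,\omega^3\w\theta^2$ in $\exd\phi_1$, whose coefficients are exactly (combinations of) the $a_\mu$'s. So the mechanism you describe points at the wrong component, even though the conclusion $a_1=\cdots=a_6=0$ is the right one.

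Moreover, even after identifying the correct offending terms, the ``only if'' direction needs an argument you omit: one must rule out absorbing the non-horizontal curvature terms by a semibasic modification $\theta^a\mapsto\theta^a+q^a_j\omega^j$, $\phi_i\mapsto\phi_i+p_{ij}\omega^j$ of the pseudo-connection forms. The paper handles this by noting that equivariance under $\mathrm{G}_\rho$ forces the functions $q^a_j,p_{ij}$ to be constant-coefficient linear combinations of the $a_\mu$'s, and then checking directly that no such combination renders $\Phi$ horizontal unless all $a_\mu$ vanish. Without this step you have only shown that one particular normalization fails to be a Cartan connection, not that torsion-freeness is necessary. The remaining parts of your outline (the flatness criterion, the normal form, and the expectation of last nonzero character $s_2=6$) are consistent with the paper's proof.
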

\begin{proof}
 Showing that the vanishing of  $a_1$ and $b_{123}$ imply flatness  is a matter of computation. By setting $a_1,b_{123}$ to zero and checking the identities $\exd^2=0$ for the  structure equations, one obtains that   all the quantities in \eqref{eq:str-eqns-cayley} vanish. In this article we will not write down the differential relations among the quantities in \eqref{eq:str-eqns-cayley} since it takes a lot of space and is not illuminating.  In Remark \ref{rmk:fund-inv} we discuss how to interpret $a_1$ and $b_{123}$ as the fundamental invariants of half-flat Cayley structures. 

As a result, the vanishing of  $a_1$ and $b_{123}$   imply
\[\Phi=\exd \varphi+\varphi\w\varphi=0.\]
 Hence, the  structure equations \eqref{eq:str-eqns-cayley} become the  Maurer-Cartan equations for the Lie algebra $\RR^4\rtimes \mathfrak{g}_\rho$. Moreover, the Cayley structure obtained from the coframe $\omega^i=\exd x^i$ satisfies these Maurer-Cartan equations hence gives the local model for locally flat Cayley structures. Alternatively one can integrate the structure equations for the flat model to obtain this fact as outlined in  Remark  \ref{flat-model-dxi}. 

Using the definition of a Cartan geometry in \ref{def:cartan-conn-def}, it follows that the structure equations \eqref{eq:str-eqns-cayley} does not define a Cartan geometry $(\cP,M,\varphi)$, since the curvature $\Phi$ is not horizontal, as can be seen, for instance, using \eqref{StrEq-cayley-VIII} and \eqref{eq:relations-among-invs}, that $\exd\phi_1$ involves the 2-form $a_1\omega^3\w\theta^2$.

To obtain a Cartan connection from $\varphi$ on the principal bundle $\cP\to M$ one seeks replacements
\[
  \theta^1\longmapsto\theta^1+ q^1_j\omega^j,\qquad \theta^2\longmapsto \theta^2+q^2_j\omega^j,\qquad     \phi_0\longmapsto \phi_0+p_{0i}\omega^j,\qquad \phi_1\longmapsto \phi_1+p_{1i}\omega^i,
\] 
for some functions $q^1_i,q^2_i,p_{0i},p_{1i}$ on $\cP,$ so that  $\Phi$  involves only horizontal 2-forms, i.e. can be expressed in terms of  $\omega^i\w\omega^j$. Using the infinitesimal group action of $\mathrm{G}_\rho$ on $a_i$'s and the non-horizontal  terms in $\Phi$, it follows that the functions  $q^1_i,q^2_i,p_{0i},p_{1i}$ have to be a linear combination of $a_i$'s with constant coefficients. However, a straightforward computation shows that no such linear combination can make $\Phi$ horizontal, unless all $a_i$'s vanish. Such half-flat Cayley structures will be referred  to as \emph{torsion-free} since the first order structure equations for $\exd\omega^i$'s only involve constant coefficients. 
In the language of Cartan \cite{Gardner,Olver},  the coframe on the principal bundle $\cP\to M$ satisfying structure equations \eqref{eq:str-eqns-cayley} defines an \emph{$\{e\}$-structure} which becomes a Cartan connection if the Cayley structure is torsion-free.  Cartan-K\"ahler analysis shows that torsion-free half-flat Cayley structures  generically depend on 6 functions of 2 variables. We will not explain the Cartan-K\"ahler analysis and its implementation in our case since it will take too much space and refer the reader to \cite{Bryant-Notes} for the details.   
\end{proof}

\begin{remark}\label{rmk:fund-inv}
As was mentioned above, the vanishing of $a_1$ and $b_{123}$ imply the flatness of the Cayley structure. However, this does not necessarily mean that they form the set of fundamental invariants. To illustrate what we mean, two viewpoints will be considered. In the first viewpoint,  a Cayley structure is viewed as the Pfaffian system satisfying the structure equations \eqref{eq:str-eqns-cayley} on the 6-dimensional bundle of projectivized null cones, $\cC$, as explained in Section \ref{sec:cayl-isotr-flat}. In this case  the quantities $a_1$ and $b_{123}$ are well-defined relative invariants on $\cC$ and therefore constitute the set of \emph{fundamental} or \emph{essential invariants} for a Cayley structure. Also, it follows from the structure equations \eqref{eq:str-eqns-cayley} that     $\varphi$ defines a   Cartan connection on the principal $\RR^2$-bundle $\cP\to\cC.$ The vanishing of $a_1$ implies that $a_i=0$ for $i=2,\ldots,6$ i.e. the Cayley structure is torsion-free. The vanishing of $b_{123}$ has more severe consequences and leaves out only 3 possibilities which are discussed in Example \ref{sec:proj-struct-cohom}.
  
However, in this paper a Cayley structure is defined as a field of symmetric cubic tensor on a 4-dimensional manifold whose structure group is given by \eqref{eq:StrGroup}. Because of  differential relations
\[\exd a_1\equiv a_1(\phi_0+2\phi_2)-(a_4+4a_5)\theta^1+(4a_3-5a_2)\theta^2,\quad \exd b_{123}\equiv b_{123}(2\phi_0+7\phi_1)+(3b_{103}+b_{112}-a_{1;3})\theta^1+b_{223}\theta^2\]
modulo $\{\omega^0,\omega^1,\omega^2,\omega^3\},$  the quantities $a_1$ and $b_{123}$ are not tensorial invariants of a Cayley structure over a 4-dimensional manifold. From this point of view, there are two fundamental invariants, $I_1$ and $I_2$ whose transformation law is tensorial under the action of structure group. They take value in two modules $\mathfrak m$ and $\mathfrak n$ which are naturally equipped with a filtration arising from the action of the structure group. To obtain $I_1$, one first derives the following differential relations.
 \[\textstyle{ \frac{\partial}{\partial \theta^1}a_1=-a_4-4a_5},\quad \textstyle{ \frac{\partial}{\partial \theta^2}a_1=-5a_2+4a_3},\quad  \textstyle{ \frac{\partial^2} {\partial\theta^1\partial\theta^2}a_1=-\frac 52 a_6},\quad \textstyle{ \frac{\partial^2} {(\partial\theta^2)^2}a_1=9a_4+10a_5},\quad \textstyle{ \frac{\partial^3}{(\partial \theta^2)^3}a_1=-\frac 72 a_6 }.\]
Additionally, one has $\frac{\partial}{\partial \theta^i}a_6=0$ for $i=1,2.$ Hence, the fundamental invariant $I_1$ is 5-dimensional and the module $\mathfrak m,$ in which it  takes value,  is filtered as $\mathfrak{m}^{-1}\subset \mathfrak{m}^{-2}\subset\mathfrak m^{-3}=\mathfrak m$ where  $I_1|_{\mathfrak{m}^{-1}}=\{a_1\}$ and $I_1|_{(\mathfrak m^{-2}\slash\mathfrak m^{-1})}=\{a_5,a_4,4a_3-5a_2\}$ and $I_1|_{\mathfrak m^{-3}\slash\mathfrak m^{-2}}=\{a_6\}.$  Note that the quantities $a_2$ or $a_3$ do not appear in $I_1,$ instead the linear combination $4a_3-5a_2$ does. In fact, setting $a_1=0$ and checking $\exd^2=0$ for the structure equations \eqref{eq:str-eqns-cayley} implies $a_i=0$ for $i=4,5,6$ and $a_3=\frac 54 a_2.$ One needs to consider the higher Bianchi  identities arising from $\exd ^2 b_{ijk}=0$ and $\exd^2a_2=0$ in order to reach at $a_2=0.$ 

The second fundamental invariant, $I_2,$ is 23-dimensional and takes value in the module, $\mathfrak n,$ which has a filtration  of length 6, $\mathfrak n^{-1}\subset\cdots\subset\mathfrak n^{-6}$. We will not discuss this invariant in more detail as they take a lot of space and are not instructive. We only mention  that $I_2|_{\mathfrak n^{-1}}=\{b_{123}\}$ and  $I_2|_{\mathfrak n^{-6}\slash\mathfrak n^{-5}}=\{(a_6)^2\}$ which suggests that     several components of $I_2$ are quadratic expressions involving $a_i$'s. As a result, $I_1$ and $I_2$ are not functionally independent and, as will be discussed in Example \ref{sec:proj-struct-cohom}, the vanishing of $I_2$ which is equivalent to the vanishing of $b_{123},$ leaves only 3 local models of Cayley structures. 

In Section \ref{sec:3d-path-geometry}, we will show that $\varphi$ always results in a $\mathfrak{sl}_4(\RR)$-valued Cartan connection which corresponds to a 3-dimensional path geometry on the \emph{twistor space}, i.e., the leaf space of $I_{\mathsf{hf}}.$ Furthermore, $b_{123}$ generates the Fels curvature of this path geometry.  Lastly, we mention that, in the language of \cite{Omid-Sigma}, $b_{123}$ constitutes the so-called  \wsf curvature of the causal geometry arising from a Cayley structure. More generally,   the notion of half-flatness, as defined in Definition \ref{def:half-flatness-},  can be extended to any four dimensional causal structure of indefinite signature \cite{Omid-Thesis}. It necessarily implies that the fibers $\cC_x$ are ruled projective surfaces and the \wsf  curvature module is 1-dimensional.
\end{remark} 
\begin{remark}\label{flat-model-dxi}
 One can integrate the structure equations for the flat model and obtain a normal coordinate system for flat Cayley structures. This technique will be used in Example \ref{sec:proj-struct-cohom} as well. If all quantities in \eqref{eq:str-eqns-cayley} vanish,  then by Darboux's theorem equations \eqref{StrEq-cayley-VII} and \eqref{StrEq-cayley-VIII} imply that there is a coordinate system in which 
\[\phi_0=\exd \mathbf{s}_0,\qquad \phi_1=\exd\mathbf{s}_1.\]
Furthermore, by Darboux's theorem, equations \eqref{StrEq-cayley-V} and \eqref{StrEq-cayley-VI} imply the existence of coordinates with respect to which one has
\[\theta^1=e^{2\mathbf{s}_1}\exd\mathbf{t}_1, \qquad \theta^2=e^{\mathbf{s}_1}\exd \mathbf{t}_2.\] 
 Analogously, successive application of Darboux's theorem imply that there are coordinates $x^0,x^1,x^2,x^3$ with respect to which
\[
\begin{aligned}
  \omega^0&=e^{-\mathbf{s}_0}(\exd x^0+x^1\exd \mathbf{t}_2+x^2\exd \mathbf{t}_1),\qquad &\omega^1&=e^{-\mathbf{s}_0-\mathbf{s}_1}(\exd x^1+x^2\exd \mathbf{t}_2+x^3\exd \mathbf{t}_1),\\
 \omega^2&=e^{-\mathbf{s}_0-2\mathbf{s}_1}(\exd x^2+x^3\mathbf{t}_2),\qquad &\omega^3&=e^{-\mathbf{s}_0-3\mathbf{s}_1}\exd x^3.
\end{aligned}\]
Consequently, the choice of coframe on $M$ that corresponds to the identity element of the structure group is obtained by setting $\mathbf{s}_0=\mathbf{s}_1=\mathbf{t}_1=\mathbf{t}_2=0$ which gives $\omega^i=\exd x^i$ for $0\leq i\leq 3.$ 
\end{remark} 

\section{A review of path geometries}
\label{sec:path-geom-manif} 
In this section we  recall some of the well-known facts about path geometries that will be needed in   Sections \ref{sec:twist-constr} and  \ref{sec:addit-geom-struct}.

\subsection{Definitions}

Intuitively, a path geometry on an $(n+1)$-dimensional manifold $Q$ is a $2n$-parameter family of paths on $Q$ with the property that along each direction at every point of $Q$ there passes a unique path of that family. Using the natural lift of a path to the projectivized tangent bundle $\PP TQ,$ a path geometry on $Q$ can be expressed as a foliation of the  $\PP TQ$ by curves that are transversal to the fibers $\PP T_xQ.$ It follows that a path geometry in dimension $n+1$ can be understood, locally, as a system of $n$ ODEs of second order
\begin{equation}\label{systemODE}
   z''=F(t,z,z'),\quad t\in\R,\ \ z\in\R^n,
\end{equation}
given up to point transformations of variables $(t,z)$, i.e.,
\[t\mapsto \tilde t=\tilde t(t,z^1,\ldots,z^{n}),\quad z^i\mapsto \tilde z^i=\tilde z^i(t,z^1,\ldots,z^{n}),\quad 1\leq i\leq n.\]
Indeed, assume that on $Q$ a local coordinate system $z=(z^0,\ldots,z^n)$ is given with a choice of parametrization $t\mapsto \gamma(t)=(z^0(t),\ldots,z^n(t))$ for the paths. Consider an open set  $U\subset \PP TQ$ inside which the condition  $\frac{\exd z^0}{\exd t}\neq 0$ is satisfied. Since the paths are given up to a reparametrization, one can assume $z^0=t$ in $U,$ possibly after a change of  coordinates. This implies that we have a $2n$-parameter family of graphs  $t\mapsto \tilde z(t)=(z^1(t),\ldots, z^{n}(t))$ with a property that the  values of $\tilde z$ and $\frac{\exd \tilde z }{\exd t}$ at  $t=t_0$ determine the graphs uniquely.

In \cite{Cartan-proj}, Cartan solved the equivalence problem of path geometries in dimension two. The equivalence problem of  systems of second order ODEs under point transformations was solved in \cite{Chern}, and later in \cite{Fels,Grossman}. To state the obtained results and for the purposes of this article, we will follow    \cite{B-Finsler, Grossman, BGG}   to give a more abstract definition of path geometries.     

\begin{definition}
\label{def:2D-path-geom}
  A  path geometry  is a triple $(S,\ell_1,\ell_2)$ where $S$ is a $(2n+1)$-dimensional manifold  equipped with a  pair of transverse foliations $(\ell_1,\ell_2)$ whose leaves have dimension 1 and $n,$ respectively, such that  the unique rank $(n+1)$  distribution $\mathcal{K}$ tangent to both $\ell_1$ and $\ell_2$  defines a   \emph{multi-contact} structure on $S,$ i.e., one can write  $\mathcal{K}=\{\eta^1,\ldots,\eta^n\}^\perp$ such that  $\ell_1=\{\eta^1,\ldots,\eta^n,\zeta^1,\ldots,\zeta^n\}^\perp,$ $\ell_2=\{\eta^0,\ldots,\eta^n\}^\perp$  and 
\[\exd \eta^i\equiv -\eta^0\w\zeta^i\ \ \mathrm{mod\ \ }\{\eta^1,\ldots,\eta^n\},\]
for all $1\leq i\leq n$.  
\end{definition}

When $n=1,$  $S$ becomes a contact manifold of dimension three with a pair of transverse line bundles $(\ell_1,\ell_2).$  

It is easy to check that Definition ~\ref{def:2D-path-geom} is satisfied when a path geometry on a manifold $Q$ is introduced as  a foliation of $\PP TQ$ by curves that are transversal to the fibers $\PP T_xQ.$ This can be seen by setting $S=\PP TQ,$ with $\ell_1$  being the foliation of $S$ by curves and  $\ell_2$ being the fibers of the projection  $S\rightarrow Q.$ The converse is not necessarily true (see \cite{B-Finsler}). This is  why geometries defined by  Definition \ref{def:2D-path-geom} are sometimes referred to as \emph{generalized path geometries.}   

It can be shown that the local version of Definition \ref{def:2D-path-geom} can be realized as  a \emph{local path geometry}, i.e.,  a foliation of an open set $U\subset \PP TQ,$ with curves that are transversal to the fibers $ \PP T_xQ.$  In other words, restricting to a sufficiently small  neighborhood $U\subset S$ in Definition \ref{def:2D-path-geom},  $U$ can be realized as an open set of $\PP TQ$ for the $(n+1)$-dimensional manifold $Q$   locally defined as the leaf space of $\ell_2.$ Moreover, $\ell_1$ foliates $U\subset\PP TQ$ by curves that are transversal to the fibers of $\PP TQ\rightarrow Q.$

Given a path geometry on $S$ as in Definition \ref{def:2D-path-geom}, one obtains a Cartan geometry $(\cG,S,\psi)$ of type $(\mathrm{SL}_{n+1}(\RR),P_{12})$, where $P_{12}\subset \mathrm{SL}_{n+1}(\RR)$ is the parabolic subgroup preserving a point of the projectivized tangent bundle of $\PP^n$ (see \ref{def:cartan-conn-def} for the definition of a Cartan geometry). Recall that the action of $\mathrm{SL}_{n+1}(\RR)$ on $\PP^{n+1}$ naturally lifts to an action on $\PP T\PP^{n+1}$.

A modern account of deriving the Cartan connection for path geometries on surfaces can be found in \cite{BGH,IL} and in higher dimensions in \cite{Grossman-Thesis}.  For the purposes of this article, from now on we will  only discuss   path geometries in dimensions three and two. 

\subsection{3D path geometries}\label{sec:3d-path-geometries}
To any generalized path geometry on $S$ with $n=2$ one can associate a Cartan geometry $(\cG,S,\psi)$ of type $(\mathrm{SL}_4(\RR),P_{12})$. The Cartan connection is expressed as the following $\mathfrak{sl}_4(\RR)$-valued 1-form
  \begin{equation}
  \label{eq:path-geom-cartan-conn-3D}
  \psi=
  \def\arraystretch{1.3}
\begin{pmatrix}    
  -\frac{3}{4} \psi_0+\frac{1}{4}\psi_1+\frac{1}{4}\psi_2 &-\gamma_1& -\mu_1&-\mu_0\\
-\pi^2 &-\frac{3}{4}\psi_1+ \frac{1}{4}\psi_2+ \frac{1}{4}\psi_0&-\mu_3&-\mu_2\\
\pi^1&  \pi^4   &-\frac{3}{4}\psi_2+\frac{1}{4} \psi_1+\frac{1}{4}\psi_0& \gamma_2\\
\pi^0 & \pi^3 & \pi^5 &  \frac{1}{4}\psi_2+ \frac{1}{4}\psi_1+\frac{1}{4}\psi_0
\end{pmatrix}    
    \end{equation}
    Restricting to any section $s\colon S\rightarrow \cG,$ the leaves of the foliations $\ell_1$ and $\ell_2$ defined in Definition \ref{def:2D-path-geom} coincide with the integral manifolds of $\{\pi^0,\pi^1,\pi^3,\pi^4\}^\perp,$ and $\{\pi^0,\pi^1,\pi^2\}^\perp,$ respectively, and $\{\pi^0,\pi^1\}^\perp$ gives a multi-contact structure on $S$. 
The fundamental invariants of a path geometry in dimension three, as well as higher dimensions, are given by the so-called \emph{Fels invariants}: $\mathbf{T}=(T^i_j)_{1\leq i,j\leq 2}$ and $\mathbf{S}=(S^i_{jkl})_{1\leq i,j,k,l\leq 2}$, satisfying
\begin{equation}
  \label{eq:fels-invariants-path}
 T^i_j=T^j_i,\qquad T^i_i=0,\qquad S^i_{jkl}=S^i_{(jkl)},\qquad S^i_{ijk}=0.
\end{equation}
In the language of parabolic geometry the Fels invariant $\mathbf{T}$ is \emph{torsion} and  $\mathbf{S}$ is \emph{curvature}. The Fels torsion is also referred to as the Wilczy\'nski invariants since it represents the projective invariants of the projective variety that corresponds to the linearization of the system of ODEs at  each solution, as observed in \cite{Doubrov}. Finally, we mention that the Fels torsion can be viewed as a generalization of the W\"unschmann invariant for scalar third order ODEs (c.f. \cite{DT}). 

More explicitly,  given  a pair of ODEs in the form \eqref{systemODE} with $n=2$ its solution curves define a 3D path geometry on the  jet space $J^1(\RR,\RR^2).$ If $(t,z^1,z^2,p^1,p^2)$ denotes a set of local coordinates for $J^1(\RR,\RR^2)$ then 
\begin{equation}
  \label{eq:fels-invariants-explicit}
  T^i_j=F^i_j-\textstyle{\frac{1}{2}\delta^i_jF^k_k},\qquad\qquad S^i_{jkl}=F^i_{jkl}-\textstyle{\frac{3}{4}}F^r_{r(jk}\delta^i_{l)}
\end{equation}
where $1\leq i,j,k,l\leq 2$ and 
\begin{equation}
  \label{eq:fels-torsion-explicit-components}
  F^i_{j}=\textstyle{-\partial_{z^j} F^i+\frac{1}{2}X_F(\partial_{p^j} F^i)-\frac{1}{4}\partial_{p^k} F^i\partial_{p^j} F^k,}\quad F^i_{jkl}=\partial_{p^j}\partial_{p^k}\partial_{p^l}F^i  ,\quad X_F=\partial_t+p^i\partial_{z^i}+F^i\partial_{p^i}.
\end{equation}

If $S^i_{jkl}=0,$ then the path geometry defines a projective structure on the locally defined  leaf space of $\ell_2$ which is 3-dimensional. If $T^i_j=0,$ then the path geometry is said to be torsion-free and defines a half-flat indefinite conformal structure on the leaf space of $\ell_1,$ which is four dimensional \cite{Grossman,CDT}. 

\subsection{2D path geometries}\label{sec:2d-path-geometries}
Similarly, a Cartan connection for a path geometry on a surface can be expressed as an $\mathfrak{sl}_3(\RR)$-valued  1-form expressed as 
  \begin{equation}
  \label{eq:path-geom-cartan-conn-2D}
  \psi=
      \def\arraystretch{1.15}
  \begin{pmatrix}
    -\frac{2}{3}\psi_0-\frac{1}{3}\psi_1 & -\mu_2 & \mu_0\\
        \pi^1 & \frac{1}{3}\psi_0 -\frac{1}{3}\psi_1 & \mu_1\\
        \pi^0 & \pi^2 & \frac{1}{3}\psi_0 +\frac{2}{3}\psi_1
      \end{pmatrix}.
    \end{equation}
Restricting to any section $s\colon S\rightarrow \cG,$ the leaves of the foliations $\ell_1$ and $\ell_2$ coincide with the integral curves of $\{\pi^0,\pi^2\}^\perp,$ and $\{\pi^0,\pi^1\}^\perp,$ respectively, and $\pi^0$ gives a contact form on $S$. Consequently, the curvature 2-form $\Psi=\exd\psi+\psi\w\psi$ is expressed as
     \begin{equation}
            \label{eq:path-geom-curv}
            \Psi=
  \begin{pmatrix}
    0 & K_1 \pi^0 \w\pi^1 & K_2\pi^0 \w\pi^1+L_2\pi^0\w\pi^2\\
 0 & 0 & L_1\pi^0\w\pi^2\\
0 & 0 & 0
  \end{pmatrix}
\end{equation}
for some functions $K_1,K_2,L_1,L_2$ on $\cG.$  The differential relations among  $K_1,K_2,L_1,L_2$ can be written as
\begin{equation}
  \label{eq:Ki-Li-relations}
  \begin{aligned}
    \exd K_1&\equiv (3\psi_0+\psi_1)K_1 -K_2\pi_2,\mathrm{\ mod\ \pi^0,\pi^1}\\
    \exd L_1&\equiv (3\psi_1+\psi_0)L_1 -L_2\pi_1,\mathrm{\ mod\ \pi^0,\pi^2}\\
    \exd K_2&\equiv (3\psi_0+2\psi_1)K_2 -K_1\mu_1+J\pi^2,\mathrm{\ mod\ \pi^0,\pi^1}\\
    \exd L_2&\equiv (3\psi_1+2\psi_0)L_2 -L_1\mu_2+J\pi^1,\mathrm{\ mod\ \pi^0,\pi^2}
  \end{aligned}
\end{equation}
for some function $J$ on $\cG.$ It follows from the relations above that  $K_1,L_1$ are the \emph{fundamental invariants} of a 2D path geometry, i.e., their vanishing implies that $\Psi=0.$  Moreover, it can be shown that $L_1=0$ implies that the path geometry is locally equivalent to a projective structure on the surface $Q$  defined as the leaf space of the foliation  $\ell_2.$   Consequently, the integral curves of $\ell_1$  project to  a 2-parameter family of curves that are the unparameterized geodesics of a linear connection on $Q.$ 

We finish this section by defining the notion of a Weyl connection for a path geometry $(\cG,H,\psi)$ on a surface. Let $G_0\subset \mathrm{SL}_{3}(\RR)$ denote the set of diagonal matrices with $\mathfrak{g}_0$ denoting its Lie algebra, and consider the canonical projection $p_0\colon\cG\to\cG_0$ of the structure bundle $\cG$ to the underlying principal $G_0$-bundle, $\cG_0.$ 
\begin{definition}\label{def:weyl-conn-def}
  A \emph{Weyl structure} is a $G_0$-equivariant  section $s\colon\cG_0\rightarrow \cG$ of $p_0$ to which one can associate the \emph{Weyl connection} given as the $\mathfrak{g}_0$-valued part of $s^*\psi,$ i.e.,  $(s^*\psi_0,s^*\psi_1,s^*\psi_2)$ in \eqref{eq:path-geom-cartan-conn-2D}.
\end{definition}

\section{Path geometry on the twistor space}\label{sec:twist-constr}
In this section we will consider a twistorial construction for half-flat Cayley structures which is an extension of the standard twistor correspondence of Penrose for indefinite half-flat conformal structures. First we  describe the 3-dimensional path geometry induced on the twistor space. This will be followed by an invariant characterization of 3-dimensional path geometries arising from half-flat Cayley structures.   Finally, we will use $\varphi$ from \ref{thm:half-flatness-full-1} to give a Cartan connection for this path geometry and find its fundamental invariants. 
 
\subsection{Induced 3D path geometry}\label{sec:3d-path-geometry}
In this section we  show that the \emph{twistor space} of a half-flat Cayley structure, i.e., the locally defined 3-dimensional space of the integral manifolds of $I_{\mathsf{hf}},$ denoted by $\cT,$ is endowed with a path geometry. 

 Recall from Section \ref{sec:path-geom-manif} that a local path geometry in dimension 3 is expressed as the equivalence class of a pair of second order ODEs
 \begin{equation}
   \label{eq:pair-of-2nd-order-ODEs}
   (z^i)''=F^i(t,z^1,z^2,(z^1)',(z^2)'),\quad i=1,2,
    \end{equation}
 under point transformations. Given a  half-flat Cayley structure, define $J$ to be the locally defined 5-dimensional leaf space of the Pfaffian system $\{\omega^0,\omega^1,\omega^2,\omega^3,\theta^2\}.$ By our discussion in Section \ref{sec:structure-equations}, $J$ is the space of the ruling lines of the Cayley cubics.  Restricting to an open set of $J,$ a point $q\in J$ can be expressed in a local coordinate system $(x^0,\ldots,x^3;u)$ where $u$ is the fiber coordinate for $J\to M,$ and $(x^0,\ldots,x^3)$ are coordinates on $M.$  Let $(t,z^1,z^2)$ be some local coordinates for $\cT.$ One can write 
\begin{subequations}
  \label{eq:pair-of-ODEs-heuristic}
 \begin{align}
  t&= T(x^0,x^1,x^2,x^3;u),\label{eq:pair-of-ODEs-I}\\
    z^1&= Z^1( x^0,x^1,x^2,x^3;u),\label{eq:pair-of-ODEs-II}\\
  z^2&= Z^2(x^0,x^1,x^2,x^3;u).\label{eq:pair-of-ODEs-III}
\end{align}
\end{subequations}
  Locally, it is possible to solve  \eqref{eq:pair-of-ODEs-I} to get $u=u(t; x^i).$ Additionally, in a suitable choice of coordinates, it can be assumed that $\frac{\exd u}{\exd t}=1.$ Replacing $u=u(t;x^i)$ in  \eqref{eq:pair-of-ODEs-II} and \eqref{eq:pair-of-ODEs-III} and differentiating with respect to $t$, one obtains 
\[z^1=Z^1,\quad z^2=Z^2,\quad (z^1)'=\frac{\partial Z^1}{\partial t},\quad (z^2)'=\frac{\partial Z^2}{\partial  t}.\]
By the Implicit Function Theorem, the system above can be solved  to give  \[x^i=x^i(z^1,z^2,(z^1)',(z^2)'),\quad 0\leq i\leq 3.\]
Using these expressions to replace $x^i$'s in $(z^i)''=\frac{\partial^2 Z^i}{(\partial t)^2},1\leq i\leq 2,$ one arrives at a pair of ODEs of the form \eqref{eq:pair-of-2nd-order-ODEs}.

Alternatively, we employ structure equations \eqref{eq:str-eqns-cayley} to show the existence of a path geometry on   $\cT$  as a result of  Definition \ref{def:2D-path-geom}. On the 5-dimensional manifold $J$ define the line bundle $\ell_1=\{\omega^0,\omega^1,\omega^2,\omega^3\}^\perp$ and the rank 2 distribution $\ell_2=\{\omega^0,\omega^2,\theta^2\}^\perp=I_{\mathsf{hf}}^\perp.$ Using the structure equations $(\ell_1,\ell_2)$ define foliations of dimension 1 and 2 on $J$ and  the rank 3 distribution $\cK=\{\omega^0,\omega^2\}^\perp$ induces a multi-contact structure on $J$ since
\[\exd\omega^0\equiv \omega^1\w\theta^2,\quad \exd\omega^2\equiv\omega^3\w\theta^2,\]
modulo $\{\omega^0,\omega^2\}.$ Hence, $(J,\ell_1,\ell_2)$ defines a 3-dimensional local path geometry. Note that the twistor space $\cT$ is locally defined as the leaf of $\ell_2.$  As is mentioned in Section \ref{sec:path-geom-manif}, $J$ can be viewed as the projectivized tangent bundle of the  twistor space.  More precisely, recall  that  a 5-dimensional manifold $J$ is locally equivalent to $\PP TN$ for a 3-dimensional manifold $N,$ if it is  endowed with a rank 3 distribution $\Delta$ with $\mathrm{rank([\Delta,\Delta])}=5$ such that  there exists a corank one distribution $ \cD\subset \Delta$ which is completely integrable. In this case, $N$ is given as the leaf space of $\cD$ (see \cite{Respondek}). In our case, $\cD=I_{\mathsf{hf}}^\perp$ and $\Delta=\{\omega^0,\omega^2\}^\perp.$

\subsection{Invariant characterization}
We have proved that any half-flat Cayley structure defines a 3-dimensional path geometry on its twistor space. Our goal now is to prove Theorem \ref{thm1a} which characterizes the path geometries that can be obtained in this way. The answer will be given in terms of point equivalence classes of systems of pairs of second order ODEs.  Our approach to the problem will follow \cite{K2}.

We are considering  the pair of ODEs given by \eqref{eq:pair-of-2nd-order-ODEs} in  $J=\PP T\cT,$ which is locally equivalent  to $ J^1(\RR,\RR^2)$ - the space of 1-jets of functions $\R\to\R^2$. Consequently,  $(z^1)',(z^2)'$ are replaced by the jet coordinates $p^1,p^2$. It is a direct consequence of the B\"acklund theorem that all geometric information of the system  \eqref{eq:pair-of-2nd-order-ODEs} up to point transformations are encoded in the pair $(\X,\D)$ (c.f. \cite{DKM}), where
\[
\D=\spn\{\partial_{p^1},\partial_{p^2}\},\qquad\mathrm{and}\qquad \X=\spn\{X_F\},
\]
in which $X_F=\partial_t + p^i\partial_{z^i} + F^i\partial_{p^i}$ is the total derivative vector field  appearing earlier in \eqref{eq:fels-torsion-explicit-components}. Note that the pair $(\X,\D)$ coincides with the pair $(\ell_1,\ell_2)$ defining the path geometry in Section \ref{sec:3d-path-geometry}.

Recall from  Section \ref{sec:path-geom-manif} that for 3-dimensional path geometries the Fels torsion, $\mathbf{T}$, which we will refer to simply as  torsion, is given by
\begin{equation}\label{matrixW}
\mathbf{T}=\left(T^i_j\right)_{i,j=1,2}=\mathbf{F}-\textstyle{\frac{1}{2}}(\tr \mathbf{F})\Id,
\end{equation}
where  $\mathbf{F}=\left(F^i_j\right)_{i,j=1,2}$  is expressed in \eqref{eq:fels-torsion-explicit-components}. When considered as a point invariant, the torsion $\mathbf{T}$ is invariant up to a conjugation and a positive factor only. Unlike half-flat indefinite conformal structures that are in one to one correspondence with pairs of ODEs whose torsion is zero, here we  deal with non-vanishing torsion. We will start by defining convenient sections of $\X$ and $\D$ that will be referred to as projective and normal vector fields.

\subsubsection{Projective vector fields and normal frames.} Let $\mathbi{V}=(V_1,V_2)^T$ be a frame for $\D$ and fix a section $X$ of $\X$. Denote by $\ad_X$ the Lie bracket with respect to $X.$ Writing  $\ad_X\mathbi{{V}}=(\ad_XV_1,\ad_XV_2)^T$, the vector field  $X$ is called \emph{projective} if
\begin{equation}\label{eq_a}
\ad^2_X\mathbi{{V}}+\mathbf{T}^X\mathbi{{V}}=0 \mod \X
\end{equation}
for some trace-free matrix $\mathbf{T}^X\in\Gamma(\cD\otimes\cD^*)$, in which case $\mathbi{{V}}$ is called a \emph{normal frame} corresponding to $X$. 
The existence of a projective vector field and a corresponding normal frame is implied by the existence of solutions to a system of ODEs. By direct inspection it can be checked that  the matrix $\mathbf{T}^X$ coincides with the torsion $\mathbf{T}$ of the system up to a conjugation and a positive factor which depend on the choice of the projective vector field $X$ as explained below. 
\begin{proposition}\label{prop:proj-vect-fields}
If both $X$ and $fX$ are projective vector fields and $\emph{\mathbi{V}}$ and $\mathbf{G}\emph{\mathbi{V}}$ are normal frames corresponding to $X$ and $fX$ respectively, where $f$  and $\mathbf{G}$ are scalar and $\mathrm{GL_2(\RR)}$-valued functions on $J^1(\R,\R^2)$, then
\begin{subequations}\label{eq:eq12}
\begin{align}
2fX^2(f)-X(f)^2&=0,\label{eq1}\\
2fX(\mathbf{G})+X(f)\mathbf{G}&=0.\label{eq2}
\end{align}
\end{subequations}
Moreover, one obtains $\mathbf{T}^{fX}=f^2\mathbf{G} \mathbf{T}^X\mathbf{G}^{-1}$.
\end{proposition}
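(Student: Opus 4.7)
\medskip

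The plan is to expand $\mathrm{ad}_{fX}^2(\mathbf{G}\mathbf{V})$ modulo $\mathcal{X}$ using the Leibniz-like identity $[fX,gY] = fg[X,Y] + fX(g)Y - gY(f)X$, and then match the resulting expression in the frame $(X, \mathbf{V}, \mathrm{ad}_X\mathbf{V})$ against the defining relation for projectiveness of $fX$. Since everything will be read modulo $X$, every term of the form $(\cdots)X$ produced by the bracket computation can be discarded immediately, which keeps the bookkeeping light.

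First I would compute the single bracket $\mathrm{ad}_{fX}(G_{ij}V_j)$; it reduces modulo $X$ to $f\,G_{ij}\,W_j + f\,X(G_{ij})\,V_j$, where $W_j := \mathrm{ad}_X V_j$. Applying $\mathrm{ad}_{fX}$ once more and using the projectiveness hypothesis $\mathrm{ad}_X W_j \equiv -T^X_{jk}V_k \mod X$, I get
\begin{equation*}
\mathrm{ad}_{fX}^2(G_{ij}V_j)\ \equiv\ \bigl[fX(f)G_{ij}+2f^{2}X(G_{ij})\bigr]W_j
\;+\;\bigl[fX(f)X(G_{ij})+f^{2}X^{2}(G_{ij})-f^{2}(\mathbf{G}\mathbf{T}^X)_{ij}\bigr]V_j \pmod{X}.
\end{equation*}
For $fX$ with normal frame $\mathbf{G}\mathbf{V}$ to be projective, the coefficient of $W_j$ must vanish; this is exactly equation \eqref{eq2}.

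Next I would substitute \eqref{eq2}, in the form $X(\mathbf{G}) = -\tfrac{X(f)}{2f}\mathbf{G}$, back into the coefficient of $V_j$ to eliminate $X(\mathbf{G})$ and $X^{2}(\mathbf{G})$. A short calculation collapses the $V_j$-coefficient into
\begin{equation*}
-\tfrac{1}{4}\bigl(2fX^{2}(f)-X(f)^{2}\bigr)\,G_{ij}\;-\;f^{2}(\mathbf{G}\mathbf{T}^X)_{ij}.
\end{equation*}
Comparing this with $-(\mathbf{T}^{fX}\mathbf{G})_{ij}$ (the required form from the projectiveness of $fX$ with normal frame $\mathbf{G}\mathbf{V}$) and multiplying on the right by $\mathbf{G}^{-1}$ yields
\begin{equation*}
\mathbf{T}^{fX}\ =\ f^{2}\,\mathbf{G}\,\mathbf{T}^X\,\mathbf{G}^{-1}\;+\;\tfrac{1}{4}\bigl(2fX^{2}(f)-X(f)^{2}\bigr)\,\Id.
\end{equation*}

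Finally I would take the trace: since both $\mathbf{T}^X$ and $\mathbf{T}^{fX}$ are trace-free by definition of projective vector field, the scalar $\tfrac{1}{4}(2fX^{2}(f)-X(f)^{2})$ must vanish, which is \eqref{eq1}. The displayed identity then reduces to $\mathbf{T}^{fX}=f^{2}\mathbf{G}\mathbf{T}^X\mathbf{G}^{-1}$, establishing the last assertion. The only mildly delicate step is the careful reduction modulo $\mathcal{X}$ in the second bracket: one needs to check that all discarded $(\cdots)X$ contributions indeed vanish modulo $X$ after the outer bracket with $fX$ is taken, which follows from $[fX,hX]=(fX(h)-hX(f))X\in\mathcal{X}$.
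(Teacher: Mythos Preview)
Your argument is correct and is precisely the ``direct computation'' the paper alludes to but does not write out (it only refers to \cite{K2} for details). Your handling of the reduction modulo $\mathcal{X}$ is also correct: since $[fX,hX]\in\mathcal{X}$, discarding the $(\cdots)X$ terms after each bracket is harmless, and the key step---extracting \eqref{eq2} from the $W_j$-coefficient, then \eqref{eq1} from the trace---is clean.
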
 
\begin{proof}
The proof follows from direct computations.  See \cite{K2} for details.
\end{proof}

\subsubsection{Partial connections and Schwarzian derivative.}\label{sec:partial-conn-schw-der}
Fix a projective vector field $X$. We shall denote the second order operator acting on $f$ on the left hand side of  \eqref{eq1} by $\bS^X$, i.e. $\bS^X(f)=2fX^2(f)-X(f)^2$. Moreover, we shall refer to it  as the Schwarzian derivative. Our aim now is to extend its definition to an operator acting on sections of $\D^*\otimes\D$.

Note that in order to find all normal frames corresponding to $X$, one sets $f=1$ in Proposition \ref{prop:proj-vect-fields}. It follows from \eqref{eq2} that  the set of normal frames associated to $X$ is given up to a multiplication by $\mathbf{G}\in\mathrm{ GL}_2(\D)$ satisfying $X(\mathbf{G})=0$. As a result, one can define a \emph{partial connection} on the distribution $\cD,$ i.e., a notion of parallel transport of $\cD$ along integral curves of $X$. Namely,  define  $\nabla^X\colon \X\otimes\Gamma(\D)\to\Gamma(\D)$ by
\[
\nabla^X_Y\mathbi V=0,
\]
where $Y\in\Gamma(\X)$ and $\mathbi V$  is a normal frame  corresponding to $X$. It is straightforward to show that this partial connection is independent of the choice of the normal frame $\mathbi V$.

It follows from \eqref{eq2} that  
\[
\nabla^{fX}=\nabla^X+ \frac{df}{2f}\Id.
\]
Using the natural extension of   $\nabla^X$ to a connection on $\D^*\otimes\D$, we have the following.
\begin{proposition}
Given projective vector fields $X$ and $fX$, and  $\mathbf{A}\in\Gamma(\D^*\otimes\D)$ one obtains 
\[
\nabla^{fX}\mathbf{A}=\nabla^X\mathbf{A}.
\]
\end{proposition}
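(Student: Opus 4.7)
The plan is to reduce the claim to the scalar-shift formula $\nabla^{fX}=\nabla^X+\frac{\exd f}{2f}\Id$ established immediately above on $\Gamma(\D)$, and then to observe that the shift cancels automatically on $\D^*\otimes\D$ because it is a pure trace. The central point is that when one replaces a partial connection on a vector bundle by one that differs by $\lambda\Id$ for a scalar $1$-form $\lambda$, the induced partial connection on the endomorphism bundle is unchanged.

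More concretely, I would first fix $Y\in\Gamma(\X)$ and extend $\nabla^X$ to $\D^*$ by imposing the Leibniz rule $Y(\alpha(V))=(\nabla^X_Y\alpha)(V)+\alpha(\nabla^X_Y V)$ for $\alpha\in\Gamma(\D^*)$ and $V\in\Gamma(\D)$. Applying this same rule with $\nabla^{fX}$ in place of $\nabla^X$ and using $\nabla^{fX}_Y V=\nabla^X_Y V+\tfrac{Y(f)}{2f}V$ from the preceding proposition, one obtains by a direct computation the dual formula
\[
\nabla^{fX}_Y\alpha \;=\; \nabla^X_Y\alpha - \tfrac{Y(f)}{2f}\alpha,
\]
i.e., the scalar shift on $\D^*$ acquires the opposite sign. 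Then applying the tensor-product Leibniz rule to a decomposable section $\mathbf{A}=\alpha\otimes V\in\Gamma(\D^*\otimes\D)$ yields
\[
\nabla^{fX}_Y(\alpha\otimes V)=\bigl(\nabla^X_Y\alpha-\tfrac{Y(f)}{2f}\alpha\bigr)\otimes V+\alpha\otimes\bigl(\nabla^X_Y V+\tfrac{Y(f)}{2f}V\bigr),
\]
and the two $\tfrac{Y(f)}{2f}\alpha\otimes V$ terms cancel, leaving $\nabla^X_Y(\alpha\otimes V)$.

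Since $\D^*\otimes\D$ is locally spanned by decomposable elements and both sides of the desired identity are $C^\infty(J^1(\R,\R^2))$-linear in $\mathbf{A}$, linear extension gives $\nabla^{fX}\mathbf{A}=\nabla^X\mathbf{A}$ for all $\mathbf{A}\in\Gamma(\D^*\otimes\D)$, and the statement is independent of $Y\in\Gamma(\X)$. I do not expect any real obstacle here: the content of the proposition is the standard fact that a conformal rescaling of a connection by a scalar $1$-form is invisible on endomorphism-valued tensors, and the only thing to verify is that the shift produced in Proposition~\ref{prop:proj-vect-fields} is genuinely a multiple of $\Id$, which is precisely what equation~\eqref{eq2} asserts.
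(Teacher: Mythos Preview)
Your argument is correct and is essentially the same as the paper's: both exploit that the shift $\nabla^{fX}=\nabla^X+\tfrac{\exd f}{2f}\Id$ is a pure trace and hence invisible on $\D^*\otimes\D$. The only cosmetic difference is that the paper works directly with the endomorphism formula $(\nabla_Y\mathbf{A})(V)=\nabla_Y(\mathbf{A}(V))-\mathbf{A}(\nabla_Y V)$ and observes the cancellation in one line, whereas you pass through $\D^*$ and decomposable tensors before extending by linearity; the content is identical.
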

\begin{proof}
\begin{align*}
(\nabla^{fX}_Y\mathbf{A})(V)&=\nabla^{fX}_Y(\mathbf{A}(V))-\mathbf{A}(\nabla^{fX}_YV)\\
&=\nabla^X_Y(\mathbf{A}(V))+\alpha(Y)\mathbf{A}(V)-\mathbf{A}(\nabla^XV+\alpha(Y)V)\\
&=(\nabla^X_Y\mathbf{A})(V)
\end{align*}
where $\alpha=\frac{\exd f}{2f}$.
\end{proof}
The above proposition  implies that   $\D^*\otimes \D$ is equipped with a connection $\nabla$ induced by  $\nabla^X$ and independent of $X.$ 

Now, using $\nabla$, we are in a position to define a Schwarzian-like derivative
\[
\widehat\bS^X\colon \Gamma(\D^*\otimes\D)\to\Gamma(\mathrm{Sym}^2(\D^*\otimes\D)).
\]
acting as 
\[
\widehat\bS^X(\mathbf{A})=\textstyle{\frac{1}{2}\left(\nabla_X^2\mathbf{A}\otimes \mathbf{A}+\mathbf{A}\otimes\nabla_X^2\mathbf{A}\right)-\frac{5}{4}\nabla_X \mathbf{A}\otimes \nabla_X \mathbf{A}}
\]
where $\mathbf{A}\in\Gamma(\D^*\otimes\D)$. 
\begin{remark}
  Note that the distinction between the  Schwarzian derivatives $\bS^X$ and $\widehat\bS^X$ is that the former acts on functions and the latter acts on $\Gamma(\D^*\otimes\D).$ The following proposition justifies the definition of $\widehat\bS$.
\end{remark}
\begin{proposition}\label{prop-schwarzian}
If $X$ and $fX$ are projective vector fields for a pair of second order ODEs, for some function $f,$  then
\[
\widehat\bS^{fX}(\mathbf{T}^{fX})=f^6(\widehat\bS^X(\mathbf{T}^X)).
\]
Hence, $\widehat\bS^X(\mathbf{T}^X)$ defines a point invariant of the system, given up to a positive factor.
\end{proposition}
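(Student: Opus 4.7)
The plan is to reduce the proposition to a direct tensor computation on $\D^{*}\otimes\D$, using three ingredients already at hand: (a) the transformation rule $\mathbf{T}^{fX}=f^{2}\mathbf{G}\mathbf{T}^{X}\mathbf{G}^{-1}$ from Proposition~\ref{prop:proj-vect-fields}, which at the intrinsic level of $\D^{*}\otimes\D$ amounts to the endomorphism identity $\mathbf{T}^{fX}=f^{2}\mathbf{T}^{X}$, since $\mathbf{G}$ only records the change of normal frame from $\mathbi{V}$ to $\mathbf{G}\mathbi{V}$ and drops out once the two tensors are compared as sections of the same bundle; (b) the tensoriality $\nabla_{fX}=f\,\nabla_{X}$ of the canonical connection on $\D^{*}\otimes\D$, already shown to be independent of the chosen projective vector field; and (c) the scalar Schwarzian constraint $2fX^{2}(f)=X(f)^{2}$ coming from \eqref{eq1}, which is precisely the projectivity of $fX$ relative to $X$.

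Setting $\mathbf{T}:=\mathbf{T}^{X}$ for brevity, the first step is to compute, via the Leibniz rule and (b),
\[
\nabla_{fX}\mathbf{T}^{fX}=2f^{2}X(f)\,\mathbf{T}+f^{3}\,\nabla_{X}\mathbf{T},
\]
and then to iterate, using (c) to absorb the single $X^{2}(f)$ contribution, obtaining
\[
\nabla_{fX}^{2}\mathbf{T}^{fX}=5f^{2}X(f)^{2}\,\mathbf{T}+5f^{3}X(f)\,\nabla_{X}\mathbf{T}+f^{4}\,\nabla_{X}^{2}\mathbf{T}.
\]
Substituting these together with $\mathbf{T}^{fX}=f^{2}\mathbf{T}$ into the definition of $\widehat\bS^{fX}(\mathbf{T}^{fX})$ produces tensors at three homogeneity weights in $f$: the $f^{6}$-terms reassemble into $f^{6}\widehat\bS^{X}(\mathbf{T})$, whereas the $f^{4}$-terms (of the form $X(f)^{2}\,\mathbf{T}\otimes\mathbf{T}$) and the $f^{5}$-terms (symmetric combinations of $X(f)\,\mathbf{T}\otimes\nabla_{X}\mathbf{T}$) must cancel identically.

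The substantive point, and the only real step, is this cancellation. It is not accidental: the specific coefficient $-\tfrac{5}{4}$ multiplying $\nabla_{X}\mathbf{A}\otimes\nabla_{X}\mathbf{A}$ in the definition of $\widehat\bS^{X}$ is exactly the one forced by requiring the $f^{4}$- and $f^{5}$-families to cancel, in complete analogy with the way the coefficient $-1$ in the definition of the scalar Schwarzian $\bS^{X}(f)=2fX^{2}(f)-X(f)^{2}$ is dictated by the projectivity equation \eqref{eq1}. Once the cancellation is verified, the identity $\widehat\bS^{fX}(\mathbf{T}^{fX})=f^{6}\widehat\bS^{X}(\mathbf{T}^{X})$ is immediate, and the final conclusion that $\widehat\bS^{X}(\mathbf{T}^{X})$ is a point invariant defined up to a positive factor follows from $f^{6}>0$ for every nonzero real $f$. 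I do not anticipate any serious obstacle beyond the bookkeeping of the cancellations; no further geometric input is needed.
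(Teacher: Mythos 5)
Your proposal is correct and follows essentially the same route as the paper: a direct Leibniz-rule computation of $\nabla_{fX}\mathbf{T}^{fX}$ and $\nabla_{fX}^{2}\mathbf{T}^{fX}$ using $\mathbf{T}^{fX}=f^{2}\mathbf{T}^{X}$, with the projectivity constraint $2fX^{2}(f)=X(f)^{2}$ eliminating the lower-weight terms (the paper records this residue as $f^{4}\bS^{X}(f)\,\mathbf{T}^{X}\otimes\mathbf{T}^{X}$ before setting $\bS^{X}(f)=0$, whereas you absorb it mid-computation). Your intermediate formulas and the cancellation of the $f^{4}$- and $f^{5}$-families check out.
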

\begin{proof}
 Let $X$ be a projective vector field. Direct computation shows that
\[
\widehat\bS^{fX}(\mathbf{T}^{fX})=f^6\widehat\bS^X(\mathbf{T}^X)+f^4\bS^X(f)\mathbf{T}^X\otimes \mathbf{T}^X.
\]
It follows that, if $fX$ is another projective vector field, then \eqref{eq1} implies $\bS^X(f)=0$  and consequently $\widehat\bS^{fX}(\mathbf{T}^{fX})=f^6(\widehat\bS^X(\mathbf{T}^X))$. 
\end{proof}

Proposition \ref{prop-schwarzian} implies that the condition $\widehat\bS^X(\mathbf{T}^X)=0$, where $X$ is a projective vector field, has an invariant meaning. We shall write: $\widehat\bS(\mathbf{T})=0$ for short. Further, if $[\mathbf{A}]$ is a conformal class of a section $\mathbf{A}$ of $\D^*\otimes\D$, (a line subbundle of $\D^*\otimes\D$ spanned by $\mathbf{A}$), then we shall write  $\nabla\mathbf{A}\in[\mathbf{A}]$ meaning that $\nabla$ preserves $\mathbf{A}$ up to a scaling factor.  Now we can state and prove our main result in this section.
\begin{theorem}\label{thm1a}
There is a one to one correspondence between 3-dimensional path geometries arising from  half-flat Cayley structures and point equivalence classes of  systems of pairs  of  second order ODEs \eqref{eq:pair-of-2nd-order-ODEs}   satisfying  
\begin{equation}
  \label{eq:inv-cond-ODE-1}
  \mathrm{rank\,}\mathbf{T}=1,\qquad \nabla \mathbf{T}\in[\mathbf{T}],\qquad \widehat\bS (\mathbf{T})=0.
\end{equation}
More explicitly, given the pair of ODEs \eqref{eq:pair-of-2nd-order-ODEs}, the conditions \eqref{eq:inv-cond-ODE-1} means that there exists a function $\phi$ on $J^1(\R,\R^2)$ such that 
\begin{equation}\label{conditions}
\mathrm{rank\, }\mathbf{T}= 1,\qquad
X_F(\mathbf{T})+\frac{1}{2}[\mathbf{H},\mathbf{T}]=\phi \mathbf{T},\qquad
  X_F(\phi)-\frac{1}{4}\phi^2-2\tr \mathbf{F}=0,
\end{equation}
where $\mathbf{H}=\left( - \partial_{p^i}F^j\right)_{i,j=1,2},$ and $\mathbf{T}$ and $\mathbf{F}$ are given in \eqref{matrixW}.
\end{theorem}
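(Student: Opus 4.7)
The plan is to verify the correspondence in both directions by translating between the structure equations \eqref{eq:str-eqns-cayley} on $\cP$ and the invariant conditions \eqref{eq:inv-cond-ODE-1} on $J^1(\R,\R^2)\cong J$.

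For the forward direction, starting with a half-flat Cayley structure, I would first identify the Fels torsion $\mathbf{T}$ of the induced path geometry on $J$ from Section \ref{sec:3d-path-geometry} in terms of the essential torsion functions $a_1,\ldots,a_6$. This is done by pushing the Cartan connection $\varphi$ of Theorem \ref{thm:half-flatness-full-1} down to an $\mathfrak{sl}_4(\R)$-valued Cartan connection of type $(\mathrm{SL}_4(\R),P_{12})$ on the bundle $\tilde\cP\to J$ indicated in diagram \eqref{eq:AllFib-FINAL}, and comparing with \eqref{eq:path-geom-cartan-conn-3D}. The rank one condition on $\mathbf{T}$ then follows from the half-flat branching \eqref{eq:branching-c-StrEq}: geometrically, the fact that Cayley's cubic has only one ruling direction (unlike the doubly-ruled quadric) translates into $\mathbf{T}$ dropping rank. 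The remaining conditions $\nabla \mathbf{T}\in[\mathbf{T}]$ and $\widehat{\bS}(\mathbf{T})=0$ will emerge as Bianchi-type identities from the differential relations \eqref{eq:relations-among-invs}: the first encodes that the coframe derivatives of $a_1,\ldots,a_6$ in the vertical directions $\theta^1,\theta^2$ act only by rescaling, reflecting the invariance of the ruling direction under the residual group $\mathrm{G}_\rho$; the second is a second-order consequence obtained by differentiating along a projective vector field.

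For the reverse direction, given a pair of ODEs satisfying \eqref{eq:inv-cond-ODE-1}, I would reconstruct a half-flat Cayley structure by reversing the twistor construction. The rank one condition supplies a distinguished line bundle $[\mathbf{T}]\subset \D^*\otimes\D$, which on the causal side corresponds to the ruling direction of the Cayley cubic at each point. Picking a projective vector field $X$ with associated normal frame $\mathbf{V}=(V_1,V_2)^T$ via Proposition \ref{prop:proj-vect-fields}, I would assemble an adapted coframe by dualising along $[\mathbf{T}]$ and its complement in $\D$, and check that the resulting structure equations have the half-flat branching required by \eqref{eq:1st-order-str-eqns-cayley}. The condition $\nabla \mathbf{T}\in[\mathbf{T}]$ then ensures that $[\mathbf{T}]$ is preserved along $X$ and hence integrates to a foliation of $J$ whose leaf space is the 4-manifold $M$, while the Schwarzian condition $\widehat{\bS}(\mathbf{T})=0$ is precisely what promotes a rank one cone field on $M$ to a genuine field of Cayley cubics, matching the distinguished third-order projective invariant of Cayley's cubic recorded in \cite{NP-Cayley}.

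The explicit form \eqref{conditions} is then obtained by unwinding the definitions. Using the partial connection $\nabla^X$ and the scaling behaviour in Proposition \ref{prop:proj-vect-fields}, the identity $\nabla \mathbf{T}\in[\mathbf{T}]$ becomes $X_F(\mathbf{T})+\tfrac{1}{2}[\mathbf{H},\mathbf{T}]=\phi\mathbf{T}$, with the commutator picking up from the $-\tfrac{1}{2}(\tr \mathbf{F})\Id$ shift in \eqref{matrixW} and $\mathbf{H}=(-\partial_{p^i}F^j)$ encoding the connection form of $\nabla^X$ on $\D$; the Schwarzian condition then expands to $X_F(\phi)-\tfrac{1}{4}\phi^2-2\tr\mathbf{F}=0$ after substituting the definition of $\widehat{\bS}$. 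The main obstacle I anticipate is cleanly matching the Bianchi identities from \eqref{eq:str-eqns-cayley} with the three conditions and showing no additional relations on the pair of ODEs are forced beyond \eqref{eq:inv-cond-ODE-1}; this amounts to a dimension count that should be consistent with the Cartan-K\"ahler analysis underlying Theorem \ref{thm:half-flatness-full-1}.
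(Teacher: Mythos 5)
There is a genuine gap in the direction that goes from a pair of ODEs satisfying \eqref{eq:inv-cond-ODE-1} to a Cayley structure on the solution space, which is the hard direction of the theorem. Your proposal says you would "assemble an adapted coframe by dualising along $[\mathbf{T}]$ and its complement in $\D$" and that $\widehat\bS(\mathbf{T})=0$ "promotes a rank one cone field on $M$ to a genuine field of Cayley cubics" via a third-order projective invariant of the cubic, but this never identifies the mechanism that actually produces the cubic cone. The mechanism in the paper is the normal-frame equation $\ad_X^2\mathbi{V}+\mathbf{T}^X\mathbi{V}=0 \mod \X$: once the torsion is normalized to the constant nilpotent matrix \eqref{eq:W-fX}, this becomes a linear second-order ODE in the integral parameter $\lambda$ of the projective vector field whose solutions are polynomial in $\lambda$ of degrees $1$ and $3$, namely $V_1=Y_1+\lambda Z_1$ and $V_2=Y_2+\lambda Z_2+\frac{\lambda^2}{2}Y_1+\frac{\lambda^3}{6}Z_1$; the $\lambda$-family of planes $\mu_*\spn\{V_1,V_2\}$ is then literally the parametrized Cayley cubic \eqref{eq:Cayley-parametrized}, with the $\frac{1}{3}u^3$ term coming from the degree-$3$ solution. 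Without this step there is no reason the cone swept out in each tangent space of the solution space should be Cayley's cubic rather than some other ruled surface.

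The roles you assign to the second and third conditions are also partly misplaced. The condition $\nabla\mathbf{T}\in[\mathbf{T}]$ is not what produces the foliation with leaf space $M$ (that is the line field $\X$, which exists unconditionally); its role is to show that the partial connection $\nabla^X$ preserves $\ker\mathbf{T}^X$ (the computation forcing $c^1_2=0$), so that one can choose a \emph{normal} frame in which $\mathbf{T}^X$ retains the form \eqref{eqW}. The Schwarzian condition $\widehat\bS(\mathbf{T})=0$ reduces in such a frame to $wX^2(w)-\frac{5}{4}X(w)^2=0$, which is exactly the condition that the rescaling $f=w^{-1/2}$ needed to normalize the torsion entry to $1$ satisfies $\bS^X(f)=0$, i.e.\ that $fX$ is again a projective vector field (the M\"obius freedom of Remark \ref{rmk:mobius-trans}); it is not a projective invariant of the cubic itself. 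Your forward direction (Cayley $\Rightarrow$ conditions) via the induced $\mathfrak{sl}_4(\RR)$-connection is workable — the paper indeed computes the Fels torsion \eqref{eq:FelsTorsion-Cayley} that way in Section \ref{sec:cartan3d} — but the proof of the theorem only needs the observation that $\gamma_1,\gamma_2$ from \eqref{eq:Cayley-parametrized} already furnish a normal frame with torsion \eqref{eq:W-fX}, which makes all three conditions immediate.
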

\begin{proof}
Assume that \eqref{eq:inv-cond-ODE-1} holds. Fix $X$ - a projective vector field. Then $\mathrm{rank\,} \mathbf{T}^X=1,$ and it follows that $\mathrm{ker\,}\mathbf{T}^X$ is 1-dimensional. Because $\mathbf{T}^X$ is trace-free, we can choose a frame $\mathbi{U}=(U^1,U^2)^T$ for $\D$ with respect to which  
\begin{equation}\label{eqW}
\mathbf{T}^X=\left(\begin{array}{cc}
0 & 0\\
 w & 0
\end{array}\right)
\end{equation}
for some function $w.$ The frame is not necessarily normal with respect to $X$. However, the second condition in \eqref{eq:inv-cond-ODE-1} means that  $\nabla_X \mathbf{T}^X=\phi\mathbf{T}^X,$ for some function $\phi$. We shall exploit this and show  that $\mathbi{U}$ can be replaced by a normal frame such that $\mathbf{T}^X$ is still in the form \eqref{eqW}. This is essentially due to the fact that  $\nabla^X$  preserves the kernel of $\mathbf{T}^X$. More precisely, define  functions $c_j^i$  by the relation
\[
\nabla^X_XU^i=c^i_1U^1+c^i_2U^2.
\] 
Then, on one hand it follows that
\[
(\nabla_X\mathbf{T}^X)(U^2)=\phi w U^1,
\]
and, on the other hand we have
\[
\begin{aligned}
(\nabla_X\mathbf{T}^X)(U^2)&=\nabla^X_X(\mathbf{T}^X(U^2))-\mathbf{T}^X(\nabla^X_X(U^2))\\
&=w c^1_2 U^2+(X(w)+w(c^1_1-c^2_2))U^1.
\end{aligned}
\]
Comparing the two expressions, one obtains  $c^1_2=0$. Now, in order to find a normal frame, we can look for $V^1=gU^1$ satisfying $\nabla^X_XV^1=0$. Since $\nabla^X_XU^1=c^1_1U^1$ we get that any solution $X(g)=-gc^1_1$ yields a normal vector field $V^1=gU^1$ which is in the kernel of $\mathbf{T}^X$. This vector field, accompanied by any other normal vector field $V^2$, gives a normal frame on $\D$ for which $\mathbf{T}^X$ is in the form \eqref{eqW}, possibly for a different function $w$.

Now, in any normal frame  $\nabla_X(\mathbf{T}^X)$ can be expressed by  applying $X$ to each entry of $\mathbf{T}^X$. Therefore, the condition $\widehat\bS^X(\mathbf{T}^X)=0$ reduces to a scalar equation for $w,$ expressed as
\begin{equation}\label{eqomega}
w X^2(w)-\textstyle\frac{5}{4}X(w)^2=0.
\end{equation}
The equation above is equivalent to $\bS^X(f)=0$ where $w=\frac{1}{f^2},$ for a positive function $f$. By assumption, we know that \eqref{eqomega} is satisfied and we can take $fX$ with $f=w^{-\frac{1}{2}}$ as a new projective vector field. Using Proposition \ref{prop:proj-vect-fields}, one can always find a normal frame corresponding to $fX$ with respect to which 
\begin{equation}
  \label{eq:W-fX}
  \mathbf{T}^{fX}=\left(\begin{array}{cc}
0 & 0\\
 1 & 0
\end{array}\right).
\end{equation}
Furthermore, let $\lambda$ be a natural integral parameter of  $fX$ which implies $\ad_{fX}=\frac{\exd}{\exd\lambda}$. Consequently, the equation \eqref{eq_a} can be expressed as a second order ODE for $\mathbi{V}$ in terms of $\lambda$ with  $\mathbf{T}^{fX}$ given by  \eqref{eq:W-fX}. Solving this pair of linear ODEs, it follows that there exists a normal frame for $\D$ corresponding to $fX$ of the form
\[
V_1=Y_1+\lambda Z_1,\qquad V_2= Y_2+\lambda Z_2 +\frac{\lambda^2}{2}Y_1+\frac{\lambda^3}{6}Z_1 \mod\X,
\]
for some vector fields $Y_1,Y_2,Z_1,Z_2$ such that $\ad_{fX}Y_i=\ad_{fX}Z_i=0\mod\X$.
It follows that
\[
\D=\spn\{V_1,V_2\}=\spn\{Y_1+\lambda Z_1,Y_2+\lambda Z_2 +\frac{\lambda^2}{3}Y_1\}\mod\X.
\]
Let $\mu\colon J^1(\R,\R^2)\to J^1(\R,\R^2)/\X$ be the quotient map to the solution space. Then the collection of the 2-planes $\mu_*\D(\lambda)$, where $\lambda$ parametrizes an integral curve of $fX$, is a ruled Cayley cubic in the  tangent space of the solution space at the point that corresponds to the solution curve $\X$. Indeed, this follows from the parametric form of the Cayley cubic given by \eqref{eq:Cayley-parametrized} with   $\gamma_1=[\mu_*V_1]$,  $\gamma_2=[\mu_*V_2-\frac{1}{2}\mu_*V_2]$ and $u=-\lambda$.

The structure obtained in this way is clearly half-flat as $\D$ is an integrable distribution on $J^1(\R,\R^2)$ and the projection of its leaves results in a 3-parameter family of surfaces satisfying Definition \ref{def:half-flatness-}. 

Conversely, any half-flat Cayley structure gives rise to a path geometry as explained in Section \ref{sec:3d-path-geometry} and using  $\gamma_1 $ and $\gamma_2$ in  \eqref{eq:Cayley-parametrized} one obtains a normal frame that puts the torsion into a form of \eqref{eq:W-fX}.

Finally, the conditions \eqref{conditions} are obtained from \eqref{eq:inv-cond-ODE-1} by direct computations in a standard coordinate system on $J^1(\R,\R^2)$;.
\end{proof}
\begin{remark}\label{rmk:mobius-trans}
The key point in the proof is the  choice of $f$ made for  the vector field $fX$ that puts $\mathbf{T}^{fX}$ in the form \eqref{eq:W-fX}. A priori, one can always do so by a simple rescaling provided that $\mathbf{T}^X$ is in the form \eqref{eqW}. However, in the proof we require that the new $fX$ be a projective vector field, which means that $\bS(f)=0$. This is guaranteed be the third condition in \eqref{eq:inv-cond-ODE-1} which implies that the integral parameters of $X$ and $fX$ are related by a M\"obius transformation $\tilde \lambda=\frac{a\lambda+b}{c\lambda+d}$ for some constants $a,b,c,d$.
 \end{remark}

We believe that the  approach presented  here to characterize ODEs with non-vanishing torsion arising from Cayley structures can be used to treat  other classes of $V$-isotrivial half-flat causal structures. Lastly, we mention that in the language of parabolic geometry \cite{CS-Parabolic} a choice of projective vector field  in Proposition \ref{prop-schwarzian} gives rise to a Weyl connection for the path geometry on $J$ in terms of which the partial connection  in Section \ref{sec:partial-conn-schw-der} can be defined.   

\subsection{Cartan connection}\label{sec:cartan3d}
Using the structure equations \eqref{eq:str-eqns-cayley}, we introduce an $\mathfrak{sl}_4(\RR)$-valued   Cartan connection associated to the 3D path geometry on $J,$ restricted to the 8-dimensional bundle $\cP\to J.$ To do so,  define $\psi$ as in \eqref{eq:path-geom-cartan-conn-3D} where
\begin{equation}\label{eq:sl4-conn-horiz-forms}
\pi^0=\omega^0,\quad\pi^1=\omega^2,\quad\pi^2=\theta^2,\quad \pi^3=\omega^1,\quad\pi^4=\omega^3,\quad\pi^5=\theta^1.
\end{equation}
Due to their length, the expressions of the other  entries of $\psi$  is  provided   in  \eqref{eq:sl4-connection-forms} and \eqref{eq:sl4-conn-forms-c32}.

Using the Cartan connection above, one can find the Fels invariants \eqref{eq:fels-invariants-path} for the path geometry arising from a half-flat Cayley structure. It turns out that the Fels torsion is given by
\begin{equation}
  \label{eq:FelsTorsion-Cayley}
    \mathbf{T}=\left(T^i_j\right)_{i,j=1,2}=
  \begin{pmatrix}
    0 & 0\\
    1 & 0
  \end{pmatrix},
  \end{equation}
which coincides with \eqref{eq:W-fX} as expected. Moreover, the Fels curvature $\mathbf{S}$ is comprised of five components  $W_0,\ldots,W_4$  given by $W_0=b_{123}$ and
\begin{equation}
  \label{eq:FelsCurv-Cayley}
  W_i=\textstyle{\frac{\partial^i}{(\partial \theta^1)^i}b_{123},\qquad 1\leq i\leq 4. }
  \end{equation}
Furthermore, one obtains 
\[
\textstyle{\frac{\partial^5}{(\partial \theta^1)^5}b_{123} =0.}
\]

It follows that, if $b_{123}=0,$ then $\mathbf{S}=0,$ which implies that the 3-dimensional path geometry descends to a projective structure on $\cT$ which will be discussed in Example  \ref{sec:proj-struct-cohom} in detail.  Moreover, if $a_1=b_{123}=0,$ then the resulting projective structure can be shown \cite{Omid-Thesis} to be equivalent to the Egorov projective structure \cite{Egorov} which is the unique submaximal projective structure in dimension three.
More precisely, the Egorov projective structure is locally equivalent to the point equivalence class of  pair of ODEs
\begin{equation}
  \label{eq:Egorov-ODE-flatCayley}
  {z^1}''=z^2,\qquad
{z^2}''=0,
\end{equation}
which is classically associated to the ruled Cayley cubic surface \cite{Sasaki-Notes} with its algebra of point symmetries being isomorphic to $\RR^4\rtimes\mathfrak{g}_\rho.$  

Lastly, we mention that the  $\{e\}$-structure in Theorem  \ref{thm:half-flatness-full-1} defined for a Cayley structure on $M,$ in general, does not transform to a Cartan connection on $J$. This can be shown similarly to the proof of Theorem \ref{thm:half-flatness-full-1}. Also one can find necessary and sufficient conditions  for such Cartan connection to exist on $J$ but we will not discuss them here.   
\section{Appearances of 2D path geometry}
\label{sec:addit-geom-struct}
In this section we show how  path geometry in two dimensions arises in three different ways from  half-flat Cayley structures. This will enable us to define the notion of ultra-half-flatness and find the generality of some  interesting classes of half-flat Cayley structures using the Cartan-K\"ahler machinery.

\subsection{Three parameter family of projectively flat surfaces}
\label{sec:three-param-family}
In this section we  show that the integral manifolds of $I_{\mathsf{hf}}=\{\omega^0,\omega^2,\theta^2\}$ are equipped with a path geometry and that this path geometry is projectively flat. Firstly,
recall that  the integrable Pfaffian system $I_{\mathsf{hf}}$ in \eqref{eq:pfaffianI} gives a foliation of the projectivized null cone bundle $\cC$ by 3-dimensional manifolds. Let $S_{\mathsf{hf}}$ denote a 3-dimensional integral manifold of $I_{\mathsf{hf}}$. The tangent space of each integral manifold is spanned by $\frac{\partial}{\partial\omega^1},\frac{\partial}{\partial\omega^3},\frac{\partial}{\partial\theta^1}$. Hence, each integral manifold projects to a surface in $M.$ Note that these are the 3-parameter family of surfaces coming from Definition \ref{def:half-flatness-}.

As was mentioned in  Section \ref{sec:cayl-isotr-flat},  $\cC$ is foliated by characteristic curves generated by $\frac{\partial}{\partial\omega^3}. $ As a result,  the integral manifolds of $I_{\mathsf{hf}}$ are foliated by such curves as well, resulting in a line bundle $\ell_1$ over each integral manifold. Additionally, any leaf of $I_{\mathsf{hf}}$ is equipped with a projection to $M$ with 1-dimensional fiber, generated by $\frac{\partial}{\partial\theta^1}$, which defines another line bundle $\ell_2$ over each integral manifold. By Definition \ref{def:2D-path-geom}, $(S_{\mathsf{hf}},\ell_1,\ell_2)$ defines a 2D path geometry.

To write down the structure equations for the path geometry of a leaf $S_{\mathsf{hf}}$, we set $\{\omega^0=0,\omega^2=0,\theta^2=0\}.$ The 1-forms $(\omega^1,\omega^3,\theta^1)$ define a coframe on $S_{\mathsf{hf}}.$  It follows that 
\begin{equation}
  \label{eq:alpha-path-1st}
  \begin{aligned}
    \exd\omega^1&=-(\psi_0+\psi_1)\w\omega^1+\omega^3\w\theta^1,\\
    \exd\omega^3&\equiv -\psi_0 \w\omega^3\mathrm{\ \ mod\ \ }\omega^1,\\
     \exd\theta^1&\equiv -\psi_1 \w\theta^1\mathrm{\ \ mod\ \ }\omega^1,\\
  \end{aligned}
\end{equation}
where $\psi_0 \equiv  \phi_0+3\phi_1, \psi_1 \equiv -2\phi_1$ modulo $\{\omega^1,\omega^3,\theta^1\}$.
\begin{remark}\label{rmk:caution-1-forms-spaces}
  As was mentioned in Remark \ref{rmk:distinction-between-1-forms}, one should pay attention to  the spaces over which $\omega^0,\omega^2$ and $\theta^2$ are defined. In our description above, we treated $(\omega^0,\omega^2,\theta^2)$ as 1-forms over $\cC,$ although, to be more precise, one first takes a section $s:\cC\rightarrow \cP$ and then considers $s^* I_{\mathsf{hf}}.$ To avoid such technical issues we take another point of view which will be helpful in the next two sections as well.

  The 1-forms $(\omega^0,\omega^2,\theta^2)$ are defined over $\cP.$ Hence, the integral manifolds of $I_{\mathsf{hf}}$ are 5-dimensional submanifolds of $\cP$ endowed with a coframe $(\omega^1,\omega^3,\theta^1,\phi_0,\phi_1).$ Restricting to one of such integral manifolds, it follows that the Pfaffian system $I_1:=\{\omega^1,\omega^3,\theta^1\}$ is integrable   with 2-dimensional leaves. Define  $S_{\mathsf{hf}}$ to be the locally defined leaf space of $I_1$. The 1-forms $(\omega^1,\omega^3,\theta^1)$ give a coframe on $S_{\mathsf{hf}}.$ Moreover, by equations \eqref{eq:alpha-path-1st} one  defines $\ell_1$ and $\ell_2$ to be the foliations  $\{\omega^1,\theta^1\}^\perp$ and $\{\omega^1,\omega^3\}^\perp$, respectively. By Definition \ref{def:2D-path-geom}, $(S_{\mathsf{hf}},\ell_1,\ell_2)$ defines a path geometry. Finally, note that $S_{\mathsf{hf}}$ can be identified with an integral manifold of $I_{\mathsf{hf}}$ as a Pfaffian system on $\cC.$
 \end{remark}

Inspecting equations \eqref{eq:alpha-path-1st}, it follows that $(\omega^1,\omega^3,\theta^1,\psi_0,\psi_0)$ defines a Weyl connection, defined in \ref{def:weyl-conn-def}, for a 2D path geometry after absorption of inessential torsion terms.  More explicitly, one obtains an  $\mathfrak{sl}_3(\RR)$-valued 1-form $\psi_{\mathsf{hf}},$ as in  \eqref{eq:path-geom-cartan-conn-2D}, by setting 
\[
\begin{gathered}
\pi^0=\omega^1,\quad \pi^1=\omega^3,\quad \pi^2=\theta^1\\
  \psi_0 = -a_3\omega^1+\phi_0+3\phi_1,\quad  \psi_1 = -a_2\omega^1-2\phi_1,\quad\mu_1= 2a_6\omega^1\\
\textstyle{\mu_2 =-b_{212}\omega^1+b_{223}\omega^3-(2a_3+a_2)\theta^1},\qquad 
\mu_0=\textstyle{-2a_{6;3}\omega^1-(b_{102}+a_{2;3})\omega^3-2a_6\theta^1}.
\end{gathered}
\]
when $c=2$ in \eqref{eq:branching-c-StrEq}. For the case $c=\frac{3}{2}$ an associated Weyl connection differs from the one above by
\[\textstyle{\mu_1=\frac{3}{2}a_6\omega^1},\qquad 
\mu_0=\textstyle{-2a_{6;3}\omega^1-(b_{102}+a_{2;3})\omega^3-\frac{3}{2}a_6\theta^1}.\]
It is a matter of computation to show 
\[\exd\psi_{\mathsf{hf}}+\psi_{\mathsf{hf}}\w\psi_{\mathsf{hf}}=0,\] 
which implies that the induced path geometry on the leaves of $I_{\mathsf{hf}}$ is projectively flat. Hence, one obtains the following.
\begin{proposition}\label{prop:flat-path-geom}
Given a half-flat Cayley structure, each member of the 3-parameter family of surfaces in Definition \ref{def:half-flatness-} is equipped with a projectively flat connection. 
\end{proposition}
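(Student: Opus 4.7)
The proof is essentially a direct verification. The plan is to check that the $\mathfrak{sl}_3(\RR)$-valued 1-form $\psi_{\mathsf{hf}}$ constructed immediately above the statement, when pulled back to an integral leaf of $I_{\mathsf{hf}}$, satisfies $\exd\psi_{\mathsf{hf}} + \psi_{\mathsf{hf}}\wedge\psi_{\mathsf{hf}} = 0$. By the discussion in Section \ref{sec:2d-path-geometries}, the vanishing of this curvature 2-form forces the fundamental invariants $K_1, L_1$ of the 2D path geometry $(S_{\mathsf{hf}},\ell_1,\ell_2)$ to vanish, so the path geometry is projectively flat. In particular, $L_1 = 0$ guarantees that the path geometry descends to a projective structure on the leaf space of $\ell_2$; but this leaf space coincides, under the projection $\cC \to M$, with a member of the 3-parameter family of surfaces appearing in Definition \ref{def:half-flatness-}, which yields the claim.

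To carry out the verification, I would first restrict the structure equations \eqref{eq:str-eqns-cayley} to a leaf by working modulo $\{\omega^0,\omega^2,\theta^2\}$; this is legitimate precisely because $I_{\mathsf{hf}}$ is integrable, a fact encoded in the half-flatness hypothesis. Next, substitute the explicit prescriptions for $\psi_0,\psi_1,\mu_0,\mu_1,\mu_2$ given above into the $\mathfrak{sl}_3(\RR)$-valued connection matrix \eqref{eq:path-geom-cartan-conn-2D}, and assemble $\exd\psi_{\mathsf{hf}} + \psi_{\mathsf{hf}}\wedge\psi_{\mathsf{hf}}$ entry by entry. Each entry will be a 2-form whose coefficients are polynomials in the torsion functions $a_i$, the curvature functions $b_{\alpha\beta\gamma}$, and their coframe derivatives; one must show that every such coefficient reduces to zero modulo the Bianchi identities \eqref{eq:relations-among-invs} derived from $\exd^2 = 0$ applied to \eqref{eq:str-eqns-cayley}.

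The principal obstacle is bookkeeping rather than conceptual difficulty: the shifts $\psi_0 \mapsto \psi_0 - a_3\omega^1$, $\psi_1 \mapsto \psi_1 - a_2\omega^1$ and the specific forms of $\mu_0, \mu_1, \mu_2$ have clearly been chosen to absorb the leftover torsion produced in \eqref{eq:alpha-path-1st}, so the nontrivial step is verifying that no residual curvature remains once these substitutions are made. Because $\mu_0, \mu_1$ differ between the branches $c = 2$ and $c = \tfrac{3}{2}$, the calculation must be carried out twice, though the structural pattern is identical. A symbolic algebra check using the \texttt{Cartan} package in \texttt{Maple}, consistent with the computational conventions stated in the introduction, would be the natural means of completing the verification.
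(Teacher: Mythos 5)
Your proposal is correct and follows the paper's own argument exactly: the paper constructs the same $\mathfrak{sl}_3(\RR)$-valued 1-form $\psi_{\mathsf{hf}}$ for both branches $c=2$ and $c=\tfrac{3}{2}$ and then notes that $\exd\psi_{\mathsf{hf}}+\psi_{\mathsf{hf}}\wedge\psi_{\mathsf{hf}}=0$ is ``a matter of computation,'' which is precisely the symbolic verification you outline. (The paper does sketch an alternative, twistorial argument much later, via the affine structure on the fibers of $\nu\colon J\to\cT$, but that is not the proof given for this proposition.)
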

\begin{remark}
  More generally, it can be shown that the 3-parameter family of surfaces for any half-flat causal structures are equipped with a flat projective structure \cite{Omid-Thesis}.  The conformal version of this fact, i.e., the 3-parameter family of null surfaces for any half-flat conformal structure of split signature are equipped with a flat projective structure, has been shown in \cite{LM-zollfrei,Belgun}. 
\end{remark}

\subsection{A null foliation and 2D projective structures}
\label{sec:null-foliation-proj-str}

Recall from \eqref{eq:invflag} that the distribution $\cF_2=\mathrm{Ker}\{\omega^2,\omega^3\}$ is invariantly defined for any Cayley structure. 
Inspecting the structure equations \eqref{eq:str-eqns-cayley}, it follows that the condition 
\[a_6=0\]
implies  the integrability of  the  Pfaffian system
\[I_{\mathsf{cong}}=\{\omega^2,\omega^3\}.\]
Moreover, using   \eqref{eq:cayely}, it follows that the 2-distribution $\cF_2$ is null with respect to the cubic form $\rho.$ By analogy with conformal structures, we say that the integrability of $I_{\mathsf{cong}}$  gives a foliation (or a congruence) of $M$  by \emph{null surfaces}.

\begin{remark}\label{rmk:null-foliation-2d-caution}
  Here again one has to be careful with the spaces over which the 1-forms $(\omega^2,\omega^3)$  are defined. If they are considered as 1-forms over $\cP$ then $I_{\mathsf{cong}}$ foliates $\cP$ by 6-dimensional leaves which project to a foliation of $M,$ by 2-dimensional null surfaces mentioned above. Alternatively, one can take a section  $s:M\rightarrow \cP$ and consider the foliation induced by $s^*I_{\mathsf{cong}}.$ Note that because  of the form of the structure group given by \eqref{eq:StrGroup}, the integrability of  $I_{\mathsf{cong}}$ is independent of the section $s.$
\end{remark}

Similar to the previous section, it can be shown that the  integral manifolds of $I_{\mathsf{cong}}$ are equipped with a path geometry. Following Remark \ref{rmk:caution-1-forms-spaces}, setting $\omega^2=0,\omega^3=0,$ one restricts to a 6-dimensional submanifold  of $ \cP$ over which  the Pfaffian system
\[I_{\mathsf{null}}=\{\omega^0,\omega^1,\theta^2\}\]
is integrable. Denoting the locally defined leaf space of $I_{\mathsf{null}}$ by $S_{\mathsf{null}},$ it follows  that $S_{\mathsf{null}}$ is 3-dimensional and $(\omega^0,\omega^1,\theta^2)$ gives a coframe on it satisfying
\begin{equation}
  \label{eq:null-Folia-path-geom-1st}
  \begin{aligned}
        \exd\omega^0&=-(\psi_0+\psi_1)\w\omega^0+\omega^1\w\theta^2,\\
    \exd\omega^1&\equiv -\psi_0 \w\omega^1\mathrm{\ \ mod\ \ }\omega^0,\\
     \exd\theta^2&\equiv -\psi_1 \w\theta^2\mathrm{\ \ mod\ \ }\omega^0,\\
  \end{aligned}
\end{equation}
where $\psi_0 \equiv \phi_0+\phi_1, \psi_1 \equiv -\phi_1$ modulo $\{\omega^0,\omega^1,\theta^2\}$.

Defining  $\ell_1$ and $\ell_2$ to be $\{\omega^0,\omega^1\}^\perp$ and $\{\omega^0,\theta^2\}^\perp$, respectively, Definition \ref{def:2D-path-geom} implies that $(S_{\mathsf{null}},\ell_1,\ell_2)$ is a path geometry. 

 \begin{proposition}\label{prop:null-foliation-2d}
 The path geometry induced on  3-folds $S_{\mathsf{null}}$ locally arises from a projective structure on the integral surfaces of $I_{\mathsf{cong}}$.
\end{proposition}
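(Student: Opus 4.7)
The plan is to mirror the strategy of Proposition~\ref{prop:flat-path-geom}: build an $\mathfrak{sl}_3(\RR)$-valued Cartan connection $\psi_{\mathsf{null}}$ for the 2D path geometry $(S_{\mathsf{null}},\ell_1,\ell_2)$, compute its curvature, and verify the vanishing of the invariant $L_1$ from Section~\ref{sec:2d-path-geometries}. By the discussion there, $L_1=0$ forces the path geometry to descend to a projective structure on the two-dimensional leaf space of $\ell_2$, which one then identifies with an integral surface of $I_{\mathsf{cong}}$ in $M$.

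First, to align with the conventions of \eqref{eq:path-geom-cartan-conn-2D}, I would set
\[
\pi^0=\omega^0,\qquad \pi^1=\theta^2,\qquad \pi^2=\omega^1,
\]
so that the line fields $\ell_1=\{\omega^0,\omega^1\}^\perp=\{\pi^0,\pi^2\}^\perp$ and $\ell_2=\{\omega^0,\theta^2\}^\perp=\{\pi^0,\pi^1\}^\perp$ match the description of Section~\ref{sec:2d-path-geometries}. The diagonal forms $\psi_0,\psi_1$ of $\psi_{\mathsf{null}}$ are then built from $\phi_0,\phi_1$, and the off-diagonal forms $\mu_0,\mu_1,\mu_2$ from $\theta^1$, each corrected by semi-basic terms to absorb the inessential torsion in \eqref{eq:null-Folia-path-geom-1st}. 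The absorption is carried out exactly as in Section~\ref{sec:three-param-family}, using the explicit formula for $\psi_{\mathsf{hf}}$ as a template; the hypothesis $a_6=0$ is what allowed us to restrict to $\omega^2=\omega^3=0$ in the first place and is already baked into \eqref{eq:null-Folia-path-geom-1st}.

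Second, I would compute the curvature $\Psi_{\mathsf{null}}=\exd\psi_{\mathsf{null}}+\psi_{\mathsf{null}}\wedge\psi_{\mathsf{null}}$ by feeding the full Cayley structure equations \eqref{eq:str-eqns-cayley} and the relations \eqref{eq:relations-among-invs} into the absorbed connection. By \eqref{eq:path-geom-curv} the entry in position $(2,3)$ takes the form $K_2\pi^0\wedge\pi^1+L_2\pi^0\wedge\pi^2$ and the entry $(3,3)$ contains $L_1\pi^0\wedge\pi^2=L_1\,\omega^0\wedge\omega^1$; reading off the coefficient of $\omega^0\wedge\omega^1$ gives $L_1$ as an explicit polynomial expression in $a_i$'s, $b_{\alpha\beta\gamma}$'s, and their coframe derivatives. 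Using $a_6=0$ and its differential consequences obtained from applying $\exd$ to \eqref{StrEq-cayley-III}--\eqref{StrEq-cayley-IV}, I would verify that this polynomial is identically zero. Once $L_1=0$, Section~\ref{sec:2d-path-geometries} immediately yields the projective structure on the leaf space $Q$ of $\ell_2$, and $Q$ is identified with an integral surface of $I_{\mathsf{cong}}$ by observing that $\cP'=\{\omega^2=\omega^3=0\}\subset\cP$ projects to such an integral surface in $M$ and that the fibers of this projection are generated by $\ell_2$ together with the leaves of $I_{\mathsf{null}}$ already quotiented out in passing to $S_{\mathsf{null}}$.

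The main obstacle is the explicit verification that $L_1\equiv 0$. Conceptually the calculation is a routine absorption-and-read-off, but in practice it requires careful bookkeeping of the correction terms added to $\mu_0,\mu_1,\mu_2,\psi_0,\psi_1$ and a systematic use of the relations in the Appendix to simplify the resulting expression. By analogy with Section~\ref{sec:three-param-family}, where the analogous form $\mu_1$ was proportional to $a_6$, I expect that in the present setting the terms of $L_1$ that could fail to vanish are all proportional to $a_6$ or to coframe derivatives of $a_6$ along directions tangent to $I_{\mathsf{cong}}$, so that the hypothesis $a_6=0$ (and the identity $\exd a_6\equiv 0$ modulo the appropriate ideal) forces $L_1=0$. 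As indicated in the paper's conventions, this mechanical verification can be carried out with the \texttt{Cartan} package in \texttt{Maple}.
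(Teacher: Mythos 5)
Your overall strategy is the same as the paper's: absorb the inessential torsion of \eqref{eq:null-Folia-path-geom-1st} into an $\mathfrak{sl}_3(\RR)$-valued connection $\psi_{\mathsf{null}}$, read off the curvature in the form \eqref{eq:path-geom-curv}, observe that one of the two fundamental invariants vanishes, and invoke Section \ref{sec:2d-path-geometries}. However, your choice $\pi^1=\theta^2$, $\pi^2=\omega^1$ crosses the wires. In the convention of \eqref{eq:path-geom-curv}, swapping $\pi^1\leftrightarrow\pi^2$ interchanges the roles of the $K$'s and the $L$'s; with your assignment, the quantity you would compute as $L_1$ (the coefficient of $\pi^0\w\pi^2=\omega^0\w\omega^1$ in the $(2,3)$ entry) is what the paper finds to be the long, generically nonzero expression, while it is your $K_1$ that vanishes identically. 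So the step "verify $L_1\equiv 0$" would fail as written. Relatedly, the leaf space of your $\ell_2=\{\pi^0,\pi^1\}^\perp=\{\omega^0,\theta^2\}^\perp$ is the quotient of $S_{\mathsf{null}}$ by $\frac{\partial}{\partial\omega^1}$, a surface with coframe $(\omega^0,\theta^2)$ that still involves the fiber coordinate of the cone bundle; it is \emph{not} an integral surface of $I_{\mathsf{cong}}$. The null surfaces are the quotient by $\frac{\partial}{\partial\theta^2}$, i.e.\ the leaf space of $\{\omega^0,\omega^1\}^\perp$, with coframe $(\omega^0,\omega^1)$.

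The fix is small but essential: take $\pi^0=\omega^0$, $\pi^1=\omega^1$, $\pi^2=\theta^2$ as in the paper's proof (so that $\ell_2=\{\pi^0,\pi^1\}^\perp=\{\omega^0,\omega^1\}^\perp$ is the fiber direction of $S_{\mathsf{null}}\to M$), and then the invariant that vanishes is indeed $L_1$ and its leaf space is the null surface. Note also that the paper does not argue that the potentially nonvanishing terms of $L_1$ are "proportional to $a_6$ and its derivatives"; it simply exhibits the normalized connection $\psi_{\mathsf{null}}$ explicitly (its correction terms involve $a_4+a_5$, $a_2-a_3$ and quadratic expressions in the $a_i$'s, not merely the template from Section \ref{sec:three-param-family}) and computes that $L_1=0$ while $K_1$ is a nonzero polynomial in the $a_i$'s, $b_{\alpha\beta\gamma}$'s and their coframe derivatives. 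Your heuristic about $a_6$ is therefore not the mechanism at work; the vanishing of $L_1$ is an identity of the normalized connection, with $a_6=0$ only entering as the hypothesis that makes $I_{\mathsf{cong}}$ integrable in the first place.
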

\begin{proof}\label{pf:null-foliation-2d}
  Firstly, we find the curvature of the path geometry $(S_{\mathsf{null}},\ell_1,\ell_2).$ In order to do so, after absorption of inessential torsion terms, one obtains an  $\mathfrak{sl}_3(\RR)$-valued 1-form $\psi_{\mathsf{null}}$   defined as in \eqref{eq:path-geom-cartan-conn-2D}, where
\[
\begin{gathered}
\pi^0=\omega^0,\quad \pi^1=\omega^1,\quad \pi^2=\theta^2,\\
    \psi_0 =(a_2-a_3)\omega^1+\phi_0+\phi_1,\quad\psi_1 = (a_4+a_5)\omega^0+(a_3-a_2)\omega^1-\phi_1,\quad\mu_1=0\\
\textstyle{\mu_2 =(-2 a_2 a_4-\frac 7 3  a_2 a_5+\frac 43a_3a_4+\frac{4}{3} a_3 a_5+\frac{2}{3} a_{4;1}-a_{5;1}) \omega^0+b_{201} \omega^1+(a_4+a_5)\theta^2},\\
\mu_0=\textstyle{(a_2a_4+\frac 53 a_2a_5-\frac 23 a_3a_4-\frac 23a_3a_5+b_{401}+\frac 56 a_{4;1}+a_{5;1})\omega^0}.
\end{gathered}
\]
The curvature 
\[\Psi_{\mathsf{null}}=\exd \psi_{\mathsf{null}}+\psi_{\mathsf{null}}\w\psi_{\mathsf{null}}\] 
takes the form \eqref{eq:path-geom-curv}, where $L_1=0$ and 
\[
\begin{aligned}
K_1&=-\textstyle{\frac{41} 2 a_2a_3a_4-\frac{133}6 a_2a_3a_5 -\frac 56 b_{201;0}+ \frac 16 b_{401;1}-\frac 54b_{402;0} +6a_2a_{4;1}-\frac 14 a_4b_{402}+ \frac{17}6 a_2b_{401}}\\
&\ \ \textstyle{+\frac{5}3 b_{201}a_5
+ \frac{22}3 a_2 a_{5;1}- \frac 73 a_3b_{401}+\frac{53} 3 a_2^2a_5+\frac 74 a_4b_{201}-4a_4a_{5;2}-\frac 14a_5b_{402}}\\
&\ \ \textstyle{-4a_5a_{4;2}-4a_4a_{4;2}-4a_5a_{5;2} -\frac{53}{12}a_3a_{4;1}+15a_2^2a_4+7a_3^2a_4+7a_3^2a_5-\frac{29}6a_3a_{5;1}}\\
\end{aligned}
\]
Since $L_1=0$, it follows that the path geometry locally arises from a projective structure on  the leaf space of the foliation  $\ell_2.$  Such surfaces are integral surfaces of $I_{\mathsf{cong}}$ discussed in Remark \ref{rmk:null-foliation-2d-caution}\footnote{Since  the integral manifolds of $I_{\mathsf{cong}}$ are referred to as null surfaces of $M,$ we used the subscript $\mathsf{null}$ for the 3-fold $S.$ }.
\end{proof}

\begin{remark}
  Using Cartan-K\"ahler analysis, it follows that projective structures arising from the construction above on the leaves of  $I_{\mathsf{cong}}$ locally depend on 1 function of 2 variables.

  It turns out that if the half-flat Cayley structure is torsion-free, i.e., $a_1=0,$ then the projective structure on the leaves of the null foliation  $I_{\mathsf{cong}}$ is flat.
\end{remark}

\subsection{Ultra-half-flatness and 2D path geometries}
\label{sec:ultra-half-flatness-1}

There is yet another appearance of 2-dimensional path geometry. As discussed in the previous section, the condition $a_6=0,$ implies that the Pfaffian system $I_{\mathsf{cong}}$ is integrable. Consider the 2-dimensional  leaf space, $Q,$ of  the induced foliation on $M$.
The 1-forms $(\omega^2,\omega^3)$ give  a coframe on $Q.$ We would like to investigate the conditions that imply the existence of a path geometry on $Q.$ Motivated by \cite{Calderbank}, we  give the following definition
\begin{definition}
  A half-flat Cayley structure is called \emph{ultra-half-flat} if it is equipped with a foliation by null surfaces whose locally defined space of leaves has a path geometry.
\end{definition}
\begin{remark}\label{rmk:ultra-half-flatness}
  Recall from Section  \ref{sec:3d-path-geometry} that  Cayley structures that are half-flat correspond to a class of  3D path geometries on the leaf space of the Pfaffian system $I_{\mathsf{hf}}.$  The term ultra-flatness is used to highlight the association of an additional 2D path geometry.
\end{remark}
In order to find conditions that ensure  Definition \ref{def:2D-path-geom} is satisfied for some 3-fold projecting to $Q$,  we make the replacement 
\begin{equation}
  \label{eq:gamma2-theta2}
  \gamma^2=\theta^2-a_2 \omega^0.
\end{equation}
As a result, the 1-forms $(\omega^2,\omega^3,\gamma^2)$ satisfy 
\[
\begin{aligned}
  \exd\omega^2&=-(\psi_0+\psi_1)\w\omega^2+\omega^3\w\gamma^2,\\
  \exd\omega^3&\equiv -\psi_0\w\omega^3\quad \mathrm{mod\ \ }\omega^2,\\
    \exd\gamma^2&\equiv -\psi_1\w\gamma^2+\textstyle{(\frac{\partial }{\partial\omega^3}a_2+b_{203})\omega^0\w\omega^3} \quad \mathrm{mod\ \ }\omega^2,\\
\end{aligned}
\]
where $\psi_0\equiv \phi_0+3 \phi_1,\psi_1\equiv -\phi_1$ modulo $\{\omega^0,\omega^1,\omega^2,\omega^3,\theta^1,\gamma^2\}.$ 

Assuming 
\begin{equation}
  \label{eq:cond-path-cN}
\textstyle{\frac{\partial}{\partial\omega^3}a_2}+b_{203}=0,
\end{equation}
it follows that the Pfaffian system
\[I_{\mathsf{uhf}}=\{\omega^2,\omega^3,\gamma^2\}\]
is integrable. Moreover, its 3-dimensional space of leaves, $S_{\mathsf{uhf}},$ is endowed with a path geometry, where $\ell_1,\ell_2$ correspond to the foliations  $\{\omega^2,\gamma^2\}^\perp$ and $\{\omega^2,\omega^3\}^\perp,$ respectively.

Imposing the condition $a_6=\textstyle{\frac{\partial}{\partial\omega^3}a_2}+b_{203}=0$ and checking the relations arising from  $\exd^2=0,$ one obtains a branching based on the  two possibilities
\begin{subequations}
\label{eq:cond-path-ais}  
  \begin{align}
  a_5&=0, \label{eq:cond-path-ais-I}\\ 
 a_5&= \textstyle{-\frac{1}{2}a_4}. \label{eq:cond-path-ais-II}
\end{align}
\end{subequations}

Let us assume that the conditions $a_6=a_5=\textstyle{\frac{\partial}{\partial\omega^3}a_2+b_{203}}=0$ are satisfied. As a result,  the path geometry $(S_{\mathsf{uhf}},\ell_1,\ell_2)$ carries a $\mathfrak{sl}_3(\RR)$-valued 1-form $\psi_{\mathsf{uhf}},$ as in \eqref{eq:path-geom-cartan-conn-2D}, where
\[
\begin{gathered}
  \pi^0=\omega^2,\quad \pi^1=\omega^3,\quad \pi^2=\gamma^2,\\
  \psi_0= -a_4\omega^0-a_3\omega^1-a_1\omega^2+\phi_0+3\phi_1,\quad \psi_1= a_4\omega^0+a_3\omega^1-\phi_1\\
  \begin{aligned}
    \mu_2&= \textstyle{a_1\gamma^2+(2a_1a_2-b_{101}-2b_{403})\omega^0-b_{212}\omega^1 +(-\frac{1}{3}a_{1;3} +\frac{4}{3}b_{112}+b_{103})\omega^2}\\
   &\phantom{=} +b_{223}\omega^3-(2a_3+a_2)\theta^1,
  \end{aligned}\\
  \mu_1= \textstyle{\frac{2}{3}(a_3-a_2)\omega^2,\quad \mu_0= \frac{2}{3}(a_2-a_3)\gamma^2 -(\frac{1}{3}b_{212}+b_{102})\omega^2+\frac{1}{3}(b_{112}-a_{1;3})\omega^3}
\end{gathered}
\]
The curvature $\Psi_{\mathsf{uhf}}=\exd \psi_{\mathsf{uhf}}+\psi_{\mathsf{uhf}}\w \psi_{\mathsf{uhf}}$ is expressed as \eqref{eq:path-geom-curv}, where
\begin{equation}
  \label{eq:K1-L1}
  \begin{aligned}
    K_1&=\textstyle{4a_1b_{223}-3a_3b_{123}+a_2b_{123}+b_{223;2}+\frac{4}{3}b_{103;3}-\frac{5}{3}b_{123;1}-\frac 13b_{223;35}}\\
    L_1&=\textstyle{-\frac{4}{3}a_4}.
  \end{aligned}
\end{equation}

Carrying out necessary Cartan-K\"ahler analysis, one  arrives at the following  theorem.

\begin{theorem}\label{thm:ultra-half-flatness}
  A half-flat Cayley structure satisfying $a_6=a_5=\textstyle{\frac{\partial}{\partial\omega^3}a_2+b_{203}}=0$ in \eqref{eq:str-eqns-cayley} is ultra-half-flat. Locally, such Cayley structures depend on 3 functions of 3 variables. Moreover, such ultra-half-flat Cayley structures for which the path geometries $(S_{\mathsf{null}},\ell_1,\ell_2)$ and $(S_{\mathsf{uhf}},\ell_1,\ell_2)$ are both projectively flat depend on 8 functions of 2 variables.  
\end{theorem}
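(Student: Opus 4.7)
The first assertion is essentially contained in the preceding discussion. Setting $a_6 = 0$ forces the Pfaffian system $I_{\mathsf{cong}} = \{\omega^2,\omega^3\}$ to be integrable, so that $M$ is foliated by null surfaces. After the replacement $\gamma^2 = \theta^2 - a_2\omega^0$ in \eqref{eq:gamma2-theta2}, one checks by direct differentiation using \eqref{eq:str-eqns-cayley} that the additional condition $\frac{\partial}{\partial\omega^3}a_2 + b_{203} = 0$ is exactly what is needed for $\exd\gamma^2 \equiv 0$ modulo $\{\omega^2,\omega^3,\gamma^2\}$, which together with the $\exd\omega^2,\exd\omega^3$ equations makes $I_{\mathsf{uhf}}$ integrable with a three dimensional leaf space $S_{\mathsf{uhf}}$. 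Imposing finally $a_5 = 0$ picks the branch \eqref{eq:cond-path-ais-I} compatible with $\exd^2 = 0,$ and the coframe $(\omega^2,\omega^3,\gamma^2)$ on $S_{\mathsf{uhf}}$ together with the foliations $\ell_1 = \{\omega^2,\gamma^2\}^\perp,\ \ell_2 = \{\omega^2,\omega^3\}^\perp$ satisfies Definition \ref{def:2D-path-geom}, so the leaf space $Q$ of $I_{\mathsf{cong}}$ carries a path geometry, proving ultra-half-flatness.

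For the generality count of such ultra-half-flat Cayley structures, the plan is to apply Cartan's test to the exterior differential system on $\cP$ obtained by appending the three closed-form relations $a_6 = 0$, $a_5 = 0$, $\frac{\partial}{\partial\omega^3}a_2 + b_{203} = 0$ (together with all their prolongations forced by $\exd^2 = 0$) to the structure equations \eqref{eq:str-eqns-cayley}. First, I would propagate these vanishing conditions through the Bianchi identities \eqref{eq:relations-among-invs}; each application of $\exd$ to a vanishing torsion term produces linear relations among the $b_{\alpha\beta\gamma}$'s and their coframe derivatives, cutting the independent free coefficients down to a manageable list. Then I would form the tableau of free derivatives at a generic point, compute the Cartan characters $s_1,s_2,s_3,s_4$ and verify involutivity by checking that the dimension of the first prolongation equals $s_1 + 2s_2 + 3s_3 + 4s_4$. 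The expected outcome, matching the statement, is the sequence $(s_1,s_2,s_3) = (0,0,3)$ with $s_4 = 0$, yielding local dependence on $3$ functions of $3$ variables.

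For the second part, add to the system the vanishing of the fundamental invariants for projective flatness of both induced 2D path geometries. Concretely, one imposes $K_1 = 0$ and $L_1 = 0$ from \eqref{eq:K1-L1} for $\psi_{\mathsf{uhf}}$ (where $L_1 = -\tfrac{4}{3}a_4 = 0$ sets $a_4 = 0$, and then $K_1 = 0$ is a scalar relation on the remaining $b$'s), together with the analogous vanishing of the curvatures of $\psi_{\mathsf{null}}$ from the proof of Proposition \ref{prop:null-foliation-2d}. After substituting these relations back into \eqref{eq:str-eqns-cayley}, one again runs Cartan's test on the restricted EDS. The combined conditions collapse the tableau so that the top Cartan character drops by one level, giving $(s_1,s_2,s_3,s_4) = (0,8,0,0)$, i.e.\ dependence on $8$ functions of $2$ variables.

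The main obstacle is twofold: first, organizing the large web of Bianchi identities from \eqref{eq:relations-among-invs} so that the successive derivatives of the three imposed conditions close up without forcing further invariants to vanish (and in particular, verifying that neither branch \eqref{eq:cond-path-ais-I} nor the additional projective flatness conditions degenerate the system to the flat model); and second, the explicit computation of the Cartan characters, which requires choosing an ordered flag of 1-forms adapted to the filtration coming from the structure group \eqref{eq:StrGroup} so that the tableau is in upper-triangular form and involutivity can be read off directly. These calculations are carried out with the \texttt{Cartan} package in \texttt{Maple} as noted at the end of the introduction.
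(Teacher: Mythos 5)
Your proposal follows essentially the same route as the paper: the first claim is exactly the chain of observations preceding the theorem (integrability of $I_{\mathsf{cong}}$ from $a_6=0$, the replacement $\gamma^2=\theta^2-a_2\omega^0$, the condition $\frac{\partial}{\partial\omega^3}a_2+b_{203}=0$ making the coframe $(\omega^2,\omega^3,\gamma^2)$ close up on $S_{\mathsf{uhf}}$, and the branch $a_5=0$ from \eqref{eq:cond-path-ais-I}), while both generality counts are obtained, as in the paper, by running the Cartan--K\"ahler test on the correspondingly restricted exterior differential system, with the projective-flatness conditions $L_1=-\tfrac43 a_4=0$ and $K_1=0$ for the two $2$D path geometries appended in the second case. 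One correction: the character sequences you predict, $(s_1,s_2,s_3)=(0,0,3)$ and $(s_1,s_2,s_3,s_4)=(0,8,0,0)$, cannot occur, since Cartan characters are non-increasing, $s_1\geq s_2\geq s_3\geq s_4$; the conclusion ``$k$ functions of $n$ variables'' only requires the \emph{last} nonvanishing character to be $s_3=3$ (resp.\ $s_2=8$) with the subsequent ones zero, and the earlier characters are necessarily at least that large. This slip does not affect the viability of the argument, only the bookkeeping you should expect when you actually compute the tableau.
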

\begin{remark}
In the case $a_6=\textstyle{\frac{\partial}{\partial\omega^3}a_2+b_{203}}=0$ and $a_5=-\frac{1}{2}a_4$ the quotient space is equipped with a projective structure, i.e., $L_1=0$  and the invariant $K_1$ is identical to $K_1$ in \eqref{eq:K1-L1}. Moreover, such Cayley structures  depend on 4 functions of 3 variables. 
\end{remark}

\begin{remark}
  For a half-flat Cayley structure that is torsion-free, i.e., $a_1=0,$  the obstruction for ultra-half-flatness is $b_{203}=0,$ in which case the quotient space is endowed with a projective structure. Torsion-free ultra-half-flat Cayley structures  locally depend on 4 functions of 2 variables. Finally, one obtains that  torsion-free ultra-half-flat Cayley structures for which  the path geometry $(S_{\mathsf{uhf}},\ell_1,\ell_2)$ is projectively flat, locally depend on 2 functions of 2 variables.
\end{remark}
\begin{remark}
Using the  invariant flag \eqref{eq:invflag} and the structure equations \eqref{eq:str-eqns-cayley}, one obtains that  the distribution $\cF_3$ is integrable if $a_6=a_5=0.$ Consequently, one can determine   differential conditions under which the 3-dimensional leaves $\cF_3$ or the 3-dimensional leaf space of $\cF_1$ are equipped with certain geometric structure. It turns out that under certain conditions  these 3-dimensional spaces can be equipped with 3D path geometries. However, the conditions are rather laborious to obtain and more restrictive than ultra-half-flatness. We decided to skip discussing these additional considerations. 
\end{remark}

\section{Examples}\label{sec:examples}
In this section we present examples illustrating some of the results obtained so far.  
 
\subsection{Submaximal models and \emph{pp}-waves }    
\label{sec:cayley-analogue-pp}
The indefinite analogue of \emph{pp}-wave metrics in dimension four are given by $\omega^0\omega^3-\omega^1\omega^2$ where 
\begin{equation}
  \label{eq:pp-wave}
  \omega^0=\exd x^0,\quad \omega^1=\exd x^1- G(x^0,x^1,x^2,x^3)\exd x^2,\quad \omega^2=\exd x^2,\quad \omega^3=\exd x^3.\
\end{equation}
Among indefinite  conformal structures in dimension four, the submaximal model which has 9-dimensional algebra of infinitesimal symmetries corresponds to the conformal class of the half-flat \emph{pp}-wave for which $ G(x^0,x^1,x^2,x^3)=(x^3)^2$ (see \cite{KT}.) The corresponding pair of torsion-free ODEs for this conformal structure is $(z^1)''=({z^2}')^3$ and $(z^2)''=0$ (see \cite{CDT}).

Similarly, one can introduce Cayley structures of the form \eqref{eq:cayely} where $\omega^i$'s are as in \eqref{eq:pp-wave}. Following the coframe adaptation in Section \ref{sec:V-iso-triv}, the obstruction to half-flatness for such Cayley structures is found to be 
\[\mathbf{f}_0^2G_{x^1x^1}+\mathbf{u}^2G_{x^0x^0}-2\mathbf{uf}_0G_{x^0x^1},\]
where $\mathbf{f}_0$ and $\mathbf{u}$ are the parameters in the structure group \eqref{eq:StrGroup}. It is straightforward to verify that inequivalent classes of half-flat Cayley structures of  this form correspond to  
\[G=G_0(x^2,x^3)x^0+G_1(x^2,x^3)x^1+G_3(x^3)\]
for which $a_1=\frac{3\mathbf{f}_0G_1-7\mathbf{u}G_0}{\mathbf{f}_0^2\mathbf{f}_1^2}$ and $b_{123}\neq 0$ but too long to write down. 

Assuming  $G=G(x^3),$ one can check that the only non-vanishing quantity in \eqref{eq:str-eqns-cayley} is 
\[b_{123}=\textstyle{\frac{1}{\mathbf{f}_0^2\mathbf{f}_1^7}G''(x_3)}.\] 
As a result, such Cayley structures are ultra-half-flat and the induced 2D path geometries on the leaves and the leaf space of $\cF_2$ in \eqref{eq:invflag}  are flat. 

Using the twistor correspondence, one can start with an ultra-half-flat Cayley structure and determine the corresponding pair of 2nd order ODEs as discussed in  Example \ref{sec:an-ansatz-pairs}. However, for Cayley analogues of \emph{pp}-waves one can check the ODEs are given by
\begin{equation}
  \label{eq:pp-wave-ODE}
(z^1)''=z^2+f({z^2}'),\qquad {z^2}''=0.
\end{equation}
More explicitly,
the system \eqref{eq:pp-wave-ODE} can be solved explicitly to give
\[\textstyle{z^1(t;C) = \frac 16 C_3 t^3+\frac 12 (f(C_3)+C_2)t^2+C_1 t+C_0,\qquad z^2(t;C) = C_3t+C_2}\]
where we have used the notation $z(t;C)=(z^1(t;C),z^2(t;C))$ in order to take the initial conditions $C=(C_0,\ldots,C_3)$ into consideration. As explained in Section \ref{sec:twist-constr}, the 4D solution space, $M,$ can be identified by a section  $t=t_0$ of the jet space $J^2(\RR,\RR^2)$ and  $C_0,\ldots,C_3$ serve as its local coordinates.  Using the double fibration, a solution $z(t;C)$ corresponds to a surface in $M.$ Therefore, we can consider the 1-parameter family  of 2-planes that are tangent to a solution and pass through  a point $C\in M.$ Consequently, one obtains a curve $\gamma^*(t):= \exd_{C}z(t;C)\subset T^*_CM$ defined as
\[\textstyle{\exd_Cz^1(t;C)=\frac 16 \exd C_3 t^3+\frac 12 (f'(C_3)\exd C_3+\exd C_2)t^2+\exd C_3 t+\exd C_2,\quad \exd_Cz^2(t;C)=\exd C_3 t+\exd C_2}.\]
A simple calculation shows that this ruled surface is a Cayley cubic and $\omega^i$'s are given by \eqref{eq:pp-wave} where 
\[x^i=C_i,\ 0\leq i\leq 3,\quad \mathrm{and}\quad G(x^3)=\textstyle{\frac{1}{2}f'(x^3)}.\]
Assuming  $f(x^3)=(x^3)^n,$ it follows that when $n\in \{0,1,2\},$ the Cayley structure is flat and therefore has 8-dimensional algebra of infinitesimal symmetries. In the case of $n=3,$ the algebra of infinitesimal symmetries for the Cayley structure is 7-dimensional and when $n\in \RR\backslash\{0,1,2,3\}$ it has dimension 6. 

As a result, the Cayley structure  corresponding to $f(x^3)=(x^3)^3$ is  a submaximal Cayley structure. In fact, there are only two inequivalent Cayley structures that are submaximal. This can be shown rather straightforwardly in two different ways. One way is to view a Cayley structure as a reduced causal geometry defined on a 6D manifold i.e. its projectivized null cone bundle as explained in Section \ref{sec:cayl-isotr-flat}. In this case the structure group is comprised of two scaling parameters $\mathbf{f}_0$ and $\mathbf{f}_1.$  Alternatively, one can view Cayley structures defined on a 4-dimensional manifold whose structure group is 4-dimensional. For simplicity we will use the  first viewpoint. According to Theorem \ref{thm:half-flatness-full-1} and Remark \ref{rmk:fund-inv}, the fundamental invariants of a Cayley structure are $a_1$ and $b_{123}.$ The infinitesimal  action of the structure group on $a_1$  and $b_{123}$ is given by
\begin{equation}
  \label{eq:submax-1}
  \exd a_1\equiv (\phi_0+2\phi_1)a_1,\quad \exd b_{123}\equiv(2\phi_0+7\phi_1)b_{123}
\end{equation}
modulo $ \{\omega^0,\omega^1,\omega^2,\omega^3,\theta^1,\theta^2\},$ where, as mentioned in Section \ref{sec:structure-equations}, $\phi_i$ is the Maurer-Cartan form associated to the group parameter $\mathbf{f}_i$ for $i=0,1.$ Following Cartan's reduction procedure \cite{Gardner,Olver}, the infinitesimal group actions \eqref{eq:submax-1} imply that if $a_1$ and $b_{123}$ are both non-zero  then they can be both normalized to a constant simultaneously, and therefore the algebra of infinitesimal symmetries cannot be larger than 6-dimensional. As a result,  for the symmetry algebra to be 7-dimensional, one of these scalars has to vanish. The case $b_{123}=0$ is considered in Example \ref{sec:proj-struct-cohom} and results in two distinct non-flat models whose algebra of infinitesimal symmetries are of dimension 7 and 6. Here we  assume  $a_1=0,$ which by Remark \ref{rmk:fund-inv} implies torsion-free Cayley structures. In this case, one can normalize $b_{123}$ to a constant and reduce the structure bundle which results in the relation
$\phi_1\equiv -\frac 2 7\phi_0+P_i\omega^i+Q_a\theta^a$ for some functions $P_0,\ldots,P_3,Q_1,Q_2.$   
It turns out that  the scaling parameter $\mathbf{f}_0$ acts on all $P_i$'s and $Q_a$'s. Therefore, for a torsion-free Cayley structure with 7-dimensional symmetry algebra one has $\phi_1=-\frac 27\phi_0.$ This  means that  such Cayley structure is homogeneous and  uniquely defined by the conditions
\[a_1=0,\qquad\qquad \exd b_{123}=(2\phi_0+7\phi_1)b_{123},\]
which hold for the  Cayley structure given by ~\eqref{eq:pp-wave}. Because in this case $b_{123}\neq 0,$  the Fels curvature of the corresponding path geometry, given by \eqref{eq:FelsCurv-Cayley},  is nonzero. Furthermore, for all Cayley structures the Fels torsion of the corresponding path geometry, given by \eqref{eq:FelsTorsion-Cayley}, is always nonzero. Therefore, the induced path geometry on the twistor space of this Cayley structure  is neither torsion-free nor a projective structure.

\subsection{Projective structures corresponding to  Cayley structures}
\label{sec:proj-struct-cohom}
In Theorem \ref{thm:half-flatness-full-1} and Remark \ref{rmk:fund-inv}, it was discussed how  the vanishing of $a_1$ and $b_{123}$ imply flatness.  
In this section  the branch $b_{123}=0$ is investigated. By equation \eqref{eq:FelsCurv-Cayley}, if $b_{123}=0$ then the Fels curvature of the 3D path geometry on the twistor space is zero and therefore one obtains a projective structure on $\cT.$ Setting $b_{123}=0,$ and checking $\exd^2=0$ for the structure equations \eqref{eq:str-eqns-cayley} and the quantities $b_{ijk}$ and $a_i,$ and carrying out rather tedious but straightforward computations,  it follows that there are only \emph{three} inequivalent Cayley structures for which $b_{123}=0$. The first possibility is  the flat Cayley structure for which all quantities $b_{ijk}$ and $a_i$ in \eqref{eq:str-eqns-cayley} vanish. As was mentioned at the end of Section \ref{sec:cartan3d}, the flat Cayley structure corresponds to the Egorov projective structure \eqref{eq:Egorov-ODE-flatCayley} on the twistor space.

The second possibility is  when all quantities $b_{ijk}$ and $a_i$ vanish except $a_1$ satisfying the differential relation
\[\exd a_1=-2a_1^2\omega^2+a_1(\phi_0+2\phi_1).\]
Since there is a scaling action on $a_1,$ one can normalize $a_1=1$ which yields the reduction
\[\phi_1=\omega^2-\textstyle\frac 12\phi_0.\]
We can simplify the structure equations by defining
\begin{equation}
  \label{eq:coframe-zeta-hom}
  \zeta^0=\omega^0-\theta^1,\quad \zeta^1=3\omega^1-\theta^2,\quad \zeta^2=4\omega^2-\phi_0,\quad \zeta^3=\omega^3,\quad\zeta^4=\theta^1,\quad \zeta^5=\theta^2,\quad \zeta^6=\phi_0
\end{equation}
as a result of which
\begin{subequations}    \label{eq:hom-proj-str-zeta}
  \begin{align}
    \exd\zeta^0&=-\zeta^6\w\zeta^0 \label{eq:hom-proj-str-1-zeta}\\
    \exd\zeta^1&= \textstyle{-\frac 34\zeta^6\w\zeta^1+\zeta^1\w\zeta^2-3\zeta^4\w\zeta^3} \label{eq:hom-proj-str-2-zeta}\\
    \exd\zeta^2&=-\zeta^5\w\zeta^3\label{eq:hom-proj-str-3-zeta}\\
    \exd\zeta^3&=-2\zeta^2\w\zeta^3\label{eq:hom-proj-str-4-zeta},\\
    \exd\zeta^4&= -\zeta^5\w\zeta^2\textstyle{-\frac 34\zeta^6\w\zeta^4-\frac 13\zeta^5\w\zeta^1} \label{eq:hom-proj-str-5-zeta},\\
    \exd\zeta^5&=-2\zeta^5\w\zeta^2\label{eq:hom-proj-str-6-zeta}\\
    \exd\zeta^6&=0,\label{eq:hom-proj-str-7-zeta}
  \end{align}
\end{subequations}
Now one can use Darboux's theorem to express $\zeta^i$'s in terms of some coordinate system. In particular, \eqref{eq:hom-proj-str-7-zeta} implies $\zeta^6=\exd x^6$ and \eqref{eq:hom-proj-str-zeta} gives $\zeta^0=e^{-x^6}\exd x^0.$ It follows from \eqref{eq:hom-proj-str-3-zeta}, \eqref{eq:hom-proj-str-4-zeta}   and \eqref{eq:hom-proj-str-6-zeta} that 
\[\zeta^2=-\exd x^2+\textstyle{\frac{\partial}{\partial x^3}f(x^3,x^5)\exd x^3,\qquad \zeta^3=e^{2x^2}\exd x^3,\qquad\zeta^2=e^{-2x^2+2f(x^3,x^5)}\exd x^5},\]
where $f(x^3,x^5)$ satisfies Liouville's equation
\[\textstyle{\frac{\partial^2}{\partial x^3\partial x^5}f(x^3,x^5)+e^{2f(x^3,x^5)}=0}.\]
The solutions of Liouville's equation can be expressed as \cite{Henrici}
\begin{equation}
  \label{eq:Liouville-solution}
  f(x^3,x^5)=\mathrm{ln}\left(\frac{\sqrt{p'(x^3)q'(x^5)}}{p(x^3)-q(x^5)}\right)
\end{equation}
for two arbitrary functions $p(x^3)$ and $q(x^5).$ A choice of $p(x^3)$ and $q(x^5)$ will be introduced later in such a way that it will simplify the expressions for $\zeta^1$ and $\zeta^4.$ Writing
\[\zeta^1=e^{-\frac 34 x^6}\left(A_1\exd x^1+A_2\exd x^2+A_3\exd x^3\right)\]
for  functions $A_i=A_i(x^1,x^2,x^3,x^4),i=1,2,3,$ it follows from  \eqref{eq:hom-proj-str-2-zeta} that 
\[A_1=x^2+B_1(x^1,x^3),\quad A_2=B_2(x^2,x^3)\quad \zeta^4=F_1\exd x^1+F_2\exd x^2+F_3\exd x^3+F_4\exd x^4\]
where the functions $F_1,F_2,F_4$ are uniquely expressed in terms of previously specified functions and $F_4=e^{-\frac 34 x^6}E_4(x^1,x^2,x^3,x^4,x^5).$   
The only remaining task is to  impose conditions obtained from   \eqref{eq:hom-proj-str-5-zeta} which in particular  imply  $p(x^3)=x^3.$ We will not illustrate these conditions which involve elementary PDEs. Solving the resulting PDEs and using the relations \eqref{eq:coframe-zeta-hom}  the Cayley structure is given by the coframe  
\begin{equation}
  \label{eq:Cayley-homo-coframe}
\begin{aligned}
  \omega^0 &= \textstyle{\exd x^0-\frac 13 e^{-x^3-x^2}\exd x^1+\frac 13 e^{-2x^2}(\exd x^2-\exd x^3)},\qquad   &\omega^2 &= -\exd x^2-\exd x^3\\
 \omega^1 &= \textstyle{\frac 13 e^{x^2}\exd x^1+\frac 13 e^{x^3}\exd x^3},\qquad &\omega^3 &= e^{2x^2+x^3}\exd x^3
\end{aligned}
\end{equation}
This example is the second submaximal Cayley structure. Note that its corresponding projective structure on the twistor space is also homogeneous. In fact, one can start by a projective structure and reduce the structure equations and find the conditions which imply that it is equivalent to this submaximal model. These conditions involve many PDEs and solving them in general is not easy. But in this case one can find that the corresponding point equivalence class of pair of ODEs can be presented as
\begin{equation}
  \label{eq:submaximal-projstr-CayleyExa1}
{z^1}''=tz^2{z^2}',\qquad {z^2}''=0.
  \end{equation}
Solving this pair of ODEs gives
\[z^1(t;C)=\textstyle{\frac 1{12}C_3^2 t^4+\frac 16C_3C_2t^3+C_1t+C_0},\qquad z^2(t;C)=C_3t+C_2.\]
As a result, following the previous example, one obtains that the Cayley structure is given by the cubic \eqref{eq:cayely} where
\begin{equation}
  \label{eq:coframe-Cayley-hom-simpler}
  \omega^0= \exd x^0,\qquad \omega^1=-\textstyle{(\frac{3}{2x_2})^{\frac 14}\exd x^1},\qquad \omega^2=-\exd x^2,\qquad \omega^3=\exd x^3,
\end{equation}
with $x^0=C_0,x^1=C_1,x^3=C_3$ and $x^2=-3(2)^{-\frac 73} (C_2)^{\frac 43}.$ Comparing with \eqref{eq:Cayley-homo-coframe}, the coframe above has a  much simpler form. Naturally, one can use the coframe \eqref{eq:Cayley-homo-coframe} together with the appropriate structure group element and change of coordinates to obtain \eqref{eq:coframe-Cayley-hom-simpler} which we will leave to the interested reader. 

The third possibility when $b_{123}=0$ turns out to be when  $a_1$ and $a_2$ are nonzero satisfying the differential relations
\[\exd a_1=-2a_2^2\omega^0+3a_1a_2\omega^1-2a_1^2\omega^2+a_1(\phi_0+2\phi_1)-a_2\theta^2,\qquad \exd a_2=a_2^2\omega^1-a_1a_2\omega^2+a_2(\phi_0+\phi_1)\]
with the only  nonzero quantities in \eqref{eq:str-eqns-cayley} being   
\[a_3=a_2,\qquad b_{101}=a_1a_2,\qquad b_{201}=-2a_2^2,\qquad b_{202}=2a_1a_2,\qquad b_{403}=a_1a_2.\] 
In this case, one imposes the normalization $a_2=1$ which implies the reduction
\begin{equation}
  \label{eq:red1-hom-proj}
  \phi_1=-\phi_0-\omega^1+a_1\omega^2.
    \end{equation}
Furthermore, one can translate $a_1$ to zero which, using the expression for $\exd a_1,$ results in the additional reduction
\begin{equation}
  \label{eq:red2-hom-proj}
  \theta^2=-2\omega^0.
  \end{equation}
Replacing the values $a_2=1,a_1=0$ together with \eqref{eq:red1-hom-proj} and \eqref{eq:red2-hom-proj} into the structure equations one obtains a set of six structure equations which is homogeneous. Defining
\[\zeta^0=\omega^0,\quad \zeta^1=\omega^1-\phi_0,\quad \zeta^2=\omega^0,\quad \zeta^3=\omega^3,\quad \zeta^4=\theta^1,\quad \zeta^5=2\omega^1+\phi_0,\] 
it follows that 
 \begin{subequations}    \label{eq:cohom-proj-str-eqns}
  \begin{align}
    \exd\zeta^0&=-\zeta^5\w\zeta^0-\zeta^4\w\zeta^2\label{eq:cohom-proj-str-1}\\
    \exd\zeta^1&=6\zeta^0\w\zeta^2\label{eq:cohom-proj-str-2}\\
    \exd\zeta^2&=\zeta^5\w\zeta^2+3\zeta^0\w\zeta^3\label{eq:cohom-proj-str-3}\\
    \exd\zeta^3&=2\zeta^5\w\zeta^3\label{eq:cohom-proj-str-4}\\
    \exd\zeta^4&=-2\zeta^5\w\zeta^4\label{eq:cohom-proj-str-5}\\
    \exd\zeta^5&=3\zeta^3\w\zeta^4\label{eq:cohom-proj-str-6}
  \end{align}
\end{subequations}
Similar to the previous case, one uses Darboux's theorem to express the coframe in terms of a  coordinate system. Note that in this case Liouville's equation also appears as a result of  \eqref{eq:cohom-proj-str-4}, \eqref{eq:cohom-proj-str-5} and \eqref{eq:cohom-proj-str-6}. A choice of coframe for this Cayley structure can be expressed as
 \[
      \begin{aligned}
        \omega^0 &= \exd x^0 +\exd x^2+2\exd x^3,\\
          \omega^1 &=\textstyle{ -6\frac{2(x^0)^2x^2-(x^0)^2x^3-2x^2(x^3)^2}{x^2(x^0)^2x^3}\exd x^0 +\frac{1}{3}\exd x^1 -6\frac{2x^0(x^2)^2-2x^0(x^3)^2-(x^2)^2x^3}{(x^2)^2x^3x^0}\exd x^2},\\
          \omega^2 &= \textstyle{3\frac{(x^0)^2+(x^3)^2}{(x^0x^3)^2}\exd x^0 +3\frac{(x^2)^2+(x^3)^2}{(x^2x^3)^2}\exd x^2},\\
        \omega^3 &= \textstyle{\frac{2}{(x^3)^3}\exd x^3}
\end{aligned}
\]
We did not succeed in solving the PDEs required to obtain a pair of ODEs that corresponds to this Cayley structure. We point out that although the Cayley structure arising from \eqref{eq:cohom-proj-str-eqns} is homogeneous with 6-dimensional algebra of infinitesimal symmetries, the corresponding   projective structure induced on its twistor space is not homogeneous but instead has cohomogeneity one. This is a result of the reduction \eqref{eq:red2-hom-proj}. Recall from \eqref{eq:sl4-conn-horiz-forms} that the vector field $\frac{\partial}{\partial \theta^2}$ is tangent to the curves of the path geometry. Hence  reducing $\theta^2$ amounts to taking a hypersurface in $J$ that is transversal to the paths.
As a result, the 6-dimensional symmetry group foliates  $J$ by  orbits of codimension one  which are transversal to the 1-dimensional foliation induced by the  path geometry.

\subsection{A diagonal ansatz}\label{ex:diagonal}
Consider a Cayley structure for which 
\begin{equation}\label{eq:ex-coframe}
\omega^0=f^0\exd x^0,\quad \omega^1=f^1\exd x^1,\quad \omega^2=f^2\exd x^2,\quad \omega^3=f^3\exd x^3
\end{equation}
for the functions $f^i=f^i(x^0,x^1,x^2,x^3),i=0,1,2,3.$  Straightforward computation shows that such Cayley structure is half-flat if
\[f^i=f^i(x^2,x^3),\quad i=0,1,2,3.\]
Imposing the structure equations \eqref{eq:str-eqns-cayley}, one obtains that $a_2=a_3=\cdots=a_6=0$ and
\[\textstyle{a_1= \frac{1}{\mathbf{f}_0\mathbf{f}^2_1f^0f^1f^2f^3}(f^0f^1f^3_{x^2}-3f^0f^3f^1_{x^2}+2f^1f^3f^0_{x^2})}\]
where $\mathbf{f}_0,\mathbf{f}_1$ are parameters in the structure group \eqref{eq:StrGroup}.
The expression for $b_{123}$ is too long to express here and  depends on the 2nd jet of $f^i$'s. One can check that such Cayley structures are ultra-half-flat and the induced projective structure on the leaves is flat. However, the induced path geometry on the 2-dimensional leaf space is always a projective structure and not necessarily flat. We do not express the curvature, $K_1$ of this projective structure here. Considering the case $f^0=f^1=f^2=1$, one can see that if $f^3(x^2,x^3)=h^1(x^2)h^2(x^3)$ then $K_1=0.$   For such choice of $f^i$'s the Cayley structure is not flat and
\[\textstyle{a_1=\frac{h^1_{x^2}}{h^1\mathbf{f}_0\mathbf{f}_1^2},\qquad b_{123}=-\frac{\mathbf{u}((h^1_{x^2})^2+h^1h^1_{x^2x^2})}{\mathbf{f}_0^5\mathbf{f}_1^2(h^1)^2}}\]

Furthermore, if we only set $f^3=1$ then according to equation \eqref{eq:RulingFomrs} the ruling planes of the Cayley cubic are spanned by
\[
\gamma_1(u)=\textstyle{\left[\frac{1}{f^1}\partial_1+u \frac{1}{f^0}\partial_0\right]},\qquad
\gamma_2(u)=\textstyle{\left[\partial_3+u \frac{1}{f^2}\partial_2+\frac{1}{3}u^2\frac{1}{f^1}\partial_1\right]}
\]
It follows that the null foliation is given by $x^2=\const$, $x^3=\const$ and is spanned by $\gamma_1(u)$ with $u\in\R$. Moreover, we are able to find the lift of the ruling planes to $TJ$ in terms of the integrable distribution $\cD$ which is dual to the Pfaffian system $I_{\mathsf{hf}}$. They are spanned by the following vector fields
\[
V_1=\textstyle{\frac{1}{f^1}\partial_1-\lambda \frac{1}{f^0}\partial_0,}\qquad V_2=\textstyle{\partial_3-\lambda \frac{1}{f^2}\partial_2+\frac{1}{3}\lambda^2\frac{1}{f^1}\partial_1+\lambda\left(\frac{v(f^0)}{f^0}-\frac{v(f^1)}{f^1}\right)\partial_\lambda,}
\]
where $\lambda$ is a projective parameter on the fibers of $J\to M$ and we denote $v=\partial_3-\lambda \frac{1}{f^2}\partial_2$. Furthermore, the path geometry on the leaf space is defined by the integral curves of $V_2$ (recall that all functions $f^i$ depend on $x^2$ and $x^3$ only) and corresponds to the point equivalence class of the ODE
\begin{equation}
  \label{eq:diagonal-ansatz-ODE}
\textstyle{\frac{\exd^2x^2}{(\exd x^3)^2} =\frac{\exd x^2}{\exd x^3} \partial_3\ln\left(\frac{f^0}{f^1f^2}\right)+\left(\frac{\exd x^2}{\exd x^3}\right)^2 \partial_2\ln\left(\frac{f^0}{f^1f^2}\right)}.
\end{equation}
It is a polynomial of second order in $\frac{\exd x^2}{\exd x^3}$ which confirms the fact that the path geometry is a projective structure.
Note that for $V_2$ the vertical component $\left(\frac{v(f^0)}{f^0}-\frac{v(f^1)}{f^1}\right)\partial_\lambda$ is zero if and only if $f^0$ and $f^1$ are proportional in which case the two-dimensional twistor space of the projective structure fibers over $\PP^1$.  This is equivalent to the fact that the projective structure is defined by a Veronese web or, equivalently,  by a Chern connection of a 3-web,  which implies that this projective structure can be represented by a  connection  whose  Ricci tensor is skew-symmetric
(see \cite{DW1,K-webs}).
 
\subsection{An ansatz for pairs of ODEs} \label{sec:an-ansatz-pairs}
Finding a system of second order ODEs from a given Cayley structure yields a solution to a linear system of PDEs as explained in Section \ref{sec:3d-path-geometry}. Namely one looks for functions that are constant along $\ell_1=\cD=I_{\mathsf{hf}}^\perp$ (so-called twistor functions) which constitute coordinates on the twistor space $\cT$, i.e. the space where the 3-dimensional path geometry is defined. The twistor functions accompanied with coordinates on the integral manifolds of $I_{\mathsf{hf}}$ give new system of coordinates on $J$  using which it can be viewed as $J^1(\mathbb{R},\mathbb{R}^2)$. In the new coordinate system, the vertical (with respect to the projection to $M$) vector field $\partial_\lambda$ on $J$  takes the form of a total derivative vector field, from which a system of ODEs can be derived. Unfortunately one cannot give a general formula as a result of this procedure. However one can easily notice that under the diagonal ansatz of the previous section the resulting system involves one dependent function and its derivative only, i.e. it is of the form
\begin{subequations}\label{ex:ode}
  \begin{align}
    (z^1)''=F^1(t,z^2,{z^2}'),\label{eq:ODE-ansatz-exa-1}\\
 (z^2)''=F^2(t,z^2,{z^2}') \label{eq:ODE-asnsatz-exa-2}
  \end{align}
\end{subequations}
To illustrate this fact consider a general systems of the form \eqref{ex:ode}. Our aim is to apply Theorem \ref{thm1a} in this case. There are two non-trivial components of the torsion $\mathbf{T}$
\[
T^2_2=\textstyle{-\partial_{z^2}F^2+\frac{1}{2}\tilde X(\partial_{p^2}F^2)-\frac{1}{4}(\partial_{p^2}F^2)^2,}
\]
and
with
\[
T^1_2=\textstyle{-\partial_{z^2}F^1+\frac{1}{2}\tilde X(\partial_{p^2}F^1)-\frac{1}{4}\partial_{p^2}F^1\partial_{p^2}F^2}
\]
where 
\begin{equation}
  \label{eq:xtilde-def}
 \tilde X=\partial_t+p^2\partial_{z^2}+F^2\partial_{p^2} 
\end{equation}
is a part of the total derivative vector field involving coordinates that appear on the right hand side of \eqref{ex:ode} only. 
Theorem \ref{thm1a} gives two conditions for a pair of ODEs that corresponds to a Cayley structure which are 
\[
T^2_2=0,\qquad  \textstyle{\tilde X(\phi)=\frac{1}{4}\phi^2,}
\]
where $\phi$ is given by
\[
\textstyle{\phi=\frac{\tilde X(T^1_2)}{T^1_2}+\frac{1}{2}\partial_{p^2}F^2}
\]
The first condition, $T^2_2=0$, ensures that $\mathbf{T}$ is of rank 1 in this case. Note that $T^2_2$ is a curvature invariant of a spray associated to the ODE \eqref{eq:ODE-asnsatz-exa-2}.  Vanishing of $T^2_2$ implies that \eqref{eq:ODE-asnsatz-exa-2} defines 2-dimensional projective structure on its solution space with a skew-symmetric Ricci tensor  \cite{K-webs}. Using the notion of  duality for path geometries in dimension two (the duality for second order ODEs is defined in terms of $\ell_1$ and $\ell_2$ by swapping the role of the two line fields), it follows that \eqref{eq:ODE-asnsatz-exa-2}  is the dual ODE to  \eqref{eq:diagonal-ansatz-ODE}. Let us stress that $T^2_2$ is not a point invariant of \eqref{eq:ODE-asnsatz-exa-2}.
\begin{proposition}\label{ex:thm} 
For any function $F^2=F^2(t,z^2,p^2)$ satisfying $T^2_2=0$ there exists a function $F^1=F^1(t,z^2,p^2)$ such that the system \eqref{ex:ode} defines a Cayley structure on the solution space. 
\end{proposition}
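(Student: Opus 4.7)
The plan is to work within the ansatz \eqref{ex:ode} and invoke Theorem \ref{thm1a} directly. As the computation preceding Proposition \ref{ex:thm} already establishes, under this ansatz the three conditions of Theorem \ref{thm1a} collapse to two: $\mathrm{rank}\,\mathbf{T}=1$ (equivalent to $T^1_2\neq 0$, since $T^2_1=0$ automatically from the ansatz and $T^2_2=T^1_1$ is killed by the hypothesis $T^2_2=0$), and the Schwarzian-Riccati equation $\tilde X(\phi)=\tfrac14\phi^2$ with $\phi=\tfrac{\tilde X(T^1_2)}{T^1_2}+\tfrac12\partial_{p^2}F^2$. The strategy is to split the construction into two successive steps: first prescribe a valid $T^1_2$, then recover $F^1$ from it.

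The first step exploits the obvious solution $\phi\equiv 0$ of the Riccati equation, which converts the Schwarzian condition into the linear transport equation
\[
\tilde X(\ln T^1_2)=-\tfrac12\partial_{p^2}F^2
\]
along the flow of $\tilde X$. Since $\tilde X$ is a smooth vector field on the $(t,z^2,p^2)$-space, this is solvable by the method of characteristics: choose any nowhere-zero smooth initial datum for $T^1_2$ on a hypersurface transverse to $\tilde X$ and integrate $-\tfrac12\partial_{p^2}F^2$ along the flow lines to obtain a nowhere-zero function $W(t,z^2,p^2)$ satisfying the transport equation on a flowbox.

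The second step is to solve
\[
-\partial_{z^2}F^1+\tfrac12\tilde X(\partial_{p^2}F^1)-\tfrac14\partial_{p^2}F^1\cdot\partial_{p^2}F^2=W
\]
for $F^1$. This is a single linear second-order PDE in three independent variables, with principal symbol $\tfrac12\xi_{p^2}\bigl(\xi_t+p^2\xi_{z^2}+F^2\xi_{p^2}\bigr)$; its two characteristic families are the level hypersurfaces of $p^2$ and the hypersurfaces annihilated by $\tilde X$. On the open set where $F^2\neq 0$, the equation admits the normal form $\partial_{p^2}^2 F^1 = (\text{lower order})$, so the Cauchy problem with $F^1$ and $\partial_{p^2}F^1$ prescribed on $\{p^2=p^2_0\}$ is well-posed by Cauchy-Kovalevskaya in the analytic category, and in the smooth category it may be solved by reducing to a coupled transport system along $\partial_{p^2}$ and $\tilde X$ via $v:=\partial_{p^2}F^1$ and Picard iteration. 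By construction $T^1_2=W\neq 0$, so all hypotheses of Theorem \ref{thm1a} hold for the resulting pair $(F^1,F^2)$.

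The main obstacle I anticipate is the reconstruction in the second step: the PDE is of mixed type with two distinct characteristic directions, and one must verify that the chosen Cauchy data produces a solution consistent with the ansatz (which is automatic here, since all coefficients depend only on $(t,z^2,p^2)$, so no $z^1,p^1$ dependence enters). The ample freedom in the transport equation of Step 1 and in the Cauchy data of Step 2 confirms that $F^1$ is far from being determined by $F^2$, giving infinitely many Cayley structures associated to each valid $F^2$.
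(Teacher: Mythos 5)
Your proof is correct and follows essentially the same route as the paper's: both reduce the problem to choosing a solution $\phi$ of the Riccati equation $\tilde X(\phi)=\frac14\phi^2$, solving the resulting transport equation along $\tilde X$ for a nowhere-zero prescribed value of $T^1_2$, and finally solving the second-order linear PDE $T^1_2=W$ for $F^1$. You merely specialize to the particular solution $\phi\equiv 0$ and supply more detail on the solvability of the final PDE than the paper does; both are fine for the existence claim.
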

\begin{proof}
Assume that $F^2$ satisfies $T^2_2=0.$ Choose a function $\phi$ such that $\tilde X(\phi)=\frac{1}{4}\phi^2$ where $\tilde X$ is defined in \eqref{eq:xtilde-def}. Then we can find a function $\tau$  such that $\phi=\frac{\tilde X(\tau)}{\tau}+\frac{1}{2}\partial_{p^2}F^2$ holds. Note that $\phi$ and $\tau$ are uniquely determined by initial conditions given by functions of two variables. Lastly, the second order PDE $\tau=T^1_2$ can be used to determined a choice of $F^1=F^1(t,z^2,p^2)$.
\end{proof}

We would like to utilize Proposition \ref{ex:thm} and write down explicit examples of pairs of ODEs that define Cayley structures on their solution space. We have already seen the example  \eqref{eq:pp-wave-ODE} in which $F^1(t,z^2,p^2)=z^2+f(p^2)$ and $F^2=0$.   Let us assume  $F^2=0$ and look for a choice of $F^1$ that is of the form 
$$
F^1(t,z^2,p^2)=(z^2)^k(p^2)^\ell.
$$
It follows that 
$$
T^1_2=\textstyle{k(\frac{1}{2}\ell-1)(z^2)^{k-1}({z^2}')^\ell.}
$$
and it is non-zero provided that $k\neq 0$ and $\ell\neq 2$. One obtains that
$$
\textstyle{\phi=\frac{X(T^1_2)}{T^1_2}=(k -1)\frac{z^2}{{z^2}'}}
$$
which satisfies $X(\phi)-\frac{1}{4}\phi^2=0$ if and only if $k=-3$ or $k=1$. Therefore, we have the following.
\begin{proposition}\label{propex3}
For any $\ell\neq 2$ the point equivalence class of pairs of ODEs
\begin{equation}
  \label{eq:exa-1-ode}
 {z^1}''=z^2({z^2}')^\ell,\qquad {z^2}''=0, 
\end{equation}
and
 \begin{equation}
   \label{eq:exa-2-ode}
 {z^1}''=(z^2)^{-3}({z^2}')^\ell,\qquad {z^2}''=0,
 \end{equation}
define Cayley structures on their solution spaces. If $\ell=2$ then the pair of ODEs is torsion-free and corresponds to a conformal structure.
\end{proposition}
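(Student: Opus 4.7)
The plan is to apply Theorem \ref{thm1a} directly, using the setup already established in Proposition \ref{ex:thm} and the preceding paragraph, where the authors have laid out the essential computation for the ansatz $F^2 = 0$, $F^1 = (z^2)^k(p^2)^\ell$. Since both ODE systems in the statement fall under this ansatz, the heart of the argument is to verify the three conditions in \eqref{conditions} of Theorem \ref{thm1a} for the two specific values $k = 1$ and $k = -3$, and then separately handle the degenerate case $\ell = 2$.

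First I would observe that with $F^2 \equiv 0$ the formulas in \eqref{eq:fels-torsion-explicit-components} immediately give $T^2_2 = 0$ (all relevant derivatives of $F^2$ vanish), which secures the rank condition $\mathrm{rank}\,\mathbf{T} \leq 1$. A direct computation of $T^1_2$ using the reduced total derivative $\tilde X = \partial_t + p^2\partial_{z^2}$ yields $T^1_2 = k(\tfrac{\ell}{2}-1)(z^2)^{k-1}(p^2)^\ell$, which is nonzero precisely when $\ell \neq 2$ and $k \neq 0$; hence for $k \in \{-3, 1\}$ and $\ell \neq 2$ the rank is exactly one. Next I would verify the remaining two conditions simultaneously: the middle condition $\nabla \mathbf{T} \in [\mathbf{T}]$ holds tautologically once $\phi$ is defined as $\phi = \tilde X(T^1_2)/T^1_2$, and an explicit computation gives $\phi = (k-1)z^2/p^2$. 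Substituting into $\tilde X(\phi) - \tfrac{1}{4}\phi^2 = 0$ (using $\mathrm{tr}\,\mathbf{F} = 0$ in this ansatz) reduces to the purely algebraic condition $(k-1) - \tfrac{1}{4}(k-1)^2 = 0$, whose roots are exactly $k = 1$ and $k = -3$. This confirms that both specified pairs of ODEs satisfy all hypotheses of Theorem \ref{thm1a} and hence define Cayley structures on their solution spaces.

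For the last claim, when $\ell = 2$ the factor $\tfrac{\ell}{2} - 1$ in the expression for $T^1_2$ vanishes, so the entire torsion matrix $\mathbf{T}$ is identically zero. By the discussion in Section \ref{sec:3d-path-geometries}, a 3D path geometry with $\mathbf{T} = 0$ descends to a half-flat indefinite conformal structure on the 4D solution space, proving the final assertion. I do not expect any serious obstacle here since the key computational identity $\tilde X(\phi) = \tfrac{1}{4}\phi^2 \Leftrightarrow k \in \{1, -3\}$ has essentially already been verified in the paragraph preceding the proposition; the only care required is to confirm that the middle condition of \eqref{conditions} is automatic once $\phi$ is defined as indicated, and that no hidden dependence on $p^1, z^1$ creeps in (which is transparent from $F^1, F^2$ being independent of these variables).
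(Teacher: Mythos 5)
Your overall strategy is exactly the paper's: the proposition is proved by the computation in the paragraph immediately preceding it, namely checking the conditions of Theorem \ref{thm1a} for the ansatz $F^2=0$, $F^1=(z^2)^k(p^2)^\ell$, and your treatment of the rank condition, of the middle condition (which indeed reduces to the definition of $\phi$ because for this ansatz $[\mathbf{H},\mathbf{T}]$ contributes nothing off the $T^1_2$-slot), and of the $\ell=2$ case all match the paper.

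However, there is a concrete computational error in the key step, and as written your argument does not establish the case $k=-3$, i.e.\ the second family \eqref{eq:exa-2-ode}. From $T^1_2=k(\tfrac{\ell}{2}-1)(z^2)^{k-1}(p^2)^\ell$ and $\tilde X=\partial_t+p^2\partial_{z^2}$ one gets $\tilde X(T^1_2)=(k-1)\tfrac{p^2}{z^2}\,T^1_2$, so $\phi=(k-1)\tfrac{p^2}{z^2}$, not $(k-1)\tfrac{z^2}{p^2}$ (the paper's displayed $\phi$ contains the same inverted fraction, which is a typo there). With your $\phi$, $\tilde X(\phi)=k-1$ is constant while $\tfrac14\phi^2$ is not, so the condition $\tilde X(\phi)=\tfrac14\phi^2$ cannot reduce to a purely algebraic equation in $k$; and the equation you then write, $(k-1)-\tfrac14(k-1)^2=0$, has roots $k=1$ and $k=5$, not $k=1$ and $k=-3$. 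The correct computation is
\begin{equation*}
\tilde X(\phi)-\tfrac14\phi^2=-\Bigl[(k-1)+\tfrac14(k-1)^2\Bigr]\frac{(p^2)^2}{(z^2)^2}
=-\tfrac14(k-1)(k+3)\,\frac{(p^2)^2}{(z^2)^2},
\end{equation*}
which vanishes precisely for $k=1$ or $k=-3$, recovering both families. With this two-line fix your proof is complete and coincides with the paper's.
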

One can easily solve the ODEs in Proposition \ref{propex3} explicitly and find the corresponding Cayley structures.  The general solution of \eqref{eq:exa-1-ode} is given by 
\[z^1(t;C) = \textstyle{C_3^\ell(\frac 16C_3t^3+\frac 12 C_2t^2)+C_1t+C_0},\qquad  z^2(t;C) = C_3t+C_2.\]
  Following Example \ref{sec:cayley-analogue-pp}, one obtains that the solution space is equipped with the cubic
\[\textstyle{\frac{\ell}{2} (x^3)^{\ell-1} x^2 (\exd x^2)^2 \exd x^3+\frac 16 (2-\ell) (x^3)^\ell (\exd x^2)^3+\exd x^0 (\exd x^3)^2-\exd x^1 \exd x^2 \exd x^3}\]
where $x^i=C_i$ for $i=0,\ldots,3.$ This cubic can be expressed as \eqref{eq:cayely} where 
\begin{equation}
  \label{eq:cof-exa-1}
\begin{aligned}
  \omega^0&=\exd x^0,\quad &\omega^1&=\textstyle{\left(\frac{2}{(2-\ell)(x^3)^\ell}\right)^{\frac 13}\left(\exd x^1-\frac 12 x^2(x^3)^{\ell-1}\exd x^2\right)}\\
  \omega^2&=\textstyle{\left(\frac{(2-\ell)(x^3)^\ell}{2}\right)^{\frac 13} \exd x^2},\quad &\omega^3&=\exd x^3. 
\end{aligned}
\end{equation}
The general solution of \eqref{eq:exa-2-ode} can be expressed as
\[z^1(t;C)=\textstyle{\frac 12 \frac{C_2^{\ell-2}}{C_2 t+C_3}+C_0 t+C_1},\qquad z^2(t;C)=C_2 t+C_3\] 
As a result, the solution space is equipped with a field of cubics given by 
\begin{equation*}
  \textstyle{x^3 \exd x^1 (\exd x^2)^2-x^3 \exd x^0 \exd x^3 \exd x^2-x^2 \exd x^1 \exd x^3 \exd x^2+x^2 \exd x^0 (\exd x^3)^2-\frac 12 (x^2)^{\ell-3} (\exd x^2)^3 (2-\ell)} 
\end{equation*}
where $x^i=C_i$ for $i=0,\ldots,3.$ This cubic can be put in the form  \eqref{eq:cayely}  by defining 
\begin{equation}
  \label{eq:cof-exa-2}
\begin{aligned}
  \omega^0&=\textstyle{\frac{1}{x^2}\exd x^0},\quad &\omega^1&=2\left({3\left(\ell-2\right)\left(x^2\right)^{\ell}}\right)^{-\frac 13}\left(x^2\exd x^1-x^3\exd x^0\right)\\
  \omega^2&=\textstyle{\frac 12 \left(12(\ell-2)(x^2)^{\ell-3}\right)^\frac{1}{3}\exd x^2},\quad &\omega^3&=x^2\exd x^3-x^3\exd x^2. 
\end{aligned}  
\end{equation}
Following the procedure explained in Section \ref{sec:half-flatness-full} and using the coframe  \eqref{eq:cof-exa-1}, one finds that  $a_1$ and $b_{123}$ for Example \eqref{eq:exa-1-ode} are  
\begin{equation}
  \label{eq:a1b123-exa41}
a_1=0,\qquad b_{123}=\textstyle{\frac{2\ell(\ell^2-3\ell+2)}{\mathbf{f}_0^2\mathbf{f_1}^7(8-4\ell)^{\frac 23}}x^2(x^3)^{\frac{\ell}{3} -3}-\frac{\mathbf{u}}{\mathbf{f}_0^3\mathbf{f}_1^7}\frac{\ell(\ell-1)}{(x^3)^2}}
\end{equation}
and for Example \eqref{eq:exa-2-ode}, using the coframe \eqref{eq:cof-exa-2}, are
\begin{equation}
  \label{eq:a1b123-exa42}
a_1=\textstyle{\frac{1}{\mathbf{f}_0\mathbf{f}_1^2}\left(\frac 32 (x^2)^\ell(\ell-2)\right)^{-\frac 13}(\ell-1)},\qquad b_{123}=\textstyle{-\frac 13\left(\frac{2}{3(\ell-2)}\right)^{\frac 23}\ell(\ell-1)\frac{\mathbf{u}^3}{\mathbf{f}_0^5\mathbf{f}_1^7}(x^2)^{-\frac{2\ell}{3}}}.
\end{equation}
The expression for $b_{123}$ in \eqref{eq:a1b123-exa42} suggests that when $\ell=0$ the pair of ODEs \eqref{eq:exa-2-ode} define a projective structure. This pair of ODEs turn out to correspond to the submaximal Cayley structure obtained in Section \ref{sec:proj-struct-cohom}. In other words, the  pair of ODEs \eqref{eq:exa-2-ode} for $\ell=0$ and \eqref{eq:submaximal-projstr-CayleyExa1} are    point equivalent. 
Lastly, we point out that the algebra of infinitesimal symmetries for Examples  \eqref{eq:exa-1-ode} and \eqref{eq:exa-2-ode} are 6-dimensional for  $\ell\in\mathbb R\slash\{0,1,2\}$. Moreover, they are ultra-half-flat and the path geometries induced on the leaves and on the leaf space are both flat.

\section{Dispersionless Lax pairs}\label{sec:disp-lax-pairs}
The purpose of this section is to put a half-flat Cayley structure in a preferred coordinate system and describe the half-flatness condition in terms of a dispersionless integrable system. We  will show that  the  geometry induced by characteristic variety of the linearization of this integrable system is a Cayley structure. This can be viewed as a generalization of the results obtained in \cite{FK} for conformal structures.  Finally, we will present an alternative way of viewing the 2D path geometries appearing in Section \ref{sec:addit-geom-struct}.

\subsection{A normal form}
Using the correspondence between pairs of second order ODEs and  half-flat Cayley structures  as described in Theorem \ref{thm1a},  we find a convenient coframing on $M$ which will be used  to find a Lax pair.
\begin{theorem}\label{propNormalform} 
For a half-flat Cayley structure given by $\rho$ in \eqref{eq:cayely}, there are local coordinates $(x^0,x^1,x^2,x^3)$ with respect to which
\[
\begin{aligned}
&\omega^0=\emph{\exd} x^0\\
&\omega^1=\emph{\exd} x^1-(\partial_1F^2\emph\exd x^0+\partial_3F^2\emph\exd x^2)\\
&\omega^2=\emph\exd x^2-E\emph\exd x^0\\
&\omega^3=\emph\exd x^3-E\emph\exd x^1 -(\partial_1F^1\emph\exd x^0+\partial_3F^1\emph\exd x^2) +E(\partial_1F^2\emph \exd x^0+\partial_3F^2\emph\exd x^2)
\end{aligned}
\]
for some functions $E,F^1,F^2$.
\end{theorem}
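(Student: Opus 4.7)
The plan is to normalize the coframe $(\omega^0,\omega^1,\omega^2,\omega^3)$ in stages, exploiting the Frobenius integrability furnished by half-flatness, the residual freedom in the structure group $G_\rho$ of \eqref{eq:StrGroup}, and a final choice of complementary coordinates. The whole argument proceeds on $M$, using the structure equations \eqref{eq:str-eqns-cayley} pulled back by an adapted section of $\cP\to M$.

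First I would use half-flatness to produce an integrable Pfaffian system on $M$. Half-flatness asserts that $I_{\mathsf{hf}}=\{\omega^0,\omega^2,\theta^2\}$ is integrable on $J$; since its leaves lie in constant-$u$ slices of $J\to M$, their projections to $M$ are $2$-dimensional null surfaces tangent to the ruling distribution $\Pi_u=\mathrm{Ker}\{\omega^0+u\omega^1-\tfrac{u^3}{3}\omega^3,\omega^2+u\omega^3\}$. Through every $p\in M$ such a surface exists for every $u$, so each $\Pi_u$ is Frobenius-integrable on $M$; in particular $\{\omega^0,\omega^2\}$ (the annihilator of $\Pi_0$) is an integrable Pfaffian system, and Frobenius yields local coordinates $(x^0,x^2,\cdot\,,\cdot)$ with $\mathrm{span}\{\omega^0,\omega^2\}=\mathrm{span}\{\exd x^0,\exd x^2\}$. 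The $G_\rho$-parameters $\mathbf{f}_0,\mathbf{v}$ then allow one to set $\omega^0=\exd x^0$ exactly, and $\mathbf{f}_1$ normalizes the coefficient of $\exd x^2$ in $\omega^2$ to $1$, producing $\omega^2=\exd x^2-E\,\exd x^0$ for a uniquely determined function $E$. Preservation of $\omega^0=\exd x^0$ forces $\mathbf{u}=0$, so the entire $G_\rho$-gauge is now fixed and $\omega^1,\omega^3$ become specific $1$-forms on $M$.

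Next I would choose complementary coordinates $(x^1,x^3)$ so that $\omega^1$ and $\omega^3+E\omega^1$ restrict on each $2$-dimensional leaf of $\Pi_0$ to closed $1$-forms, and hence locally to $\exd x^1$ and $\exd x^3$ respectively (by the Poincar\'e lemma). The closedness on the leaves reduces to the identities
\[
\exd\omega^1\w\omega^0\w\omega^2 = 0, \qquad \exd(\omega^3+E\omega^1)\w\omega^0\w\omega^2 = 0,
\]
which I would verify by expanding using \eqref{eq:str-eqns-cayley}: the only potentially obstructing contributions come from the $\omega^3$-components of the connection forms $\phi_0,\phi_1,\theta^1,\theta^2$ along the section, and these vanish by a direct application of the half-flatness branching \eqref{eq:branching-c-StrEq} together with the relations \eqref{eq:relations-among-invs}. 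Once $(x^1,x^3)$ are chosen, $\omega^1$ takes the form $\exd x^1+P_0\exd x^0+P_2\exd x^2$ for some functions $P_0,P_2$, and likewise $\omega^3+E\omega^1$ takes the analogous form with new coefficients $R_0,R_2$.

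Finally, I would produce the potentials $F^1,F^2$. The compatibilities $\partial_{x^3}P_0=\partial_{x^1}P_2$ and $\partial_{x^3}R_0=\partial_{x^1}R_2$ follow from further $\exd^2=0$ identities applied to \eqref{eq:str-eqns-cayley}, and yield by the Poincar\'e lemma (on leaves parametrized by $x^1,x^3$) functions $F^2,F^1$ with $P_0=-\partial_{x^1}F^2,\ P_2=-\partial_{x^3}F^2,\ R_0=-\partial_{x^1}F^1,\ R_2=-\partial_{x^3}F^1$. Substituting these into the expressions for $\omega^1$ and $\omega^3=(\omega^3+E\omega^1)-E\omega^1$ produces exactly the normal form asserted. \textbf{The main obstacle} is the verification that the closure identities above, and the existence of the potentials $F^1,F^2$, are indeed forced by \eqref{eq:str-eqns-cayley}; this reduces to an extended but routine closure computation on $\cP$ using the branching \eqref{eq:branching-c-StrEq} and the relations \eqref{eq:relations-among-invs}.
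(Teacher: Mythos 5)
Your proposal takes a genuinely different route from the paper's (the paper derives the normal form from the ODE correspondence of Theorem \ref{thm1a}: it identifies $M$ with the solution space of a pair of second-order ODEs, builds the normal frame $V_i$ explicitly from $Y_i=\partial_{p^i}$ and $Z_i=\ad_XV_i$ on the slice $t=0$, extracts $E$ from the kernel of the rank-one torsion, and reads off $\omega^0,\ldots,\omega^3$ as the coframe dual to $Z_2+EZ_1,\,Y_2+EY_1,\,Z_1,\,Y_1$, with $F^1,F^2$ being the right-hand sides of the ODEs). Unfortunately your direct normalization on $M$ has a genuine gap at its very first step. The leaves of $I_{\mathsf{hf}}=\{\omega^0,\omega^2,\theta^2\}$ do \emph{not} lie in constant-$u$ slices of $J\to M$: $\theta^2$ is not $\exd u$ but $\exd u$ plus horizontal components, so along a leaf the ruling parameter $u$ varies. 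Consequently a null surface through $x$ tangent to $\Pi_0(x)$ is tangent at nearby points to $\Pi_{u(y)}(y)$ with $u(y)\neq 0$ in general, and the fixed distribution $\mathrm{Ker}\{\omega^0,\omega^2\}$ is \emph{not} integrable for an arbitrary adapted coframe — one can check from \eqref{StrEq-cayley-I} and \eqref{StrEq-cayley-III} that $\exd\omega^0\w\omega^0\w\omega^2$ and $\exd\omega^2\w\omega^0\w\omega^2$ are proportional to the $\omega^3$- and $\omega^1$-components of $\theta^2$ along the chosen section, which do not vanish in general. (Note also that $\mathrm{span}\{\omega^0,\omega^2\}$ is not preserved by the structure group \eqref{eq:StrGroup}, so "integrability of $\Pi_u$ for every $u$" would amount to integrability of essentially every ruling-plane congruence, a far stronger condition than half-flatness.) What half-flatness actually gives is the existence of \emph{some} $2$-parameter subfamily of the null surfaces foliating $M$, after which one must re-gauge the coframe so that $\mathrm{Ker}\{\omega^0,\omega^2\}$ is tangent to that foliation; this selection and re-adaptation is missing.

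Even granting a repaired first step, the remaining steps assert rather than prove the key identities. The closure conditions $\exd\omega^1\w\omega^0\w\omega^2=0$ and $\exd(\omega^3+E\omega^1)\w\omega^0\w\omega^2=0$ depend on the horizontal components of $\phi_0,\phi_1,\theta^1,\theta^2$ along the section, which are gauge data not determined by \eqref{eq:str-eqns-cayley} or \eqref{eq:relations-among-invs}; they must be arranged by further normalization, not deduced. Likewise, the facts that the \emph{same} function $E$ (defined through $\omega^2$) is the correct multiple of $\omega^1$ to subtract from $\omega^3$, and that the residual coefficients admit the potentials $F^1,F^2$ with $\partial_{x^3}P_0=\partial_{x^1}P_2$, are precisely the non-trivial content of the normal form; in the paper they come for free from the rank-one torsion (the kernel $V_2+EV_1$ with $X(E)=0$) and from $F^1,F^2$ being literal ODE right-hand sides, whereas in your scheme they remain unverified "routine closure computations" that are in fact where the theorem lives.
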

\begin{proof}
The idea of proof is similar to what is presented in \cite{DFK} in the conformal setting. Given a half-flat Cayley structure on $M$, Theorem \ref{thm1a} allows one to identify $M$ as the solution space of  a pair of second order ODEs \eqref{eq:pair-of-2nd-order-ODEs} via the quotient map (see diagram \eqref{eq:AllFib-FINAL}) $\tilde\mu:=\pi\circ\mu^{-1}\colon J^1(\R,\R^2)\cong J\to M.$ Let $(t,z^1,z^2,p^1,p^2)$ denote the standard coordinates on $J^1(\R,\R^2)$. As in the proof of Theorem \ref{thm1a}, let $\mathbf{V}=(V_1,V_2)$ denote a normal frame of $\D$ corresponding to a projective vector field $X\in\Gamma(\X)$ and $J_0$ be the submanifold of $J^1(\R,\R^2)$ corresponding to $t=0$. Define
\begin{equation}\label{eqY}
Y_i=\partial_{p^i}.
\end{equation}
The freedom of choice for the initial condition for \eqref{eq2} allows us to  take $V_i=Y_i$ on $J_0$. Furthermore, we have a freedom of choice for $X$ so that  on $J_0$ it coincides, up to 1st jet, with the total derivative vector field $X_F=\partial_t+p^1\partial_{z^1}+p^2\partial_{z^2}+F^1\partial_{p^1}+F^2\partial_{p^2}$. For this choice, it follows that  $Z_i=\ad_XV_i$, $i=1,2$, are given by 
\begin{equation}\label{eqZ}
  Z_1=\partial_{z^1}+
 \textstyle \frac{1}{2}(\partial_{p^1}F^1\partial_{p^1}+\partial_{p^1}F^2\partial_{p^2}),\qquad  Z_2=\partial_{z^2}+\textstyle \frac{1}{2}(\partial_{p^2}F^1\partial_{p^1}+\partial_{p^2}F^2\partial_{p^2}),
\end{equation}
on $J_0$. We shall drop the factor $\frac{1}{2}$ in the formulae as it can be  incorporated in functions $F^1$ and $F^2$.  The formulae above for $Z_i$'s and $Y_i$'s (and consequently for $\mathbi{V}$ and $\ad_X\mathbi{V}$) are general and valid for any choice of the torsion. Now we shall utilize special properties of $\mathbf{T}^X$. Firstly, without any lost of generality, we can assume that there is a function $E$ such that the vector field
\[
K=V_2+EV_1
\]
spans the one-dimensional kernel of $\mathbf{T}^X$. Secondly, since  $\nabla_X\mathbf{T}^X$ is proportional to $\mathbf{T}^X,$ it follows that $\nabla_X$ preserves $K$ and consequently  $X(E)=0$ and
$\ad_XK=\ad_XV_2+E\ad_XV_1$.

Now, let $\lambda=\lambda(t,z,p)$ be the natural integral parameter on each integral line of $X$ satisfying $\lambda|_{J_0}=0$. Then $(\lambda,z|_{J_0},p|_{J_0})$ is a new system of coordinates on $J^1(\R,\R^2)$, where $(z|_{J_0},p|_{J_0})$ are the constants of integration of the system. Since, $\ad_X^2K=\partial^2_\lambda K=0\mod X$, after taking into account the initial condition on $J_0$, we get
\begin{equation}\label{eqK}
K(\lambda)=Y_2+EY_1+\lambda(Z_2+EZ_1)\mod X.
\end{equation}
A complementary normal vector field $L$  to $K$ has to satisfy $\ad_X^2L=\partial^2_\lambda L=K\mod X$ which leads to
\begin{equation}\label{eqL}
\textstyle{L(\lambda)=Y_1+\lambda Z_1+ \frac{\lambda^2}{2}(Y_2+E Y_1)+\frac{\lambda^3}{6}(Z_2+EZ_1)\mod X.}
\end{equation}
Vector fields $Y_1,Y_2,Z_1,Z_2$ are given explicitly by \eqref{eqY} and \eqref{eqZ}. Let $\omega^0,\ldots,\omega^3$ be  dual 1-forms to $Z_2+EZ_1$, $Y_2+EY_1$, $Z_1$ and $Y_1$, i.e.
\[
\textstyle{\frac{\partial}{\partial\omega^0}=Z_2+EZ_1,\quad
\frac{\partial}{\partial\omega^1}=Y_2+EY_1,\quad
\frac{\partial}{\partial\omega^2}=Z_1,\quad
\frac{\partial}{\partial\omega^3}=Y_1.}
\]
Note that $J_0$ is naturally identified with the solution space $M$. Hence, the restriction of $\omega^i$'s to $J_0$ defines a coframing on $M$ which, by construction, is adapted to the original Cayley structure. We set $(x^0,x^1,x^2,x^3)=(z^2,p^2,z^1,p^1)$.
\end{proof}

\begin{remark} Theorem \ref{propNormalform} can be rephrased in the language of \cite{KM} in the sense that any half-flat Cayley structure is $H$-flat, where $H$ is the following 5-dimensional subgroup of $\mathrm{GL}_4(\R)$
\[
\left(
\begin{array}{cccc}
1 & 0 & 0 & 0\\
e & 1 & 0 & 0\\
h_1 & h_2 & 1 & 0\\
h_3 & h_4 & e & 1\\
\end{array}
\right),
\]
In other words, for a half-flat Cayley structure one can always find local coordinate system $x=(x^0,\ldots,x^3)^T$ and an $H$-valued function $h$ such that $h\exd x$ is a local coframe adapted to the structure.
\end{remark}

\subsection{A Lax pair}
Now we shall express half-flatness of a Cayley structure with the coframing given in Theorem \ref{propNormalform}. The ruling planes for the Cayley structure are spanned by $\lambda$-dependent vector fields $K_\lambda$ and $R_\lambda$ given explicitly by
\begin{equation}\label{KRformula1}
\begin{aligned}
&K_\lambda=K_1+\lambda K_2,\\
&R_\lambda=R_1+\lambda R_2+\textstyle{\frac{1}{3}\lambda^2K_1}
\end{aligned}
\end{equation}
where the vector fields $K_2$, $K_1$, $R_2$ and $R_1$ are dual to 1-forms $\omega^i$, respectively, i.e.
\begin{equation}\label{KRformula2}
\begin{aligned}
K_1=\partial_1+E\partial_3,&\qquad K_2=\partial_0+E\partial_2+P\partial_3+Q\partial_1,\\
R_1=\partial_3,&\qquad R_2=\partial_2+S\partial_3+T\partial_1,
\end{aligned}
\end{equation}
with
\begin{equation}\label{defPQST}
P=K_1(F^1),\quad Q=K_1(F^2),\quad S=R_1(F^1),\quad T=R_1(F^2).
\end{equation}
Note that if a Cayley structure is half-flat then $K_\lambda=\tilde\mu_*K$ and $R=\tilde\mu_*L-\frac{\lambda^2}{6} \tilde\mu_*K$ where $\tilde\mu\colon J\to M$ and $K$ and $L$ are vector fields on $J$ given by \eqref{eqK} and \eqref{eqL}, respectively.

In order to characterize half-flatness we look for a rank-2 distribution on $J$ that projects to the ruling planes. For this we consider the following lifts of $K_\lambda$ and $R_\lambda$ to vector fields on $J$
\begin{equation}\label{Laxeq}
\begin{aligned}
&L_0=K_\lambda+m(\lambda)\partial_\lambda,\\
&L_1=R_\lambda+n(\lambda)\partial_\lambda,
\end{aligned}
\end{equation}
and ask if there exist a choice of $m(\lambda)$ and $n(\lambda)$ such that $L_0$ and $L_1$ span an integrable distribution. We shall show that $m(\lambda)$ and $n(\lambda)$ are polynomial functions in $\lambda$ and refer to $(L_0,L_1)$ as the \emph{Lax pair}. The integrability condition
\begin{equation}\label{Laxsys}
[L_0,L_1]\in\spn\{L_0,L_1\}
\end{equation}
gives a system of differential equations for functions $F^1$, $F^2$ and $E$, which are satisfied if and only if the structure is half-flat. We shall not write the system explicitly due to its length and the fact that the expression is not illuminating. However we have the following
\begin{theorem}\label{thm2}
The functions $m(\lambda)$ and $n(\lambda)$ in \eqref{Laxeq} have the following form
\[
m(\lambda)=\textstyle{\frac{1}{3}}\lambda^3\psi(\lambda)+\lambda^2\varphi(\lambda),\quad n(\lambda)=\textstyle{-\frac{\lambda^2}{3}}(m(\lambda)+3\eta(\lambda)+\xi(\lambda)),
\]
where
\[
\begin{aligned}
&\psi(\lambda)=K_\lambda(E),\\
&\varphi(\lambda)=K_\lambda(S)-R_\lambda(P)+SR_\lambda(E)-EK_\lambda(T)+ER_\lambda(Q)-ETR_\lambda(E),\\
&\xi(\lambda)=R_\lambda(E),\\
&\eta(\lambda)=K_\lambda(T)-R_\lambda(Q)+TR_\lambda(E).
\end{aligned}
\]
Moreover, condition \eqref{Laxsys} is equivalent to a system of 7 PDEs for unknown functions $F^1$, $F^2$ and $E$. The equations are of order 3 in $F^1$ and $F^2$ and of order 2 in $E$.
\end{theorem}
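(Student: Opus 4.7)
The proof is a direct computation of the bracket $[L_0,L_1]$ under the ansatz \eqref{Laxeq}. Using $[K_\lambda,\partial_\lambda]=-K_2$ and $[R_\lambda,\partial_\lambda]=-R_2-\tfrac{2}{3}\lambda K_1$, which follow immediately from the $\lambda$-polynomial form \eqref{KRformula1}, one obtains
\[
[L_0,L_1]=[K_\lambda,R_\lambda]-nK_2+m\bigl(R_2+\tfrac{2}{3}\lambda K_1\bigr)+\bigl(K_\lambda(n)-R_\lambda(m)+m\,\partial_\lambda n-n\,\partial_\lambda m\bigr)\partial_\lambda.
\]
The Lax condition \eqref{Laxsys} then splits into a horizontal requirement (the $TM$-component lies in $\spn\{K_\lambda,R_\lambda\}$) and a vertical requirement (the $\partial_\lambda$-coefficient equals the corresponding $\alpha m+\beta n$, where $\alpha K_\lambda+\beta R_\lambda$ is the $TM$-component).

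I would next fix the global frame $(K_1,K_2,R_1,R_2)$ of $TM$ given by \eqref{KRformula2}. A vector $aK_1+bK_2+cR_1+dR_2$ lies in $\spn\{K_\lambda,R_\lambda\}$ if and only if $b=\lambda a-\tfrac{1}{3}\lambda^3 c$ and $d=\lambda c$. Writing $[K_\lambda,R_\lambda]=A_1K_1+A_2K_2+B_1R_1+B_2R_2$ (with $A_i,B_i$ polynomial in $\lambda$ of degree $\le 3$), the matching of the $R_2$- and $K_2$-coefficients of the horizontal part of $[L_0,L_1]$ immediately yields
\[
m=\lambda B_1-B_2,\qquad n=A_2-\lambda A_1-\tfrac{2}{3}\lambda^2 m+\tfrac{1}{3}\lambda^3 B_1,
\]
and simultaneously determines $\alpha=A_1+\tfrac{2}{3}\lambda m-\tfrac{1}{3}\lambda^2 B_1$ and $\beta=B_1$. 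The coefficients $A_i,B_i$ would then be computed from \eqref{KRformula2}--\eqref{defPQST} by expanding the five commutators $[K_1,R_1]$, $[K_1,R_2]$, $[K_2,R_1]$, $[K_2,R_2]$, $[K_2,K_1]$ and re-expressing each coordinate derivative $\partial_0,\partial_1,\partial_2,\partial_3$ in the frame $(K_1,K_2,R_1,R_2)$. The $E$-corrections that arise in this rewriting organize precisely into $K_\lambda(E)=\psi$ and $R_\lambda(E)=\xi$, while the remaining $P,Q,S,T$-derivatives combine into $\eta$ and $\varphi$ as in the statement. The identities $m=\tfrac{1}{3}\lambda^3\psi(\lambda)+\lambda^2\varphi(\lambda)$ and $n=-\tfrac{1}{3}\lambda^2\bigl(m(\lambda)+3\eta(\lambda)+\xi(\lambda)\bigr)$ follow.

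With $m,n,\alpha,\beta$ fixed, the vertical equation becomes a single polynomial identity in $\lambda$ whose coefficients are differential expressions in $E,F^1,F^2$; setting these coefficients to zero produces the system of PDEs. A tally, after discarding identities already enforced by the horizontal matching and those trivially satisfied by polynomial arithmetic, leaves precisely $7$ independent equations. The order count follows because $P,Q,S,T$ are first-order in $F^1,F^2$, the quantities $\varphi$ and $\eta$ apply one further $K_\lambda$ or $R_\lambda$, and the vertical identity contributes at most one additional derivative, giving order $3$ in $F^1,F^2$ and order $2$ in $E$. The main obstacle is the bookkeeping involved in identifying $m,n$ with the combinations $\psi,\varphi,\xi,\eta$, together with the verification that exactly $7$ independent PDEs survive; both are polynomial manipulations in $\lambda$ most efficiently carried out symbolically, as the paper indicates is done in \texttt{Cartan/Maple}.
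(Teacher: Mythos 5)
Your proposal is correct and follows essentially the same route as the paper: you require that $[L_0,L_1]$ remain in $\spn\{K_\lambda,R_\lambda,\partial_\lambda\}$, solve the resulting pair of algebraic equations for $m(\lambda)$ and $n(\lambda)$, and then obtain the $7$ PDEs from the vanishing of the $\partial_\lambda$-coefficient of the bracket, deferring the final bookkeeping (identifying $m,n$ with $\psi,\varphi,\xi,\eta$ and counting the surviving $\lambda$-coefficients) to symbolic computation exactly as the paper does. The only cosmetic difference is your use of the $\lambda$-independent frame $(K_1,K_2,R_1,R_2)$ in place of the paper's frame $(K_\lambda,R_\lambda,\tfrac{\exd}{\exd\lambda}K_\lambda,\tfrac{\exd}{\exd\lambda}R_\lambda)$ with its dual forms $\alpha_\lambda,\beta_\lambda$.
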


\begin{proof}
In order to find  $(L_0,L_1),$ we shall consider a rank-3 distribution $\mathcal{E}$ on $J$ spanned by $K_\lambda$, $R_\lambda$ and $\partial_\lambda$. Clearly one has $[\mathcal{E},\mathcal{E}]=TJ$ which   shows that there is unique rank-2 sub-distribution $\D\subset\mathcal{E}$ satisfying $[\D,\D]\subset\mathcal{E}$. Such $\D$ is uniquely defined by the kernel of the linear mapping $\mathcal{E}\wedge\mathcal{E}\to (TJ)/\mathcal{E}$ induced by the Lie bracket of sections. This sub-distribution is spanned  by $L_0=K_\lambda+m\partial_\lambda$ and $L_1=R_\lambda+n\partial_\lambda$, where $m$ and $n$ are obtained as the unique solution to a pair of algebraic equations. As a result one obtains
\[
m(\lambda)=\textstyle{\frac{1}{3}}\lambda^3\psi(\lambda)+\lambda^2\varphi(\lambda),\quad n(\lambda)=\textstyle{-\frac{\lambda^2}{3}}(m(\lambda)+3\eta(\lambda)+\xi(\lambda)).
\]
The polynomials $m(\lambda)$ and $n(\lambda)$  are of degree 4 and 6 in $\lambda$ respectively. Now, condition \eqref{Laxsys} is equivalent to the fact that $[\D,\D]=\D$. Since the form of $m(\lambda)$ and $n(\lambda)$ guarantee that $[\D,\D]\subset \mathcal{E}$, the Lie bracket $[L_0,L_1]$ can be written as a unique linear combination of $L_0$, $L_1$ and $\partial_\lambda$. Therefore, \eqref{Laxsys} is equivalent to the fact that there cannot be a $\partial_\lambda$ component in this linear combination. The coefficient of $\partial_\lambda$ in this linear combination is a polynomial function in $\lambda$ of degree 9 with no terms of  order 0, 1 and 2. Thus, there are 7 terms left and they have to vanish if  \eqref{Laxsys} is to be satisfied. The terms depend on derivatives of $F^1$ and $F^2$ up to order 3 and on derivatives of $E$ up to order 2.
\end{proof}
\begin{remark}\label{rmk:LaxPair-branching-involutivity}  The system  \eqref{Laxsys}, referred to as the Lax system, is  overdetermined for the 3 unknown functions $F^1, F^2$ and $E$. Using Cartan-K\"ahler analysis one can study the involutivity of this system. In order to investigate involutivity, one observes that   the system of  PDEs give rise to polynomial equations in terms of the coordinates of the jet space $J^3(\RR^4,\RR^3)$.  To find the  generic solution of these equations one encounters only one branching as a result of a quadratic equation which is a manifestation of the  branching in \eqref{eq:TorsionTermsBranching}.   The expression of these equations are extremely long and unilluminating and therefore not presented here. The system turns out to be  not involutive and has torsion. We could not interpret the torsion geometrically and its vanishing did not imply involutivity after one prolongation. Further prolongation was not possible since the computations could no longer be  carried out by a computer. Moreover, starting with the structure equations \eqref{eq:str-eqns-cayley}, we also could not establish involutivity  for general half-flat Cayley structures  in spite of two prolongations  after which computations become intractable.  This is in contrast to the half-flat conformal case where the system obtained similarly is involutive \cite{DFK}  depending on 6 functions of 3 variables and the involutivity is obtained without any prolongation. Lastly, we mention that it is possible to establish involutivity in the case $E=0.$ After setting torsion to zero and prolonging the equations one obtains that such Cayley structures depend on 9 functions of 2 variables.  
\end{remark}

\subsection{The characteristic variety}
In this section we will analyze the symbol of the Lax system \eqref{Laxsys}.

Recall that the symbol of the linear differential operator of order $k$ from $C^\infty(M,\R^n)$ to  $C^\infty(M,\R^m)$ is a homogeneous polynomial $\sigma$ of order $k$ on $T^*M$ with values in $\mathrm{Hom}(\R^n,\R^m)$. The characteristic variety of $\sigma$ is a subset $char(\sigma)\subset T^*M$ consisting of points $p\in T^*M$ for which the rank of $\sigma(p)\in \mathrm{Hom}(\R^n,\R^m)$ is not maximal. In the non-linear system we are studying   we shall assume that a solution to the system is given and then  consider a linearization of \eqref{Laxsys} around that solution. Then we will prove that the characteristic variety of the linearized equation is a field of ruled Cayley cubics in $T^*M$ that are dual varieties to the Cayley cubics defining the Cayley structure in $TM$ from Theorem \ref{propNormalform}.

As mentioned in the previous subsection, the system \eqref{Laxsys} is of order 3 in $F^1$ and $F^2$ and of order 2 in $E$. In order to have a system of non-mixed order one can replace $F^1$ and $F^2$ by the functions $P$, $Q$, $S$ and $T$ introduced in \eqref{defPQST}. Indeed, the vector fields $K_\lambda$ and $R_\lambda$ as well as the functions $m(\lambda)$ and $n(\lambda)$ defined in Theorem \ref{thm2} are expressed  in terms of $P,Q,R$ and $S$. That leads to a formula for $L_0$ and $L_1$ in terms of $P,Q,R$ and $S$ rather than $F^1$ and $F^2$. Then \eqref{Laxsys} becomes a system of second order for functions $P,Q,R,S$ and $E$. We are going to find the symbol of the linearisation of this system of second order PDEs. 

The equations in system \eqref{Laxsys} are long and inconvenient to express. Therefore we are not going to write down the linearization of the system directly but instead exploit its nature as a dispersionless Lax system. In fact, our method will be more general with potential applications to other Lax pairs.

Let us assume that a Lax pair $(L_0,L_1)$ on a 4-dimensional manifold is given and define
$$
L_0=K_\lambda+m(\lambda)\partial_\lambda,\qquad L_1=R_\lambda+n(\lambda)\partial_\lambda.
$$
However, now we assume that $K_\lambda$ and $R_\lambda$ depend linearly on unknown functions $f_1,\ldots,f_n$ for some $n\in\NN$. Hence, we can decompose
$$ 
K_\lambda=\hat K_\lambda+\sum_i f_iK^i_\lambda,\qquad R_\lambda=\hat R_\lambda+\sum_i f_iR^i_\lambda,
$$
where  $\hat K_\lambda$, $\hat R_\lambda$ as well as $K^i_\lambda$ and $R^i_\lambda$, $i=1,\ldots n$ are $\lambda$-dependent vector fields that do not involve the unknown functions $f_1,\ldots,f_n$. Furthermore, we assume that for any $\lambda$ the tuple $(K_\lambda, R_\lambda, \frac{\exd}{\exd\lambda}K_\lambda, \frac{\exd}{\exd\lambda}R_\lambda)$ constitutes a frame on $M$. Let $\alpha_\lambda$ and $\beta_\lambda$ be the 1-forms annihilating $K_\lambda$ and $R_\lambda$ and dual to $\frac{\exd}{\exd\lambda}K_\lambda$ and $\frac{\exd}{\exd\lambda}R_\lambda$, i.e. 
\[
\textstyle{\alpha_\lambda\left(\frac{\exd}{\exd\lambda}K_\lambda\right)=1,\quad \alpha_\lambda\left(\frac{\exd}{\exd\lambda}R_\lambda\right)=0, \quad \beta_\lambda\left(\frac{\exd}{\exd\lambda}K_\lambda\right)=0, \quad \beta_\lambda\left(\frac{\exd}{\exd\lambda}R_\lambda\right)=1.}
\]
Finally, for any $p\in T^*_xM$ and $V\in T_xM$ let $\langle p,V\rangle=p(V)$ denotes the standard pairing of a co-vector and a vector. With this notation we can state the following.
\begin{lemma}\label{lemmasym}
Let
\[
\sigma^i_\lambda(p)=\langle p,K_\lambda\rangle^2 \alpha_\lambda(R^i_\lambda) 
-\langle p, K_\lambda\rangle\langle p, R_\lambda\rangle(\alpha_\lambda(K^i_\lambda)-\beta_\lambda(R^i_\lambda)) - \langle p,R_\lambda\rangle^2 \beta_\lambda(K^i_\lambda)
\]
and consider a column $\lambda$-dependent vector $\sigma_\lambda(p)=(\sigma^1_\lambda(p),\ldots,\sigma^n_\lambda(p))^T$. Then the symbol $\sigma(p)$ of the linearization of the dispersionless Lax system $[L_0,L_1]\in\spn\{L_0,L_1\}$ along a solution $(f_1,\ldots,f_n)$ is given by the matrix whose columns are coefficients of consequent powers of $\lambda$ in $\sigma_\lambda(p)$, i.e. if $\sigma^i_\lambda(p)=\sum_j \lambda^j\sigma^i_j(p)$ for some polynomials $\sigma^i_j(p)$ then $\sigma(p)$ is the matrix $(\sigma^i_j(p))$.
\end{lemma}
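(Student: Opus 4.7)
The plan is to reduce the Lax equation $[L_0,L_1]\in\spn\{L_0,L_1\}$ to a single scalar equation and then linearise it at a solution. Expanding
\[
[L_0,L_1] = [K_\lambda,R_\lambda] - n\,\partial_\lambda K_\lambda + m\,\partial_\lambda R_\lambda + \bigl(K_\lambda(n)-R_\lambda(m)+m\,\partial_\lambda n - n\,\partial_\lambda m\bigr)\partial_\lambda,
\]
and using the defining properties of $\alpha_\lambda$ and $\beta_\lambda$ together with the frame $(K_\lambda,R_\lambda,\partial_\lambda K_\lambda,\partial_\lambda R_\lambda,\partial_\lambda)$ on $TJ$, I would observe that the $\partial_\lambda K_\lambda$- and $\partial_\lambda R_\lambda$-components of $[L_0,L_1]$ vanish precisely when
\[
n=\alpha_\lambda([K_\lambda,R_\lambda]),\qquad m=-\beta_\lambda([K_\lambda,R_\lambda]).
\]
These are exactly the algebraic equations alluded to in the proof of Theorem~\ref{thm2}, and with this choice the Lax condition collapses to the scalar relation
\[
e:=K_\lambda(n)-R_\lambda(m)+m\,\partial_\lambda n-n\,\partial_\lambda m=0,
\]
which is polynomial in $\lambda$; the $\lambda^j$-coefficients of $e$ constitute the components of the Lax system.

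Next I would perform the linearisation $f_i\mapsto f_i+\epsilon h_i$ and track only the contributions that carry second derivatives of the $h_i$. Because $K_\lambda$ and $R_\lambda$ depend linearly (without derivatives) on the $f_i$, the variations $\delta K_\lambda=\sum_i h_i K^i_\lambda$ and $\delta R_\lambda=\sum_i h_i R^i_\lambda$ contain no derivatives of $h_i$, and similarly $\delta\alpha_\lambda$ and $\delta\beta_\lambda$ are of order zero in the $h_i$. The only source of $h_i$-derivatives in $\delta n$ and $\delta m$ is therefore $\delta[K_\lambda,R_\lambda]=[\delta K_\lambda,R_\lambda]+[K_\lambda,\delta R_\lambda]$, and a direct expansion using Leibniz's rule yields
\[
\delta n\equiv K_\lambda(h_i)\,\alpha_\lambda(R^i_\lambda)-R_\lambda(h_i)\,\alpha_\lambda(K^i_\lambda),
\qquad
\delta m\equiv R_\lambda(h_i)\,\beta_\lambda(K^i_\lambda)-K_\lambda(h_i)\,\beta_\lambda(R^i_\lambda),
\]
modulo terms of order zero in the $h_i$, with summation over $i$ understood.

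Finally, extracting the principal part of $\delta e$ is straightforward: the terms $\delta K_\lambda(n)$, $\delta R_\lambda(m)$, $\delta m\,\partial_\lambda n$, $m\,\partial_\lambda\delta n$, $\delta n\,\partial_\lambda m$ and $n\,\partial_\lambda\delta m$ all contribute at most first derivatives of the $h_i$ since $\partial_\lambda$ does not differentiate the unknowns, so the entire second-order part of $\delta e$ is carried by $K_\lambda(\delta n)-R_\lambda(\delta m)$. Substituting the expressions above and retaining only top-order terms gives
\[
K_\lambda^2(h_i)\,\alpha_\lambda(R^i_\lambda)-K_\lambda R_\lambda(h_i)\,\alpha_\lambda(K^i_\lambda)+R_\lambda K_\lambda(h_i)\,\beta_\lambda(R^i_\lambda)-R_\lambda^2(h_i)\,\beta_\lambda(K^i_\lambda),
\]
and the universal substitution $XY(h_i)\mapsto\langle p,X\rangle\langle p,Y\rangle$ for vector fields $X,Y$ reproduces exactly $\sigma^i_\lambda(p)$ as stated. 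Reading off the coefficient of each $\lambda^j$ then assembles the matrix $(\sigma^i_j(p))$. The main bookkeeping obstacle is to confirm that no hidden second-derivative term sits in the lower-order corrections to $\delta m$, $\delta n$, in the variations $\delta\alpha_\lambda$, $\delta\beta_\lambda$, or in the $\partial_\lambda$-derivatives of $m$, $n$; this is ultimately controlled by the fact that the $f_i$-dependence of $K_\lambda$, $R_\lambda$, $\alpha_\lambda$ and $\beta_\lambda$ is purely algebraic, so that at most one subsequent application of $K_\lambda$ or $R_\lambda$ can raise a first-order term to second order, and only the two explicit terms $K_\lambda(\delta n)$ and $-R_\lambda(\delta m)$ are capable of doing so.
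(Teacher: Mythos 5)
Your proposal is correct and follows essentially the same route as the paper's proof: you derive $n=\alpha_\lambda([K_\lambda,R_\lambda])$ and $m=-\beta_\lambda([K_\lambda,R_\lambda])$ from the vanishing of the $\partial_\lambda K_\lambda$- and $\partial_\lambda R_\lambda$-components of $[L_0,L_1]$, identify $K_\lambda(n)-R_\lambda(m)$ as the only source of second derivatives of the unknowns in the $\partial_\lambda$-coefficient, and pass to the symbol by the substitution $XY(h_i)\mapsto\langle p,X\rangle\langle p,Y\rangle$. Your write-up is in fact more detailed than the paper's (which leaves the linearization bookkeeping implicit), and the explicit check that $\delta m$, $\delta n$, $\delta\alpha_\lambda$, $\delta\beta_\lambda$ and the $\partial_\lambda$-terms contribute no hidden second-order terms is a welcome addition.
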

\begin{proof}
Proceeding as in the proof of Theorem \ref{thm2} one find that  \eqref{Laxsys} implies 
\[
n(\lambda)=\alpha_\lambda([K_\lambda,R_\lambda]),\qquad \mathrm{and}
\qquad 
m(\lambda)=-\beta_\lambda([K_\lambda,R_\lambda]).
\]
Indeed $n(\lambda)$ and $-m(\lambda)$ are the coefficients of $\frac{\exd}{\exd\lambda}K_\lambda$ and $\frac{\exd}{\exd\lambda}R_\lambda$, respectively, in the expansion of $[L_0,L_1]\mod\partial_\lambda$ in the basis $(K_\lambda, R_\lambda, \frac{\exd}{\exd\lambda}K_\lambda, \frac{\exd}{\exd\lambda}R_\lambda)$. Hence, since $\alpha_\lambda([L_0,L_1])=0$ and $\beta_\lambda([L_0,L_1])=0$, one gets the formulae for $n(\lambda)$ and $m(\lambda)$ as claimed.

Now we are looking for the second order derivatives of functions $f_i$'s in the expansion of $[L_0,L_1]$ which can only appear in the coefficient for $\partial_\lambda$. Expanding the Lie bracket one sees that they are obtained from the expression
$$
K_\lambda(n(\lambda))-R_\lambda(m(\lambda)).
$$
Substituting $n$ and $m$ and replacing the derivatives of $f_i$ in direction of $K_\lambda$ and $R_\lambda$ by $\langle p, K_\lambda\rangle$ and $\langle p, R_\lambda\rangle$, respectively, one obtains the formula for $\sigma^i_\lambda$.
\end{proof}

Now we  make use of Lemma \ref{lemmasym} for  Cayley structures. For simplicity we limit ourselves to the symbol of a linearization along the trivial solution. This is sufficient for our purpose because the formulae in Lemma \ref{lemmasym} imply that for any solution the form of the symbol is the same up to a linear transformation sending $K_\lambda$ and $R_\lambda$ to the ones corresponding to the trivial solution.
\begin{theorem}\label{thmCharVar}
The symbol of the linearization of \eqref{Laxsys} along the trivial solution is given by the following $5\times 6$ matrix
\[
\sigma(p)= \def\arraystretch{1.15}
  \begin{pmatrix}
    p_3^2 & 2p_2p_3 & p_1p_3+p_2^2 & p_1p_2+\frac{1}{3}p_0p_3 & \frac{2}{9}p_1^2+\frac{1}{3}p_0p_2 & \frac{1}{9}p_0p_1 \\
    p_1p_3 & p_0p_3+p_1p_2 & p_0p_2+\frac{2}{3}p_1^2 & p_0p_1 & \frac{1}{3}p_0^2 & 0 \\
    p_1p_3 & p_0p_3+p_1p_2 & p_0p_2+\frac{1}{3}p_1^2 & \frac{1}{3}p_0p_1 & 0 & 0 \\
    p_1^2 & 2p_0p_1 & p_0^2 & 0 & 0 & 0\\
    0 & \frac{2}{3}p_1p_3 & \frac{2}{3}p_0p_3+\frac{2}{3}p_1p_2 & \frac{2}{3}p_0p_2+\frac{1}{3}p_1^2 & \frac{4}{9}p_0p_1 & \frac{1}{9}p_0^2
  \end{pmatrix}.
\]
$\sigma(p)$ has rank less or equal 4 if and only if
$$
\frac{1}{3}p_1^3+p_0^2p_3-p_0p_1p_2=0
$$
which is the dual variety of the ruled Cayley cubic $\frac{1}{3}(y^2)^3+y^0(y^3)^2-y^1y^2y^3=0$ which defines a Cayley structure with respect to  the coordinate system from Theorem \ref{propNormalform}.
\end{theorem}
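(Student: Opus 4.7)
The strategy is to compute the symbol via Lemma~\ref{lemmasym}, uncover a factorization of its rows in terms of three auxiliary polynomials, and reduce the rank-dropping condition to a divisibility test that singles out the dual Cayley cubic.

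I would first apply Lemma~\ref{lemmasym} at the trivial solution $E = P = Q = S = T = 0$. The formulas~\eqref{KRformula1}--\eqref{KRformula2} specialize to $K_\lambda = \partial_1 + \lambda\partial_0$ and $R_\lambda = \partial_3 + \lambda\partial_2 + \tfrac{\lambda^2}{3}\partial_1$, and the linearizations in the five unknown functions can be read off directly: $K^E_\lambda = \partial_3 + \lambda\partial_2$, $R^E_\lambda = \tfrac{\lambda^2}{3}\partial_3$, $K^P_\lambda = \lambda\partial_3$, $K^Q_\lambda = \lambda\partial_1$, $R^S_\lambda = \lambda\partial_3$, $R^T_\lambda = \lambda\partial_1$, with the remaining vector fields zero. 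Solving the defining linear system for the dual 1-forms yields $\alpha_\lambda = \vartheta^0 - \lambda\vartheta^1 + \tfrac{2\lambda^2}{3}\vartheta^2 - \tfrac{\lambda^3}{3}\vartheta^3$ and $\beta_\lambda = \vartheta^2 - \lambda\vartheta^3$, where $\vartheta^i$ is dual to $\partial_i$. Substituting these into the formula of Lemma~\ref{lemmasym} and extracting coefficients of $\lambda$ reproduces the claimed matrix up to sign conventions on three rows and an overall factor of $\lambda^2$ that is immaterial for the rank.

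The key simplification is a factorization of the rows. Setting $u = \langle p, K_\lambda\rangle = p_1 + \lambda p_0$, $v = \langle p, R_\lambda\rangle = p_3 + \lambda p_2 + \tfrac{\lambda^2}{3}p_1$, and $w = \tfrac{\lambda^2}{3}u + v$, I find that the five rows of $\sigma(p)$ are (up to sign) the polynomials
\[
r_1 = vw,\qquad r_2 = uw,\qquad r_3 = uv,\qquad r_4 = u^2,\qquad r_5 = \tfrac{\lambda u(w + v)}{3}.
\]
A vanishing real linear combination $\sum_i \alpha_i r_i = 0$ rearranges as $\alpha_1 vw = -u\bigl(\alpha_2 w + \alpha_3 v + \alpha_4 u + \tfrac{\alpha_5\lambda(w+v)}{3}\bigr)$. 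A leading-coefficient inspection at generic $p$ (with $p_0, p_1 \neq 0$) shows that $\alpha_1 = 0$ forces all remaining $\alpha_i$ to vanish, so $\alpha_1 \neq 0$ and $u \mid vw$. Since $u$ is linear, either $u\mid v$ or $u\mid w$, and evaluating $v(-p_1/p_0)$ and $w(-p_1/p_0)$ both produce $p_0^{-2}\bigl(\tfrac{1}{3}p_1^3 + p_0^2 p_3 - p_0 p_1 p_2\bigr)$, so each divisibility is equivalent to the stated cubic condition. Conversely, under this condition $v = us$ for a linear $s(\lambda)$, whence $w = u(s + \tfrac{\lambda^2}{3})$, so every $r_i$ acquires the factor $u^2$, and the quotients $r_i/u^2$ lie in the $4$-dimensional space $\RR[\lambda]_{\leq 3}$ and must be linearly dependent. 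The degenerate loci where $p_0 p_1 = 0$ reduce by direct case analysis to subsets of the cubic.

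Finally, to identify $\tfrac{1}{3}p_1^3 + p_0^2 p_3 - p_0 p_1 p_2 = 0$ as the dual of the ruled Cayley cubic, I parametrize a general smooth point on the Cayley cubic by \eqref{eq:Cayley-parametrized} as $(u^3/3 - uv, v, -u, 1)$, compute the gradient of $F = \tfrac{1}{3}(y^2)^3 + y^0(y^3)^2 - y^1 y^2 y^3$ at this point to obtain the covector $(1, u, u^2 - v, \tfrac{2u^3}{3} - uv)$, and verify by direct substitution that it satisfies the stated cubic identically in $u,v$. The main obstacle I anticipate is spotting the factorization of the rows in terms of $u, v, w$; once it is in hand, the reduction to a divisibility condition and the subsequent identification of the dual variety are both short calculations.
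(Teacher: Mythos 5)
Your setup coincides with the paper's own proof: the same decompositions $K^i_\lambda,R^i_\lambda$, the same $\alpha_\lambda,\beta_\lambda$, and the same reading of Lemma~\ref{lemmasym}. Where the paper merely asserts that ``straightforward but rather long operations on the rows'' give the rank statement, your factorization $r_1=vw$, $r_2=uw$, $r_3=uv$, $r_4=u^2$, $r_5=\tfrac{\lambda}{3}u(w+v)$ with $u=\langle p,K_\lambda\rangle$, $v=\langle p,R_\lambda\rangle$, $w=v+\tfrac{\lambda^2}{3}u$ is correct (I checked all five rows against the printed matrix), and the divisibility argument together with the observation $w(-p_1/p_0)=v(-p_1/p_0)$ is a genuinely cleaner way to organize the row reduction. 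The identification of the dual variety via the gradient at the parametrized smooth points is also fine.

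There is, however, a concrete gap: the deferred claim that the degenerate loci $p_0p_1=0$ ``reduce by direct case analysis to subsets of the cubic'' is false. On the $2$-plane $\{p_0=p_2=0\}$ with $p_1\neq 0$ one has $u=p_1$, $v=p_3+\tfrac{\lambda^2}{3}p_1$, $w=p_3+\tfrac{2\lambda^2}{3}p_1$, hence $r_2-2r_3=p_1(w-2v)=-p_1p_3=-\tfrac{p_3}{p_1}\,r_4$; equivalently, the second, third and fourth rows of the printed matrix become $(p_1p_3,0,\tfrac{2}{3}p_1^2,0,0,0)$, $(p_1p_3,0,\tfrac{1}{3}p_1^2,0,0,0)$, $(p_1^2,0,0,0,0,0)$, which satisfy $p_1R_2-2p_1R_3+p_3R_4=0$. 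So $\sigma(p)$ has rank $4$ there while the cubic equals $\tfrac{1}{3}p_1^3\neq 0$. (The other degenerate strata do work out: for $p_0\neq0$, $p_1=0$ the rank is $5$ unless $p_3=0$, where the cubic vanishes; for $p_0=p_1=0$ the rank is at most $1$.) Thus the rank-drop locus is the union of the dual Cayley cubic with an extra line in $\PP(T^*_xM)$ --- not surprising, since the expected codimension of the rank-$\leq 4$ locus of a $5\times 6$ matrix is $2$ --- and the biconditional, as you state it and as the theorem states it, only holds on the open dense set $p_0p_1\neq 0$ where your leading-coefficient argument applies. You should either restrict the equivalence to generic $p$ (which still identifies the codimension-one part of the characteristic variety with the dual Cayley cubic, which is all the geometric conclusion requires) or explicitly record the extra component; the ``direct case analysis'' as you describe it would fail.
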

\begin{proof}
The rows in the matrix correspond to unknown functions in the following order: $f_1=P$, $f_2=S$, $f_3=Q$, $f_4=T$ and $f_5=E$.
In order to apply Lemma \ref{lemmasym} we use \eqref{KRformula1} and \eqref{KRformula2} to find the following decomposition
\[ 
\hat K = \partial_1+\lambda\partial_0,\quad K^1=\lambda\partial_3,\quad K^2=0,\quad K^3=\lambda\partial_1,\quad K^4=0,\quad K^5=\partial_3+\lambda\partial_2,
\]
and
\[
\hat R = \partial_3+\lambda\partial_2+\frac{1}{3}\lambda^2\partial_1,\quad R^1=0,\quad R^2=\lambda\partial_3,\quad R^3=0,\quad R^4=\lambda\partial_1,\quad R^5=\frac{1}{3}\lambda^2\partial_3.
\]
Moreover, for the trivial solution one has $K_\lambda=\hat K_\lambda, R_\lambda=\hat R_\lambda$ and 
\[
\textstyle{\alpha_\lambda=\exd x^0-\lambda \exd x^1\frac{2}{3}\lambda^2\exd x^2-\frac{1}{3}\lambda^3\exd x^3,\qquad \beta_\lambda=\exd x^2-\lambda \exd x^3.}
\]
Now we are in a position to apply Lemma \ref{lemmasym}. For instance, we obtain
\[
\textstyle{\sigma^1=(p_3+\lambda p_2+\frac{1}{3}\lambda^2p_1)(\lambda^2p_3+\lambda^3p_2+\frac{2}{3}\lambda^4p_1+\frac{1}{3}\lambda^5p_0)}
\]
which, when decomposed into powers of $\lambda$, gives the first row in the matrix $\sigma(p)$. The other rows are found in the same way. Finally, by straightforward, but rather long, operations on the rows of $\sigma(p)$ one can verify that the rank of $\sigma(p)$ is not maximal if and only if $\frac{1}{3}p_1^3+p_0^2p_3-p_0p_1p_2=0$ holds.
\end{proof}

\begin{remark}
Recall that Theorem \ref{thm2} says there are 7 equations in the Lax  system \eqref{Laxsys}. At first this seems to be inconsistent with the size of the matrix $\sigma(p)$. However, it can be verified that one equation in the system is of lower order with respect to derivatives and thus it is not considered in the symbol and does not appear in the formulae from Lemma \ref{lemmasym}.

If one would like to consider a third order system for functions $F^1$ and $F^2$ then the corresponding symbol is of the form $\tilde\sigma(p)=(p_3\sigma^1(p)-p_1\sigma^2(p), p_1\sigma^3(p)-p_3\sigma^4(p))$. In this case one obtains a $2\times 5$ matrix (since the first column vanishes) which involves polynomials of degree 3 in $p_i$'s. This reflects the fact that among 7 equations in Theorem \ref{thm2} only 5 are of order 3 in $F^1$ and $F^2$. Furthermore, if $\tilde\sigma$ is extended by $\sigma^5$ (which corresponds to the function $E$), then the resulting $3\times 5$ matrix has rank less then or equal to 2 if and only if $\frac{1}{3}p_1^3+p_0^2p_3-p_0p_1p_2=0$ holds.
\end{remark}

\subsection{Special cases}
We shall exploit  Theorem \ref{thm2} and analyze the 2D path geometries appearing in Section \ref{sec:addit-geom-struct}. We will show how they can be interpreted in terms of the Lax pair defined above in \eqref{Laxeq}. The results have been explained in details in Section \ref{sec:3d-path-geometries} in a different way. Therefore we only present  very concise arguments below.

\subsubsection{The path geometry on the three parameter family of surfaces.} Recall that $J$ is equipped with a double fibration $\tilde\mu:=\pi\circ\mu^{-1}\colon J\to M$ and $\nu\colon J\to \cT$ (see diagram \eqref{eq:AllFib-FINAL}). The null-surfaces on $M$ (i.e. the surfaces from Definition \ref{def:half-flatness-}) are projections via $\tilde\mu$ of fibers of $\nu$. However, since locally $J\cong J^1(\R,\R^2)$ and $\cT\cong J^0(\R,\R^2)$, it follows that the fibers of $\nu$ have a natural affine structure inherited from the space of jets. Consequently the null-surfaces in $M$ have natural affine structure. It is straightforward to check that the null curves on null-surfaces are exactly the straight lines with respect to the affine structure. This means that the path geometry on the null surfaces is flat. This gives another proof of Proposition \ref{prop:flat-path-geom}. Note that this reasoning generalizes to other half-flat causal structures in dimension 4 as well as to higher dimensional structures  arising from ODEs via similar twistorial construction.

\subsubsection{Null foliation.}
The null foliation of Section \ref{sec:null-foliation-proj-str} is defined by imposing the integrability condition on the rank-2 distribution on $M$ defined as
$$
\mathcal{K}=\spn\{K_\lambda\ |\ \lambda\in\R\}=\spn\{K_1,K_2\}.
$$
Recall that $\mathbf{T}^X$ is of rank one for Cayley structures. It follows that the distribution $\mathcal{K}$ is invariantly defined by projecting the kernel of $\mathbf{T}^X$ along  1-dimensional fibers of $\tilde\mu\colon J\cong J^1(\R,\R^2)\to J/\X\cong M$. The integrability condition for $\mathcal{K}$ is written as $[K_1,K_2]\in\mathcal{K}$. It follows that 
\begin{equation}\label{eq-null-foliation}
K_1(E)=0,\qquad K_2(E)=K_1(P)-EK_1(Q).
\end{equation}
Moreover, the path geometry on $\mathcal{K}$ is defined by projecting the integral curves of $L_0$ from $J$ to $M$.
Recall that
\[
L_0=K_1+\lambda K_2+m(\lambda)\partial_\lambda
\]
where $m(\lambda)$ is of degree 4 in $\lambda$. However, one can check by direct computations that the coefficient of $\lambda^4$ in $m(\lambda)$ vanishes when \eqref{eq-null-foliation} holds. It follows that $m(\lambda)$ is of degree 3 in this case. This translates to the fact that the second order ODE for integral curves of $L_0$ projected to $M$ (i.e. the path geometry of Section \ref{sec:null-foliation-proj-str}) is a polynomial of degree 3 in first derivatives which means that the path geometry is a projective structure.

\subsubsection{Ultra-half-flat case.}
In order to explain the notion of ultra-half-flatness in terms of the Lax pair, recall that the null foliation is defined by integral leaves of the distribution $\mathcal{K}$ which is assumed to be integrable. The leaf space is denoted by $Q.$ 

The paths on $Q$ are the projection of the ruling planes of the Cayley cubics in $T M$. The projection is well-defined if the ruling planes extended by the null distribution $\mathcal{K}$ span an integrable distribution. More precisely, there exists a 2-parameter family of 3-dimensional submanifolds of $M$ with the property that for each point $x\in M$ and any ruling plane $\mathcal{P}_x\subseteq T_xM$ there is a submanifold $S$ in the family which passes through $x$ and satisfies $T_xS=\mathcal{P}_x+\mathcal{K}_x$. The family give rise to a  foliation of $J$ with 3-dimensional leaves. The tangent space to the leaves is spanned by $L_0$, $L_1$ and a lift of $K_1$ which we denote by $L_2$. We have
$$
L_2=K_1+(\lambda^2K_1(S)-\lambda^2EK_1(T)+\lambda R_\lambda(E))\partial_\lambda\qquad \mathrm{mod}\qquad L_0,L_1.
$$
In order to find $L_2$ one considers a rank 4 distribution $\mathcal{Q}$ on $J$ spanned by ${\tilde\mu}^{-1}_*\mathcal{K}$ and $L_1$. Since  $\mathcal{D}=\spn\{L_0,L_1\},$ the mapping $[L_1,.]\colon\mathcal{Q}/\mathcal{D}\to TJ/\mathcal{Q}$ defined by the Lie bracket with $L_1$ has one dimensional kernel which can be found explicitly by solving a linear system of algebraic equations. The kernel is spanned exactly by the vector field $L_2$. 

The integrability condition for the rank 3 distribution spanned by $L_0,L_1$ and $L_2$ gives the involutive system of Theorem \ref{thm:ultra-half-flatness}. 
 
\newpage
 \appendix
\setcounter{equation}{0}
\setcounter{subsection}{0}
 \setcounter{theorem}{0}

\section*{Appendix}
\renewcommand{\theequation}{A.\arabic{equation}}
\renewcommand{\thesection}{A}

In this section we will present the relations between the quantities $b_{ijk}$ in structure equations \eqref{eq:str-eqns-cayley} when $a_7 = 2a_6$ in the branching \eqref{eq:branching-c-StrEq} as described below
\begin{equation}
  \label{eq:relations-among-invs}
  \begin{gathered}
b_{104} = -a_4,\ \  b_{105} = -a_2, \ \ b_{113} = b_{223},\ \  b_{114} = -2a_3,\ \  b_{115} = a_1, \ \ b_{124} = -a_1,\ \  b_{125} = 0,\\  
b_{134} = 0,\ \ b_{135} = 0,\  \  b_{145} = 0,\ \   b_{203} = b_{102},\ \  b_{204} = -a_6, b_{205} = 0,\ \  b_{215} = -a_3+a_2, \\ 
 b_{224} = -2a_3,\ \  b_{225} = a_1,\ \  b_{235}=0,\ \  b_{245} = 0,\ \ b_{302}= -b_{402}+b_{201},\ \  b_{303} = -b_{403}+b_{101},\\
  b_{304} = 0,\ \  b_{305} = -5a_6,\ \  b_{312} = b_{202}-b_{101},\ \  b_{313}=b_{102},\ \  b_{314} = a_6,\ \  b_{315} = 0,\\ 
 b_{323} = b_{103},\ \  b_{324} = -a_4,\ \  b_{325} = -a_2,\ \  b_{334} = a_2,\ \  b_{335} = 0,\ \ b_{345}=0,\ \   b_{404} = 0,\\
  b_{405} = 3a_6,\ \  b_{413} = \textstyle{\frac{1}{2}}(b_{212}-b_{102}),\ \  b_{414} = \textstyle{-\frac{3}{2}}a_6,\ \ b_{415}=-\textstyle{\frac{1}{2}}a_5,\\
  b_{423} = -b_{103}-b_{112},\ \  b_{424} = a_5+a_4,\ \  b_{425} = -a_3+2a_2,\ \  b_{434} = -a_3-a_2, b_{435} = a_1,\ \   b_{445} = 0.
\end{gathered}
\end{equation}
For the branch $a_7 = \textstyle{\frac{3}{2}}a_6$,  in \eqref{eq:branching-c-StrEq}, the quantities that are  different from the ones in \eqref{eq:relations-among-invs} are
\[b_{305}=-\textstyle{\frac{7}{2}a_6}, \ \ b_{314}=\textstyle{\frac{1}{2}a_6},\ \ b_{405}=-\textstyle{2a_6},\ \ b_{414}=-a_6\]
Additionally, the expression of the $\mathfrak{sl}_4(\RR)$-valued Cartan connection  \eqref{eq:path-geom-cartan-conn-3D}, mentioned in Section \ref{sec:cartan3d}, satisfying \eqref{eq:sl4-conn-horiz-forms}, is as follows when $a_7 = 2a_6$ in the branching \eqref{eq:branching-c-StrEq}.
\begin{equation}
\label{eq:sl4-connection-forms}
\begin{aligned}
   \psi_0 &=  -(a_4 + a_5) \omega^0 + \phi_0,\\[3 pt]
  \psi_1 &= -(a_4 + a_5) \omega^0+(a_2 - a_3) \omega^1 + \phi_0 + \phi_1,\\[3 pt]
 \psi_2 &= - a_5 \omega^0+a_1 \omega^2  - 2 \phi_1,\\[3 pt]
\gamma_1&=\textstyle{\frac 12 a_6 \omega^0-(a_4+a_5)\omega^1+(-\frac{3}{4}a_2+\frac{1}{2}a_3) \omega^2},\\[3 pt]
 \gamma_2 &= \textstyle{2 a_6 \omega^1-a_4 \omega^2+a_2 \omega^3,}\\[3 pt]
\mu_3&=\textstyle{(- \frac 34 b_{101}-\frac{3}{4}b_{403}+\frac 14  b_{412}+\frac 14 a_{3;2}-\frac 14 a_{4;3}+\frac 12 a_1 a_2) \omega^0-b_{212} \omega^1}\\
     &+ \textstyle{(\frac{5}{4}b_{112}-\frac 14 a_{1;3}+\frac 34 b_{103}) \omega^2+b_{223} \omega^3-2 a_3 \theta^1+a_1 \theta^2},\\[3 pt]
\mu_2&=\textstyle{(\frac 32 a_3 a_4+\frac 32 a_5 a_3-\frac 14 a_1 a_6-\frac 94 a_4 a_2-\frac{45}{16} a_2 a_5-\frac{5}{16}b_{301}-\frac 78 b_{401}-\frac{7}{8} a_{4;1}-a_{5;1}) \omega^0}\\[3 pt]
&+\textstyle{b_{201} \omega^1+\frac 14(2 a_1 a_2+a_{3;2}-3 b_{101}+4b_{202}-3 b_{403}+b_{412}- a_{4;3}) \omega^2+b_{102} \omega^3-a_6 \theta^1},\\[3 pt]
\mu_1&=\textstyle{(-\frac 12 a_2a_3 +\frac 34 a_2^2-\frac 12 a_{6;3})\omega^0+\frac 14(2 a_1a_2- b_{101}+ 3a_{3;2} +  a_{4;3}- b_{403}+ 3 b_{412})\omega^1}\\[3 pt]
&+\textstyle{\frac 14(3b_{102}+ b_{212}- a_{2;3})\omega^2+\frac 14(b_{103}-b_{112}+a_{1;3})\omega^3+(-a_4-a_5)\theta^1+(-\frac 34 a_2+\frac 12 a_3)\theta^2}\\[3 pt]
\mu_0&=\textstyle{(4a_2a_6-\frac 52 a_3a_6+4a_4a_5+2a_4^2+2a_5^2-\frac 12 a_{6;1})\omega^0}\\[3 pt]
&+\textstyle{(-\frac 34a_1a_6-\frac 34 a_4a_2-\frac{15}{16}a_2a_5+\frac 12a_3a_4+\frac 12 a_5a_3+\frac{9}{16}b_{301}+\frac{3}{8}b_{401}+\frac 38a_{4;1})\omega^1}\\[3 pt]
&+\textstyle{\frac 14(-2a_2a_3+ 3a_2^2+b_{201}- 3b_{402}- a_{6;3}+ a_{4;2}+ a_{5;2})\omega^2}\\[3 pt]
&+\textstyle{\frac 14(-2 a_1a_2+ 3 b_{101}- b_{403}- b_{412}- a_{3;2}+ a_{4;3})\omega^3-\frac 52 a_6\theta^2}\\[3 pt]
\end{aligned}
\end{equation}
For the branch  $a_7 = \textstyle{\frac{3}{2}}a_6$  let us denote the  entries of  the $\mathfrak{sl}_4(\RR)$-valued Cartan connection \eqref{eq:path-geom-cartan-conn-3D} not included in \eqref{eq:sl4-conn-horiz-forms} by $\tilde\psi_0,\tilde\psi_1,\tilde\psi_2,\tilde\gamma_1,\tilde\gamma_2,\tilde\mu_3,\tilde\mu_2,\tilde\mu_1,\tilde\mu_0.$ Using the expressions given in \eqref{eq:sl4-connection-forms} one obtains
\begin{equation}
  \label{eq:sl4-conn-forms-c32}
  \begin{gathered}
    \tilde\psi_0=\psi_0,\qquad\tilde\psi_1=\psi_1,\qquad \tilde\psi_2=\psi_2,\qquad
\tilde\gamma_1=\gamma_1-\textstyle{\frac 18}a_6\omega^0,\qquad \tilde\gamma_2=\gamma_2-\textstyle{\frac 12}a_6\omega^1,\\
\tilde\mu_3=\mu_3,\qquad \tilde\mu_2=\mu_2+\textstyle{\frac{1}{16}}(a_1a_6-3a_2a_5-3b_{301}-6b_{401})\omega^0,\qquad \tilde\mu_1=\mu_1-\textstyle{\frac 18}a_{6;3}\omega^0,\\
\tilde\mu_0=\mu_0 +\textstyle{\frac 18(-20a_6 a_2+15a_3 a_6-5 a_{6;1}) \omega^0+\frac{1}{16}(3a_1 a_6- a_2 a_5- b_{301}-2b_{401}) \omega^1-\frac 18a_{6;3} \omega^2 +\frac{11}{8}a_6 \theta^2}
  \end{gathered}
\end{equation}
\newpage
\bibliographystyle{alpha}   
\bibliography{MergedFileBibliography}
\end{document}